\def \C {{\Bbb C\,}}
\def\R {{\Bbb R}}
\def\S{{\Bbb S}}
\def \Z {{\Bbb Z}}
\def\Im{\mbox{\rm\,Im\,}}
\def\Re{\mbox{\rm\,Re\,}}
\def\Res{\mbox{\rm\,Res\,}}
\def\div{\mbox{\rm\,div\,}}
\def\wtu{\widetilde{u}}
\def\whM{\widehat{M}}
\def\whc{\widehat{c}}
\def\whf{\widehat{f}}
\def\whu{\widehat{u}}
\def\whp{\widehat{p}}
\def\wht{\widehat{t}}
\def\whU{\widehat{U}}
\def\whphi{\widehat{\phi}}
\def\whOmega{\widehat{\Omega}}
\newtheorem{theorem}{Theorem}
\newtheorem{proposition}{Proposition}
\newtheorem{definition}{Definition}
\newtheorem{lemma}{Lemma}
\newtheorem{remark}{Remark}
\def\cqfd{\hfill$\Box$}
\title{Helicoidal minimal surfaces of prescribed genus, II}
\subjclass[2000]{Primary: 53A10; Secondary: 49Q05, 53C42}
\author{David Hoffman}
\address{Department of Mathematics\\ Stanford University\\ Stanford, CA 94305}
\email{hoffman@math.stanford.edu}
\author{Martin Traizet}
\address{Laboratoire de Math\'{e}matiques et Physique Th\'{e}orique,Universit\'{e} Fran\c cois Rabelais, 37200 Tours, France}
\email{Martin.Traizet@lmpt.univ-tours.fr}
\author{Brian White}
\address{Department of Mathematics\\ Stanford University\\ Stanford, CA 94305}
\thanks{The research of the third author was supported by NSF
   grant~DMS--1105330}
\email{white@math.stanford.edu}
\date{April 22, 2013.}
\begin{document}
\maketitle

\bigskip

{\em Abstract: In this paper we prove that for each positive integer $g$, there exists a complete
minimal surface of genus $g$ that is properly embedded in three-dimensional euclidean space
and that is asymptotic to the helicoid.}

\bigskip

\section{Introduction}
In this paper we prove
\begin{theorem}
\label{theorem1}
For each positive integer $g$, there exists a complete minimal surface of genus $g$ that is properly embedded in $\R^3$ and asymptotic to the helicoid.
\end{theorem}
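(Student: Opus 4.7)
The plan is to realize the genus-$g$ helicoidal surface as a subsequential limit of embedded minimal surfaces $M_n$ of genus $g$ sitting in expanding solid cylinders $Z_n=\{x^2+y^2\le R_n^2\}$, $R_n\to\infty$, each having boundary on $\partial Z_n$ that closely approximates the intersection of a fixed helicoid $H$ with $\partial Z_n$. Away from the central axis, $M_n$ should look nearly helicoidal, while near the axis it carries $g$ handles adapted to the two characteristic symmetries of $H$: the screw motion $\sigma_t\colon(x,y,z)\mapsto(R_t(x,y),z+t)$ and the $180^\circ$ rotation $\rho$ about the $x$-axis. Taking $M_n$ invariant under $\rho$ and under $\sigma_\pi$ passes the problem to an orbifold quotient in which the construction reduces to a Plateau-type problem on a compact $2$-manifold with boundary. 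I would produce the $M_n$ either via the constructions in Part~I of this series (minimal surfaces of prescribed genus in $\S^2(R)\times\R$, followed by unrolling/lifting) or by a min-max or variational argument in the quotient, with the genus prescribed as topological data.

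With $\{M_n\}$ in hand, the main analytic tool is the Colding--Minicozzi compactness theory for embedded minimal surfaces of bounded genus: after passing to a subsequence, $M_n$ converges smoothly on compact subsets of $\R^3\setminus\mathcal{S}$ to a minimal lamination $\mathcal{L}$, where the singular set $\mathcal{S}$ (if nonempty) is a Lipschitz curve along which curvature blows up. Three degenerations must be ruled out: $(i)$~$\mathcal{L}$ cannot be a foliation by flat planes, because the boundary data force $M_n$ to contain a piece graphical over $H$ outside a fixed cylinder with area of order $R_n$; $(ii)$~no handle may escape to infinity along the helicoidal end, which is excluded by a barrier argument combined with the $\rho$-invariance that anchors handles to the $x$-axis; $(iii)$~no handle may collapse, which is excluded by a catenoid-neck / removable-singularity analysis together with a uniform area lower bound coming from the fixed topology.

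Once these degenerations are excluded, the smooth limit $M$ is a complete embedded minimal surface of genus at most $g$ with one helicoidal end; the end is asymptotic to $H$ with the prescribed pitch because the boundary data for $M_n$ agree with $H$ on $\partial Z_n$ and the uniqueness theory for helicoidal ends of embedded minimal annuli applies. The matching lower bound genus $\ge g$ comes from the construction: the $g$ handles lie in separate $\rho$-invariant neighborhoods fixed a priori and cannot merge in the limit. The main obstacle is precisely the non-degeneration step $(ii)$--$(iii)$: establishing uniform curvature estimates near the axis so that the $g$ handles neither pinch off into vanishing catenoidal necks nor escape along the helicoidal end. This is where the $\langle\rho,\sigma_\pi\rangle$-equivariance is essential, since it anchors the handles to the $x$-axis and reduces the multi-handle non-collapse problem to curvature control at finitely many $\rho$-fixed points on the axis, where the ambient symmetry plus the Colding--Minicozzi no-mixing theorem can be leveraged.
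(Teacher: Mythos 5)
Your overall setup (construct genus-$g$ helicoidal surfaces with the symmetries of the helicoid in a compactified ambient space, then pass to a limit as the scale diverges) is indeed the strategy of this series: Part~I constructs the surfaces $M_\pm(r)\subset\S^2(r)\times\R$ and already proves subsequential convergence to a complete properly embedded surface asymptotic to the helicoid with genus \emph{at most} $g$, together with parity information. But the entire content of the present paper is the step you dispatch in a sentence: showing that the genus does not drop in the limit. Your claims that collapse is ``excluded by a catenoid-neck / removable-singularity analysis together with a uniform area lower bound coming from the fixed topology'' and that ``the $g$ handles lie in separate $\rho$-invariant neighborhoods fixed a priori and cannot merge'' are unsupported, and the second is simply not available: there is no a priori confinement of the handles. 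The surfaces meet the $Y$-axis at $4g+2$ points, and the points where catenoidal necks form can drift toward the axis $Z$, toward the antipodal axis $Z^*$, or cluster together, with neck radii $r_{j,n}$ collapsing at a rate on the order of $t_n/|\log t_n|$, far smaller than the pitch. Fixed topology gives no area or curvature bound preventing such necks from pinching off; genus loss in limits of embedded minimal surfaces is a genuine phenomenon and is exactly what must be excluded here.

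What the paper actually does to close this gap is a force-balancing argument, not a compactness argument. It sets $N=g-g'$ (the number of lost handles), tracks the $4N$ points of $M_n\cap Y$ escaping the axes, shows catenoidal necks form there, writes the surface away from the necks and axes as a graph $f_n=\frac{t_n}{2\pi}\arg z-u_n$ over the universal cover of $\C^*$, and proves via explicit barriers and a height estimate that $\frac{|\log t_n|}{t_n}u_n$ converges to a harmonic function with logarithmic singularities and strictly positive coefficients $c_i$. It then computes the flux of the horizontal Killing field $\chi_Y$ around each neck: this flux vanishes by $\rho_Y$-symmetry, yet its asymptotic expansion is a strictly positive ``gravitational'' expression unless $N\le 1$ with the single neck on the equator (the three cases $p_1\in(0,i)$, $p_1=0$, $p_1=i$ must each be treated, the last requiring Alexandrov reflection). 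Even then one handle could still be lost; the final step invokes the parity of the limit genus from Theorem~2, which depends on the extra symmetry $\mu_E$ of the even-genus surfaces in $\S^2\times\R$, to force $N=0$. None of this machinery appears in your proposal, so the argument has a genuine gap at its central step.
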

Let $\S^2(r)$ be the round sphere of radius $r$.
Helicoidal minimal surfaces in $\S^2(r)\times\R$ of prescribed genus have been constructed by the authors in \cite{partI}. We obtain helicoidal minimal surfaces in $\R^3$ of prescribed genus by letting the radius $r$ go to infinity.

\medskip

Our model for $\S^2(r)$ is $\C\cup\{\infty\}$ with the conformal metric obtained by stereographic projection:
\begin{equation}
\label{equation-metric-S2}
\lambda^2 |dz|^2 \quad\mbox{ with }
\lambda=\frac{2 r^2}{r^2+|z|^2},
\end{equation}
In this model, the equator is the circle $|z|=r$.
Our model for $\S^2(r)\times\R$ is $(\C\cup\{\infty\})\times\R$ with the metric
\begin{equation}
\label{equation-metric-S2xR}
\lambda^2 |dz|^2+dt^2,\quad (z,t)\in(\C\cup\{\infty\})\times\R.
\end{equation}
When $r\to\infty$, this metric converges to the euclidean metric 
$4|dz|^2+dt^2$ on $\C\times\R=\R^3$. (This metric is isometric to
the standard euclidean metric by the map $(z,t)\mapsto (2z,t)$.)

\medskip

Let $H$ be the standard helicoid in $\R^3$, defined by the equation
$$x_2\cos x_3=x_1\sin x_3.$$
It turns out that $H$ is minimal for the metric \eqref{equation-metric-S2xR} for any value of $r$, although not complete anymore (see Section 2 in \cite{partI}).
We complete it by adding the vertical line $\{\infty\}\times\R$, and still denote it $H$.
This is a complete, genus zero, minimal surface in $\S^2\times\R$.

\medskip

The helicoidal minimal surfaces in $\S^2\times\R$ constructed by the authors in \cite{partI}
have any prescribed genus. In this paper we only consider those of even genus, which have
one additional symmetry (denoted $\mu_E$ below).
Fix some positive integer $g$. For any radius $r>0$, there exist, by Theorem 1 in
\cite{partI},
two distinct helicoidal minimal surfaces of even genus $2g$ in $\S^2(r)\times\R$, which we denote
$M_+(r)$ and $M_-(r)$. Each one has two ends (corresponding to the two ends of $\S^2(r) \times\R$), each asymptotic
to $H$ or to a vertical translate of $H$.

\medskip

\begin{figure}
\label{figure1}
\begin{center}
\includegraphics[height=60mm]{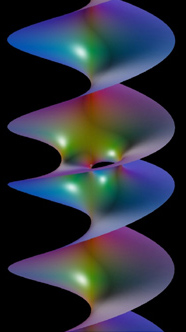}
\hspace{2cm}
\includegraphics[height=60mm]{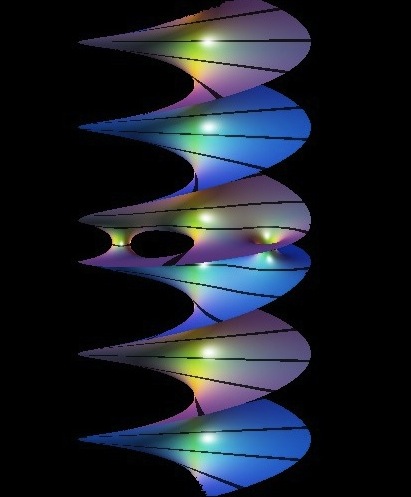}
\end{center}
\caption{
{\bf Left:} A genus-one helicoid, computed by David Hoffman, Hermann Karcher and Fusheng Wei.
{\bf Right:}  A  genus-two helicoid, computed by  Martin Traizet.
Both surfaces were computed numerically using the Weierstrass representation
and the images were made with Jim Hoffman using
visualization software he helped  to develop.
}
\end{figure}

Regarding the limit as $r\to\infty$, the following result was already proved in
\cite{partI}, Theorem 2.
\begin{theorem}
\label{theorem2}
Let $s\in\{+,-\}$. Let $R_n$ be a sequence of radii diverging to infinity.
A subsequence of the genus-$2g$ surfaces $M_s(R_n)$ converges to a minimal surface $M_s$ in $\R^3$
asymptotic to the helicoid $H$. The convergence is smooth convergence on compact
sets.
Moreover,
\begin{itemize}
\item the genus of $M_s$ is at most $g$,
\item the genus of $M_+$ is even,
\item the genus of $M_-$ is odd,
\item the number of points in $M_s\cap Y$ is $2\,\text{\rm genus}(M_s)+1$.
\end{itemize}
\end{theorem}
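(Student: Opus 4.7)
The plan is to establish the four assertions in sequence: subsequential smooth convergence on compacta, identification of the limit as a properly embedded minimal surface asymptotic to $H$, the genus bound with correct parity, and the intersection count with $Y$.

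First I would establish uniform local area and curvature bounds for $M_s(R_n)$. The area bound follows from the asymptotic control: outside a fixed-radius cylinder around the vertical axis, each $M_s(R_n)$ is a small graph over $H$, and the monotonicity formula then gives uniformly bounded area on compact subsets of $\R^3$. To rule out curvature blowup, I would rescale at a hypothetical blowup point; the rescaled limit would be a nonflat complete embedded minimal surface compatible with the ambient helicoidal structure inherited from $M_s(R_n)$, which Colding--Minicozzi-type arguments would forbid. Classical compactness then produces a subsequential smooth limit $M_s$ on compact sets, and since the graphical control over $H$ passes to the limit, $M_s$ is properly embedded and asymptotic to $H$.

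The main obstacle is the genus bound with parity. Each $M_s(R_n)$ has genus $2g$, but handles concentrated near the receding equator $\{|z|=R_n\}$ can escape to spatial infinity as $R_n\to\infty$, so the limit genus may be strictly smaller. The symmetry $\mu_E$, present only on the even-genus surfaces, acts on the handles of $M_s(R_n)$: each handle either lies in a $\mu_E$-conjugate pair or is $\mu_E$-invariant. By equivariance of the limiting procedure, $\mu_E$-conjugate pairs survive together or escape together, while $\mu_E$-invariant handles do so individually. The two constructions $M_+(R_n)$ and $M_-(R_n)$ are distinguished by the type of the $\mu_E$-invariant component of the handle configuration, and this distinction propagates to the limit to force $\text{genus}(M_+)$ to be even and $\text{genus}(M_-)$ to be odd. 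Counting paired handles shows that at most $g$ can survive, so $\text{genus}(M_s)\le g$.

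Finally, I would obtain $|M_s\cap Y|=2\,\text{genus}(M_s)+1$ by a topological count. The horizontal axis $Y$ is a line of fixed points of a rotational symmetry of $M_s$, meeting $M_s$ transversely. For a helicoidal embedded minimal surface of genus $k$ with the prescribed symmetries, a Morse-theoretic argument (or equivalently an Euler-characteristic computation on the quotient by the screw-motion symmetry, applied relative to the helicoidal ends) yields $2k+1$ intersection points: one from the central helicoidal crossing and two contributed by each handle.
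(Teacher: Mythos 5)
This theorem is not proved in the present paper at all: it is quoted verbatim from Theorem~2 of the companion paper \cite{partI}, so there is no internal proof to compare against. Judged on its own terms, your sketch has the right overall shape (compactness from local area and curvature bounds, then a handle count, then a fixed-point count for $\rho_Y$), but two of the three main steps have genuine gaps.

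The most serious problem is the genus bound and parity. Your mechanism ``$\mu_E$-conjugate pairs survive together or escape together'' is wrong for the limit under discussion. The limit $M_s$ is extracted near the pole $O$, while $\mu_E$ exchanges $O$ with the antipodal pole $O^*$; as $R_n\to\infty$ the point $O^*$ recedes to infinity in $\R^3$, so the $\mu_E$-image of any surviving handle \emph{always} escapes. Taken literally, your mechanism would force $\text{genus}(M_s)$ to be even in all cases, contradicting the assertion for $M_-$. The correct use of $\mu_E$ is only this: the handle configuration near $O^*$ is congruent to the one near $O$, so if $g'$ handles survive near $O$ then $g'$ more sit near $O^*$ and $2g'\le 2g$, giving $g'\le g$. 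More importantly, you never identify the invariant that distinguishes $M_+(r)$ from $M_-(r)$ and that is stable under the limit; ``the type of the $\mu_E$-invariant component of the handle configuration'' is a placeholder, not an argument. The parity of $\text{genus}(M_s)$ is a local statement at $O$, and $\mu_E$ alone cannot see it; in \cite{partI} the parity comes from a concrete discrete property of the two constructed families, and that is the content you would have to supply.

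Two further points. First, your compactness step assumes that ``outside a fixed-radius cylinder around the vertical axis, each $M_s(R_n)$ is a small graph over $H$''; this uniform graphical structure (away from the axes \emph{and} the catenoidal necks) is itself one of the main technical results of \cite{partI} (it appears here as Proposition~\ref{Setup}) and cannot be taken as given. Second, your count $|M_s\cap Y|=2\,\text{genus}(M_s)+1$ is in the right spirit, but to run the Lefschetz/Euler-characteristic argument you need that $(\rho_Y)_*$ acts by $-1$ on $H_1(M_s,\Z)$ and that all fixed points of $\rho_Y$ on $M_s$ lie on $Y$ and are transverse; the first of these is inherited from Proposition~\ref{proposition*1}(4) for the approximating surfaces and must be shown to pass to the limit, which you do not address.
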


\medskip
The main result of this paper is the following theorem, from which Theorem \ref{theorem1}
follows.
\begin{theorem}
\label{theorem3}
If $g$ is even, then $M_+$ has genus $g$. If $g$ is odd, then $M_-$ has genus $g$.
\end{theorem}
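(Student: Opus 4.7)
My plan is to prove by contradiction that $\text{genus}(M_s) \ge g$, since combined with Theorem \ref{theorem2} this forces equality with the correct parity. Assume $\text{genus}(M_s) < g$. The main invariant is the axis intersection count: the analogue of Theorem \ref{theorem2} in $\S^2(R_n)\times\R$ (Theorem~1 of \cite{partI}) gives $|M_s(R_n)\cap Y|=4g+1$, while $|M_s\cap Y|=2\,\text{genus}(M_s)+1$. Because convergence is smooth on compact sets, any ``lost'' axis intersection point must escape to $\pm\infty$ along $Y$. Using the $\mu_E$ symmetry to match upper and lower escape, we obtain
$$ 4g+1 \;=\; \bigl(2\,\text{genus}(M_s)+1\bigr) + 2k,$$
where $k$ counts axis points escaping upward; our assumption yields $k>g$.

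Next I would perform a concentration-compactness analysis in the vertical direction. At each maximal cluster of escaping points, at heights $t_n\to+\infty$, translate $M_s(R_n)$ by $(0,-t_n)$ and extract a subsequential smooth limit. Since $\lambda^2|dz|^2+dt^2$ converges to the Euclidean metric on compact subsets of $\C\times\R$ as $R_n\to\infty$, and the $M_s(R_n)$ are properly embedded minimal surfaces of bounded genus, standard compactness theory (following Colding--Minicozzi) yields a properly embedded minimal surface $N$ in $\R^3$. Matching with the unescaped portion of $M_s(R_n)$ below height $t_n$ shows that $N$ is asymptotic to $H$ at its lower end, while its upper end inherits the helicoidal asymptotics of $M_s(R_n)$. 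Each such cluster contributes $|N\cap Y|=2\,\text{genus}(N)+1$ points to $k$, so if there are $\ell$ upper escape clusters with genera $g_1,\dots,g_\ell$, then $k=2\sum g_i+\ell$.

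The crux of the proof, and the main obstacle, is ruling out nontrivial escape: one must show that every escape limit $N$ is the helicoid itself (so $\text{genus}(N)=0$) and that the number $\ell$ of clusters per side is at most $g$. The most plausible route uses the gluing construction of \cite{partI}, which concentrates the genus of $M_s(R_n)$ near the equator $t=0$; the extraction of a nontrivial escape limit at height $t_n\to\infty$ would violate the period/flux conditions that forced embeddedness of $M_s(R_n)$ in the first place. Uniqueness of the helicoid (Meeks--Rosenberg) is a likely auxiliary tool for excluding genus-zero escape limits other than vertical translates of $H$. Once these escape exclusions are in hand, the inequality $k\le g$ follows, contradicting $k>g$ and completing the proof. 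The delicate point that will require the most care is to combine the quantitative curvature/flux control from the gluing in \cite{partI} with the vertical concentration-compactness in a way that propagates all the way out to heights $t_n\to\infty$, since the ambient metric is not translation-invariant in $t$ at finite $r$ before the Euclidean limit is taken.
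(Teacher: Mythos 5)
Your approach contains a fundamental misreading of the geometry that derails the whole argument. The geodesic $Y$ is a \emph{horizontal} great circle in $\S^2(R_n)\times\{0\}$ (the imaginary axis plus $\infty$ in the stereographic model), so the axis points that are lost in the limit do not escape to vertical infinity: they all lie at height $0$ and drift \emph{horizontally} along the $Y$-circle, away from $O$ and toward the antipodal point $O^*$ or to intermediate positions on that circle. Your entire program of translating by $(0,-t_n)$ with $t_n\to+\infty$ and extracting vertical escape limits $N$ asymptotic to $H$ is therefore aimed in the wrong direction and cannot capture the lost handles. Your counting is also off: Proposition \ref{proposition*1} gives $|M_s(R_n)\cap Y|=4g+2$ (not $4g+1$) for the genus-$2g$ surface, and the $\mu_E$-symmetry forces $2g'+1$ points to remain near $O$ \emph{and} $2g'+1$ near $O^*$, so the correct ledger is $4g+2=2(2g'+1)+4N$ with $N=g-g'$ escaping quadruples, not your equation $4g+1=(2\,\mathrm{genus}+1)+2k$. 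Note too that your scheme, if it worked, would give $\mathrm{genus}(M_s)=g$ for both $s=+$ and $s=-$ simultaneously, contradicting the parity statements of Theorem \ref{theorem2}; the parity is not decoration but the final step that converts the real estimate ($N\le 1$) into $N=0$.

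The actual mechanism, which your proposal does not contain even in outline, is a force-balancing argument on the horizontal scale: after rescaling by $1/R_n$ one finds catenoidal necks forming at the escaping points $p_{j,n}$ on the $Y$-circle; the flux of the horizontal Killing field $\chi_Y$ around each neck vanishes by $\rho_Y$-symmetry, while an explicit asymptotic computation of that flux (via the graph function $u_n$, its renormalized harmonic limit with logarithmic singularities, and a residue calculation) shows it is strictly positive at the innermost neck unless $N=1$ and the single neck sits on the equator. Your appeals to ``period/flux conditions from the gluing'' and to Meeks--Rosenberg are not substitutes for this computation; in particular nothing in your sketch rules out two or more necks clustering at distinct points of the $Y$-circle, which is exactly the hard case the paper's Cases 1--3 are designed to handle.
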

\subsection{Pictures}
A genus-2 helicoid was computed numerically by the second author in 1993
while he was a postdoc in Amherst (see Figure \ref{figure1}, right).
Helicoids of genus up to six have been computed by  Markus Schmies \cite{schmies} using the
theoretical techniques developed by Alexander Bobenko \cite{bobenko}.
These surfaces were computed using the Weierstrass Representation and
the Period Problem was solved numerically.
There is of course no evidence that these numerically computed examples
are the same as the ones obtained in Theorem \ref{theorem1}, but they
share the same symmetries.
Pictures of these numerical examples suggest that
an even genus helicoid
looks  like a helicoid with an even number of handles far from the axis,
and  an odd genus helicoid  looks
like  a genus-one helicoid with an even number of handles far from the axis,
the spacing between the handles getting larger and larger as the genus increases.
\section{Preliminaries}
\subsection{Symmetries}
Let us recall some notation from \cite{partI}.
The real and imaginary axes in $\C$ are denoted $X$ and $Y$.
The circle $|z|=r$ is denoted $E$ (the letter $E$ stands for ``equator").
Note that $X$, $Y$ and $E$ are geodesics for the metric \eqref{equation-metric-S2}.
We identify $\S^2$ with $\S^2\times\{0\}$, so $X$, $Y$ and
$E$ are horizontal geodesics in $\S^2\times\R$.
The antipodal points $(0,0)$ and $(\infty,0)$ are denoted $O$ and $O^*$ respectively.
The vertical axes through $O$ and $O^*$ in $\S^2\times\R$ are denoted $Z$ and
$Z^*$, respectively.
If $\gamma$ is a horizontal or vertical geodesic in $\S^2\times\R$, the $180^{\circ}$
rotation around $\gamma$ is denoted $\rho_{\gamma}$. This is an isometry of
$\S^2\times\R$.
The reflection in the vertical cylinder $E\times\R$ is denoted $\mu_E$. This is an
isometry of $\S^2\times\R$. In our model,
$$\mu_E(z,t)=\left(\frac{r^2}{\overline{z}},t\right).$$

\medskip

The helicoid $H$ in $\S^2\times\R$ contains the geodesic $X$, the axes $Z$ and $Z^*$ and
meets the geodesic $Y$ orthogonally at the points $O$ and $O^*$.
 It is invariant by
$\rho_X$, $\rho_Z$, $\rho_{Z^*}$, $\mu_E$ (which reverse its orientation) and
$\rho_Y$ (which preserves it).

\medskip

The genus-$2g$ minimal surfaces $M_+(r)$ and $M_-(r)$ in $\S^2(r)\times\R$
have the following properties (see  Theorem 1 of \cite{partI}):
\begin{proposition}
\label{proposition*1}
Let $s\in\{+,-\}$. Then:
\begin{enumerate}
\item $M_s(r)$ is complete, properly embedded and has a top end and a bottom end, each asymptotic to $H$ or a vertical translate of $H$,
\item $M_s(r)\cap H=X\cup Z\cup Z^*$. In particular, $M_s(r)$ is invariant by $\rho_X$,
$\rho_Z$ and $\rho_{Z^*}$, each of which reverses its orientation.
\item $M_s(r)$ is invariant by the reflection $\mu_E$, which reverses its orientation,
\item $M_s(r)$ meets the
geodesic $Y$
orthogonally at $4g+2$ points and is invariant under $\rho_Y$, which preserves its orientation. Moreover, $(\rho_Y)_*$ acts on $H_1(M_s(r),\Z)$ by multiplication by $-1$.
\end{enumerate}
\end{proposition}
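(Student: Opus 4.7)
The statement at fixed $r$ is Theorem~1 of \cite{partI}; the plan here is to outline how that construction delivers each listed property, so that the present paper can invoke them as a black box.

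Let $G$ be the group of isometries of $\S^2(r)\times\R$ generated by $\rho_X$, $\rho_Y$, $\rho_Z$, $\rho_{Z^*}$ and $\mu_E$. The strategy is to produce $M_s(r)$ within the class of $G$-invariant properly embedded minimal surfaces having two ends, each asymptotic to (a vertical translate of) $H$. Since $X$, $Z$, $Z^*$ and $Y$ lie in the fixed loci of $\rho_X$, $\rho_Z$, $\rho_{Z^*}$ and $\rho_Y$ respectively, while $E\times\R$ is the fixed locus of $\mu_E$, any minimal surface that contains $X\cup Z\cup Z^*$ and meets $Y$ orthogonally is automatically invariant under the corresponding isometries by Schwarz reflection. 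Thus the orthogonality/containment statements in (2), (3) and (4) essentially \emph{are} the ansatz under which the construction is carried out; they do not require a separate proof, only the existence of a nonempty solution in this symmetric class.

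A $G$-invariant surface is determined by its intersection with a fundamental domain of $G$, a bounded wedge in $\S^2(r)\times\R$ whose boundary is made up of arcs of $X$, $Y$, $Z$, $Z^*$ and $E\times\R$. I would produce this intersection, following \cite{partI}, as a minimal disk with Plateau/free-boundary-type conditions on the pieces of the wedge boundary, obtained by a min-max (or equivalently variational) construction within the symmetry class; the construction yields two distinct critical configurations, which get labeled $M_+(r)$ and $M_-(r)$. The total genus of each extended surface is $2g$ by design. The count of $4g+2$ points in $M_s(r)\cap Y$ then follows from Riemann--Hurwitz applied to the orientation-preserving involution $\rho_Y$ acting on the genus-$2g$ surface $M_s(r)$, once the quotient genus has been computed from the symmetry data in the fundamental domain.

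The remaining and genuinely delicate content is property (1), together with the exact equality $M_s(r)\cap H=X\cup Z\cup Z^*$ asserted in (2). Completeness and proper embeddedness with exactly two helicoidal ends are enforced by prescribing, inside the ansatz, the correct winding of the two ends about $Z$ and $Z^*$ (matching the helicoid), and then verifying a posteriori via curvature estimates that the limit surface has no extra components. The inclusion $X\cup Z\cup Z^*\subset M_s(r)\cap H$ is built in, but the reverse inclusion is subtle: any additional component of the intersection would give two distinct smooth minimal surfaces sharing a curve, and a maximum-principle argument along that curve, combined with the $G$-symmetry, would force them to coincide globally, a contradiction with the prescribed topology. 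The main obstacle in a self-contained proof would therefore be simultaneously establishing embeddedness, controlling the two ends, and excluding extraneous intersections with $H$; this is precisely where the heavy lifting of \cite{partI} takes place, and the present paper takes the output of that work as given.
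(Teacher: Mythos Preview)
The paper does not prove this proposition at all: it is stated immediately after the sentence ``(see Theorem~1 of \cite{partI})'' and no argument is given. Your proposal correctly identifies the result as Theorem~1 of \cite{partI}, so on the level of what the present paper actually does, you are in agreement---the proposition is imported wholesale as a black box.

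Your additional sketch of how \cite{partI} proceeds is supplementary material that the paper does not attempt, so there is nothing in the paper to compare it against. One small caution on that sketch: your argument for the reverse inclusion $M_s(r)\cap H\subset X\cup Z\cup Z^*$ is not quite right as stated. Two distinct embedded minimal surfaces can perfectly well intersect transversally along a curve without the maximum principle forcing them to coincide; the maximum principle (or unique continuation) only bites when the surfaces are tangent along the curve. Ruling out extra components of $M_s(r)\cap H$ typically comes instead from the way the surface is built (e.g.\ as a graph over a piece of $H$ away from the axes, as in Proposition~\ref{Setup}), not from an abstract intersection argument.
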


\subsection{Setup}
\label{section-setup}
Let $R_n$ be a diverging sequence of radii.
By Theorem \ref{theorem2}, a subsequence of $M_s(R_n)$ (still denoted the same) converges to a helicoidal minimal surface $M_s$ that is helicoidal at infinity.
Let $g'$ be the
genus of $M_s$.
By the last point of Theorem \ref{theorem2},
$M_s\cap Y$ has exactly $2g'+1$ points.
It follows that
$2g'+1$ points of $M_s(R_n)\cap Y$ stay at bounded distance from the origin $O$.
By $\mu_E$-symmetry, $2g'+1$ points of
$M_s(R_n)\cap Y$ stay at bounded distance from the antipodal point $O^*$.
 There remains $4(g-g')$ points in $M_s(R_n)\cap Y$ whose distance to $O$ and $O^*$ is 
 unbounded. Let
 $$N=g-g'.$$
 We shall prove
 \begin{theorem}
 \label{theorem4}
 \label{maintheorem}
 In the above setup, $N\leq 1$.
 \end{theorem}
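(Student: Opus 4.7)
The plan is to argue by contradiction: assume $N\ge 2$. My strategy is to rescale $M_s(R_n)$ around one of the wandering $Y$-intersection points, extract a limit minimal surface $\widehat{M}\subset\R^3$, and show that its structure is incompatible with the existence of a second wandering scale.

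The $4N$ wandering points on $Y$ form $N$ orbits under the Klein four-group $\langle\rho_X,\mu_E\rangle$, with typical orbit $\{\pm iy_{j,n},\pm iR_n^2/y_{j,n}\}$. After passing to a subsequence and applying $\mu_E$ to swap $iy$ with $iR_n^2/y$ where needed, I arrange $y_{j,n}\to\infty$, $y_{1,n}\le\cdots\le y_{N,n}\le R_n$, and $y_{j,n}/R_n\to c_j\in[0,1]$. Let $\phi_n$ be the isometry of $\S^2(R_n)\times\R$ arising from the rotation of $\S^2(R_n)$ along $Y$ sending $iy_{1,n}$ to $O$; it commutes with $\rho_Y$. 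By the uniform curvature estimates used in the proof of Theorem~\ref{theorem2}, a subsequence of $\phi_n(M_s(R_n))$ converges smoothly on compact subsets of the limit ambient $\R^3$ to a complete, properly embedded minimal surface $\widehat{M}$. This $\widehat{M}$ contains the origin and is $\rho_Y$-invariant, and, since a direct computation shows that the translated helicoid $\phi_n(H)$ (whose axis escapes to $-i\infty$) degenerates on any bounded region to the stack of horizontal planes $\{t=\tfrac{\pi}{2}+k\pi:k\in\Z\}$, the surface $\widehat{M}$ is asymptotic at horizontal infinity to this horizontal lamination.

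Two cases then arise. If $y_{2,n}-y_{1,n}$ stays bounded along a further subsequence, then $\phi_n(iy_{2,n})$ converges to a second point of $\widehat{M}\cap Y$ distinct from the origin. Otherwise $y_{2,n}-y_{1,n}\to\infty$, and performing the analogous rescaling centered at $iy_{2,n}$ produces an independent limit surface $\widehat{M}'$ with the same asymptotic structure, living at a second scale. The contradiction then rests on a classification statement: a nonplanar, $\rho_Y$-invariant, properly embedded minimal surface in $\R^3$ asymptotic to the horizontal lamination $\{t=\tfrac{\pi}{2}+k\pi\}$ meets $Y$ in at most one point, and is essentially unique up to vertical translation. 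In the first case this immediately rules out a second point of $\widehat{M}\cap Y$; in the second case it forbids the simultaneous existence of two independent such limits. Either way $N\le 1$.

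The main obstacle I anticipate is precisely the classification in the last step. The asymptotic condition---a lamination by infinitely many parallel horizontal planes---is unusual, and carrying this out rigorously will likely require combining the lamination convergence framework of Colding--Minicozzi with flux identities along $Y$ developed in \cite{partI}. A promising shortcut is to apply the precise count ``$\#(M\cap Y)=2\,\mathrm{genus}(M)+1$'' from Theorem~\ref{theorem2} to the limit $\widehat{M}$ (or rather to an appropriate truncation of $M_s(R_n)$ containing only the wandering cluster), together with the parity constraint distinguishing $M_+$ from $M_-$, to derive a direct numerical contradiction with $N\ge 2$ without a full general classification of $\widehat{M}$.
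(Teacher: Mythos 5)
Your proposal founders at its first step: the smooth subsequential limit $\widehat M$ you want to extract does not exist as a nontrivial surface. At the scale of $\S^2(R_n)\times\R$ (no dilation, just the rotation $\phi_n$ along $Y$), the handles near the wandering points are catenoidal necks whose waist radius is $R_nr_{j,n}$, and the paper's estimates (Proposition \ref{Setup}(i) together with \eqref{equation-minoration-rin}) show $R_nr_{j,n}\ll 2\pi$, indeed $R_n r_{i,n}= O(1/\log R_n)$ for the largest neck in a cluster. So the curvature of $\phi_n(M_s(R_n))$ blows up at the origin, the "uniform curvature estimates" you invoke fail exactly at the point you rescaled to, and the limit on compact sets is just the degenerate lamination by horizontal planes (with isolated singular points where necks collapse). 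All information about the number and position of the necks lives at a sub-leading scale and is invisible to this blow-up. Relatedly, the classification statement your argument rests on ("a properly embedded, $\rho_Y$-invariant minimal surface asymptotic to the lamination $\{t=\pi/2+k\pi\}$ meets $Y$ at most once and is unique") is not available and is not plausible as stated: surfaces asymptotic to infinitely many parallel planes (Riemann-type configurations) are exactly the kind of object that admits many necks, and the counting formula $\#(M\cap Y)=2\,\mathrm{genus}(M)+1$ of Theorem \ref{theorem2} was proved only for the original limits asymptotic to the helicoid, not for limits taken at other points or scales. There is also a bookkeeping slip: each neck contributes \emph{two} points $p'_{j,n},p''_{j,n}$ to $Y^+$, so the $4N$ wandering points do not form $N$ orbits of size $4$ under $\langle\rho_X,\mu_E\rangle$; generically $\mu_E$ pairs neck $j$ with neck $N+1-j$.

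The paper's proof is designed precisely to get around the degeneracy of such geometric limits. It writes $M_n=\frac1{R_n}M_s(R_n)$ as a graph $f_n=\frac{t_n}{2\pi}\arg z-u_n$ over the universal cover of $\C^*$, proves that the renormalization $\frac{|\log t_n|}{t_n}u_n$ converges to an explicit harmonic function with logarithmic poles at the neck locations and \emph{positive} coefficients $c_i$ (the renormalized vertical fluxes, bounded below via the height estimate of Appendix \ref{appendix-height}), and then evaluates the flux of the horizontal Killing field $\chi_Y$ around each neck. That flux vanishes by $\rho_Y$-symmetry (Lemma \ref{lemma*8}), while the renormalized limit computes it as a "force" $c_1^2\frac{1-y_1^2}{1+y_1^2}+\sum_{i\ge2}c_1c_if(y_1,y_i)$ which is strictly positive whenever $N\ge2$ or the configuration is off the equator; the case analysis (Cases 1, 2, 3a, 3b) handles the possible degenerations of the limit positions $p_i$, with Alexandrov reflection closing Case 3a. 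If you want to salvage a blow-up approach, you would have to rescale at the neck scale (where you only see one catenoid at a time) \emph{and} extract the $O(t_n/|\log t_n|)$ interaction between necks — which is essentially what the paper's renormalized analysis does.
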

 Theorem \ref{theorem3} is a straightforward consequence of this theorem:
 Indeed if $g$ is even and $s=+$, we know by Theorem \ref{theorem2} that
 $g'$ is even so $N=0$ and $g=g'$. If $g$ is odd and $s=-$, then $g'$ is odd
 so again $N=0$. 
 
 \begin{remark} If we let $M_s(r)\subset\S^2(r)\times\R$, $s\in\{+,-\}$ be the two helicoidal minimal surfaces of odd genus $2g+1$ constructed in \cite{partI} (instead of even genus),
 then $M_+(R_n)$ will converge subsequentially to a minimal surface
 $M_+$ of even genus $g'$ and $\mu_E(M_+(R_n))=M_-(R_n)$ will converge subsequentially to a minimal surface $M_-$ of odd  genus $g''$. Then there are $4g+2-2g'-2g''=4N$ points
 on $M_+(R_n)\cap Y$ whose distance to $O$ and $O^*$ is unbounded.
 Following our line of argument one should be able to prove that $N\leq 1$.
 This, however, does not determine $g'$ nor $g''$, so is not enough to get the existence
 of helicoidal minimal surfaces in $\R^3$ of prescribed genus. This is the main reason why
 we only consider minimal surfaces of even genus in $\S^2\times\R$.
 The $\mu_E$ symmetry that these surfaces possess does make the proof somewhat  simpler.
 The only point where we use it in a fundamental way is in the proof of Proposition \ref{proposition-case3a} where we use Alexandrov reflection. Another argument would be required at this point in the odd-genus case.
 \end{remark}
 To prove Theorem \ref{maintheorem},
 assume that $N\geq 1$. We want to prove that $N=1$ by studying the $4N$ points whose
 distance to $O$ and $O^*$ is unbounded.
To do this, it is necessary to work on a different scale. Let $R_n$ be a sequence of radii with $R_n\to\infty$. Define
 $$M_n=\frac{1}{R_n} M_s(R_n)\subset \S^2(1)\times\R.$$
 This is a minimal surface in $\S^2(1)\times\R$.
 Each end of $M_n$ is asymptotic to a vertical translate of a helicoid of pitch
$$t_n=\frac{2\pi}{R_n}.$$
(The pitch of a helicoid with counterclockwise rotation is twice the distance between consecutive sheets.
The standard helicoid has pitch $2\pi$.)
Observe that $t_n\to 0$.
By the definition of $N$, the intersection $M_n\cap Y$ has $4N$ points whose distance to $O$ and $O^*$ is $\gg t_n$.
 Because $M_n$ is symmetric with respect to $180^{\circ}$ rotation $\rho_X$ around $X$, there are $2N$ points on the positive $Y$-axis.
 We order these by increasing
 imaginary part:
 $$p'_{1,n},p''_{1,n},p'_{2,n},p''_{2,n},\cdots,p'_{N,n},p''_{N,n}.$$
 Because of the $\rho_X$-symmetry,
 the $2N$ points on the negative $Y$-axis are the conjugates of these points.
 Define $p_{j,n}$ to be the midpoint of the interval $[p'_{j,n},p''_{j,n}]$ and
 $r_{j,n}$ to be half the distance in the spherical metric from $p'_{j,n}$ to
 $p''_{j,n}$.
   We have
  $$0<\Im p_{1,n}<\Im p_{2,n}<\cdots<\Im p_{N,n}.$$
By $\mu_E$-symmetry, which corresponds to inversion in the unit circle,
  \begin{equation}
  \label{equation**}
  p_{N+1-i,n}=\frac{1}{\overline{p_{i,n}}}.
  \end{equation}
  In particular, in case $N$ is odd, $p_{\frac{N+1}{2},n}=i$.
  
  \medskip
  
  For $\lambda>1$ sufficiently large,  let  $\mathcal Z_n(\lambda)$ be the part of $M_n$ lying inside of the vertical cylinders  of radius $\lambda t_n$ around $Z$ and $Z^*$:
 \begin{equation}\mathcal Z_n(\lambda)=\{q=(z,t)\in M_n\,:\, d(Z\cup Z^*, q)<\lambda t_n\}.
\end{equation}
Also define
 $D_{j,n}(\lambda) =\{z\,:\, d(z, p_{j,n})<\lambda r_{j,n}\}$. Consider the intersection of $M_n$ with the vertical cylinder over $D_{j,n}(\lambda)$, and let $C_{
j,n}(\lambda)$ denote the component of this intersection that contains
 the points $\{p'_{j,n}, p''_{j,n}\}$.
Define
\begin{equation}\label{All_Necks_1}
\mathcal C_n(\lambda)=\bigcup_{j=1}^N C_{j,n}(\lambda)\cup \overline{C_{j,n}(\lambda)}.
\end{equation}
The following proposition is key to setting up the analysis we will do in this paper to show that at most one handle is lost in taking the limit as $R_n\rightarrow\infty$. In broad terms, it says that  near the points $p_{j,n}$, catenoidal necks are forming on a small scale, and after removing these necks and a neighborhood of the axes,  what is left is a pair of symmetric surfaces which are vertical graphs over a half-helicoid.

 \begin{proposition}\label{Setup} Let  $N=g-g'$, $t_n$,  and  $M_n \subset \S^2(
1)\times \R$,  be as above.
Then
\begin {enumerate}        \item[i.]  For each $j$, $1 \leq j \leq N$,  the surface $\frac{1}{r_{j,n}}(M_n-p_{j,n})$ converges to  the standard  catenoid $\mathbf C$ with vertical
 axis  and waist circle of radius $1$ in $\R^3$. In particular, the distance (in the spherical metric) $d(p_{j,n},p_{j+1,n})$ is $\gg r_{j,n}$. Moreover, $t_n\gg r_{j,n}$ and the $C_{j,n}(\lambda)$ are close to catenoidal necks with collapsing radii.        \item[ii.]  Given $\epsilon>0$, there exists a $\lambda >0$ such that
  $$M_n'=M_n\setminus (\mathcal Z_n(\lambda)\cup\mathcal C_n(\lambda))$$
  has the following properties
                 \begin{enumerate}
                         \item The slope of the tangent plane at any point of $M'_n$ is less than $\epsilon$.
                         \item $M'_n$ consists of two components related by
 the symmetry $\rho _Y$, rotation by $180^\circ$ around $Y$.
                        \item $M'_n$ intersects $t_nH$ in a subset of the axis $X$ and nowhere else, with one of its components intersecting in a ray of the positive $X$-axis, the other in a ray of $X^-$. Each component is graphical over its projection onto the half-helicoid (a component of $t_nH\setminus (Z\cup Z^*)$) that it intersects.
                                        \end{enumerate}

\end{enumerate}
 \end{proposition}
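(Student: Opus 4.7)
The proposition has two substantial parts: (i) a blow-up analysis at each point $p_{j,n}$ yielding catenoidal necks together with the scale separations $d(p_{j,n},p_{j+1,n}) \gg r_{j,n}$ and $t_n \gg r_{j,n}$; and (ii) a graphical description of the complement. I would prove (i) first, since the scale separation $t_n \gg r_{j,n}$ is needed for (ii).

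\emph{Part (i).} First I would show $r_{j,n} \to 0$. By the $\mu_E$-symmetry relation \eqref{equation**}, the midpoints $p_{j,n}$ stay in a compact subset of $\S^2(1)\setminus\{O,O^*\}$. If $r_{j,n}$ were bounded below along a subsequence, the two points $p'_{j,n}$, $p''_{j,n}$ would remain at positive separation on the unit scale, which is inconsistent with the unit-scale degeneration of $M_n$ as $t_n\to 0$ (off the axes, the surfaces collapse onto nearly horizontal configurations with no isolated extra $Y$-intersections). Next, I rescale: $\widetilde M_n^j := (M_n - p_{j,n})/r_{j,n}$. The ambient metric rescales to something converging to flat $\R^3$; $\widetilde M_n^j$ is minimal, contains the two points $(0,\pm 1,0)$ on the $y$-axis, and is $\rho_Y$-invariant. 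A scale-selection argument secures uniform curvature bounds on compact sets (ruling out sub-scale blow-up limits using the genus bound $\leq g$ and consecutive-$Y$-intersection counting). I then extract a smooth subsequential limit $\widetilde M^j$ in $\R^3$: it is complete, embedded, of finite genus, $\rho_Y$-invariant, meets the $y$-axis only at $(0,\pm 1,0)$ (by consecutiveness of $p'_{j,n}$ and $p''_{j,n}$), and has at least two ends because the neck loop persists homologically. By the classification of embedded minimal surfaces with finite total curvature, two ends, and this symmetry/point data, $\widetilde M^j$ must be the standard vertical catenoid with waist radius $1$. The consequences $d(p_{j,n},p_{j+1,n}) \gg r_{j,n}$ and $t_n \gg r_{j,n}$ follow: otherwise, additional $Y$-intersections in the rescaled limit, or residual ambient helicoidal curvature at scale $r_{j,n}$, would contradict the catenoid identification. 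Closeness of $C_{j,n}(\lambda)$ to a catenoidal neck is then immediate from smooth convergence on compact sets.

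\emph{Part (ii).} With $t_n \gg r_{j,n}$ and neck localization in hand, on $M_n' = M_n \setminus (\mathcal{Z}_n(\lambda) \cup \mathcal{C}_n(\lambda))$ the surface avoids both the tightly wound helicoidal core and the neck regions. By standard estimates for embedded minimal surfaces (Colding--Minicozzi lamination theory or the one-sided curvature estimate, applied on the ambient scale $\sim t_n$), $M_n$ is $C^1$-close on $M_n'$ to the helicoid $t_n H$ of pitch $t_n$. Since $t_n H$ at distance $\gg t_n$ from $Z \cup Z^*$ has slope tending to zero, property (a) follows. Property (c) is now immediate from Proposition~\ref{proposition*1}(2), which asserts $M_n \cap t_n H = X \cup Z \cup Z^*$ exactly: the removal of $\mathcal{Z}_n(\lambda)$ deletes $Z \cup Z^*$, so the intersection in $M_n'$ is a subset of $X$, and graphicality of each component over the appropriate half-helicoid follows from the small-slope bound. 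Property (b) is then a consequence of the $\rho_Y$-symmetry of $M_n$ exchanging the two components of $t_n H \setminus (Z \cup Z^*)$.

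\emph{Main obstacle.} The crux of the whole proof is in part (i): showing that the rescaled limit is exactly a catenoid. Two issues are delicate: (a) ruling out curvature concentration at scales smaller than $r_{j,n}$, so that $r_{j,n}$ is genuinely the catenoid scale and not some off-scale artifact; and (b) certifying that $\widetilde M^j$ has exactly two ends (rather than one, which would yield a helicoidal or planar limit, or more, yielding a higher-genus surface). Both points are controlled by the genus bound inherited from $M_n$, the embeddedness of the limit, and the topology of the pinching neck loop, but the bookkeeping requires care. The scale separation $t_n \gg r_{j,n}$, on which the entire argument for part (ii) hinges, is itself a delicate consequence of this analysis and is the place where the flat Euclidean limit of the rescaled ambient metric enters most essentially.
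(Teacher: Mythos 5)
The first thing to say is that this paper does not prove Proposition~\ref{Setup} at all: immediately after the statement it says ``This proposition is proved in Theorem 16.9 and Corollary 16.13 of \cite{partI}.'' So there is no in-paper argument to compare yours against line by line. Your outline — blow up at the scale $r_{j,n}$, identify the limit as a catenoid, then use the resulting scale separation plus curvature estimates to write the rest of the surface as a pair of multigraphs over the half-helicoids — is the strategy one expects the cited results of \cite{partI} to implement, and parts of your part (ii) (deducing (c) from Proposition~\ref{proposition*1}(2) after rescaling, and (b) from $\rho_Y$-symmetry once the two-component structure is known) are sound.

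As a proof, however, the proposal has genuine gaps exactly at the load-bearing steps of part (i), which you largely assert rather than establish. (a) The uniform curvature bound at scale $r_{j,n}$ is dispatched with ``a scale-selection argument secures uniform curvature bounds''; this is the hard analytic input and nothing in the proposal supplies it. (b) You argue the blow-up limit has at least two ends ``because the neck loop persists homologically,'' but the existence of a homologically nontrivial collapsing curve near $p_{j,n}$ is precisely what is to be proved; the only datum is a consecutive pair of points of $M_n\cap Y$, and converting that into a neck requires an argument. (c) Finite total curvature of the limit is invoked without justification (one needs finite topology and at least two ends, e.g.\ via Collin's theorem, before Schoen's or L\'opez--Ros' classification applies), and one must separately exclude one-ended or multiplicity-two planar limits. (d) The scale separations $t_n\gg r_{j,n}$ and $d(p_{j,n},p_{j+1,n})\gg r_{j,n}$ are deduced from the catenoid identification, but the clean flat blow-up limit used to obtain that identification already presupposes some separation of scales, so the order of quantifiers must be handled with care (typically by choosing $r_{j,n}$ via a curvature-maximum or point-selection argument rather than as half the distance between $p'_{j,n}$ and $p''_{j,n}$ a priori). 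Finally, in part (ii), the slope bound alone does not give that $M'_n$ has exactly two components each graphical over a single half-helicoid; one must also control the number of sheets of the multigraph, which is again the substance of the structure theory being cited. None of these objections says your approach would fail — it is the right approach — but the proposal as written leaves unproved precisely the points where the content of Theorem 16.9 and Corollary 16.13 of \cite{partI} lies.
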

 This proposition is proved in Theorem 16.9 and Corollary 16.13 of \cite{partI}.
 The notations in \cite{partI} are slightly different: See Remark \ref{remark-notations}
 below.
 \medskip
 
Passing to a subsequence, $p_j=\lim p_{j,n}\in i\R^+\cup\{\infty\}$
exists for all $j\in[1,N]$. We have $p_1\in[0,i]$, and we will consider the following three cases:
\begin{equation}
\label{equation*2}
\begin{array}{ll}
\bullet & \text{Case 1: } p_1\in(0,i),\\
\bullet & \text{Case 2: } p_1=0,\\
\bullet & \text{Case 3: }  p_1=i.
\end{array}
\end{equation}
We will see that Case 1 and Case 2 are impossible, and that $N=1$ in Case 3.
\subsection{The physics behind the proof of Theorem \ref{theorem4}}
\label{section-physics}
Theorem \ref{theorem4} is proved by evaluating the surface tension in the
$Y$-direction on each catenoidal neck.
Mathematically speaking, this means the flux of the horizontal Killing field tangent to the 
$Y$-circle in $\S^2\times\R$.
On one hand, this flux vanishes at each neck by $\rho_Y$-symmetry
(see Lemma \ref{lemma*8}).
On the other hand, we can compute the limit $F_i$ of the surface tension on the $i$-th catenoidal neck (corresponding to $p_i=\lim p_{i,n}$) as $n\to\infty$, after suitable scaling.

\medskip

Assume for simplicity that the points $O$, $p_1,\cdots,p_N$ and $O^*$ are
distinct. Recall that the points $p_1,\cdots p_N$ are on the positive imaginary $Y$-axis.
For $1\leq j\leq N$, let $p_j=i y_j$, with $0<y_j<\infty$.
Then we will compute that
$$F_i=c_i^2\frac{1-y_i^2}{1+y_i^2}+\sum_{j=1\atop j\neq i}^N c_i c_j f(y_i,y_j)$$
where the numbers $c_i$ are positive and proportional to the size of
the catenoidal necks and
$$f(x,y)=\frac{-2\pi^2}{(\log x-\log y)|\log x-\log y+i\pi|^2}.$$
Observe that $f$ is antisymmetric and $f(x,y)>0$ when $0<x<y$.
We can think of the point $p_i$ as a particle with mass $c_i$ and interpret $F_i$ as a force of gravitation type.
The particles $p_1,\cdots,p_N$ are attracted to each other
and we can interpret the first term by saying that each particle
$p_i$ is repelled from the fixed antipodal points $O$ and $O^*$.
All forces $F_i$ must vanish. It is physically clear that no equilibrium is possible unless
$N=1$ and $p_1=i$. Indeed in any other case, $F_1>0$.

\medskip

This strategy is similar to the one followed in \cite{traizet1} and \cite{traizet2}.
The main technical difficulty is that we cannot guarantee that the points $O$, $p_1,\cdots,
p_N$ and $O^*$ are distinct. The distinction between Cases 1, 2 and 3 in \eqref{equation*2} stems from this problem.
\def\CC{\widetilde{\C^*}}
\subsection{The space $\CC$}
To compute forces we need to express $M_n$ as a graph.
For this, we need to express the helicoid itself as a graph, away from its axes $Z$ and
$Z^*$.
Let $\CC$ be the universal cover of $\C^*$. Of course, one can identify $\CC$ with $\C$ by mean of the exponential function. It will be more convenient to see $\CC$ as the covering space obtained by analytical continuation of $\log z$, so each point of $\CC$ is a point of $\C^*$ together with a determination of its argument : points are couples $(z,\arg(z))$,
although in general we just write $z$.
The following two involutions of $\CC$ will be of interest:
\begin{itemize}
\item $(z,\arg(z))\mapsto (\overline{z},-\arg(z))$, which we write simply as $z\mapsto \overline{z}$. The fixed points are $\arg z=0$.
\item $(z,\arg(z))\mapsto (1/\overline{z},\arg(z))$, which we write simply as
$z\mapsto 1/\overline{z}$. The fixed points are $|z|=1$.
\end{itemize}
The graph of the function
$\frac{t}{2\pi}\arg z$ on $\CC$ is one half of a helicoid of pitch $t$.
\subsection{The domain $\Omega_n$ and the functions $f_n$ and $u_n$}
\label{section25}
  By Proposition~\ref{Setup}, away from the axes $Z\cup Z^*$ and the points
  $p_{j,n}$, we  may consider $M
_n$ to be the union of two  multigraphs. We wish to express  this part of $M_n$ 
as a  pair of graphs over a subdomain of $\CC$.
We will allow ourselves the freedom to write  $z$ for a point $(z,\arg z)\in \CC$ when its argument is clear from the context. Thus we will write $p_{j,
n}$ for the point $(p_{j,n}, \pi/2)$ in $\CC$ corresponding to the points on $M_n\cap Y
$ in Proposition~\ref{Setup}.  Define
\begin{equation}\label{D_n_Lambda}
D_n(\lambda)=\{\,(z,\arg z)\,:\, |z|<\lambda t_n \text{ or } |z|>\frac{1}{\lambda t_n}\},
\end{equation}
\begin{equation}\label{D_j_n}
D_{j,n}(\lambda)=\{\,(z,\arg z)\,:\, d(p_{j,n}, z)<\lambda r_{j,n}\text{ and } 0<\arg z <\pi\}
\end{equation}
and
\begin{equation}\label{Domain_1}
\Omega_n=\Omega_n(\lambda)=\CC\setminus \left(D_n(\lambda) \cup \bigcup_{j=1}^{N}D_{j,n}(\lambda)\cup\overline{D_{j,n}(\lambda)}\right ).
\end{equation}
According to Statement~$ii.$ of Proposition~\ref{Setup},  there exists a $\lambda>0$
such that for sufficiently large $n$,   $$M'_n=M_n\cap (\Omega_n(\lambda)\times\R)$$  is the union of two graphs related by $\rho_Y$-symmetry, and each graph intersects  the helicoid of pitch $t_n$ in a subset of the  $X$-axis. Only one of these graphs can contain points on the positive $X$-axis. We choose this component and write it
 as the graph of a function $f_n$ on the domain $\Omega_n$. We may write
 \begin{equation}\label{equation*3}
 f_n(z)=\frac{t_n}{2\pi}\arg z-u_n(z).
 \end{equation}
The function $u_n$ has the following properties:
\begin{equation}
\label{equation*4}
\begin{array}{ll}
\bullet & \text{ $u_n(\overline z) =-u_n(z)$. In particular, $u_n=0 $ on $\arg z=0$.}\\
\bullet & \text{$u_n(1/\overline z) =u_n(z)$ In particular, $\partial u_n/\partial \nu =0$ on $|z|=1$.}\\
\bullet & \text{ $0<u_n<t_n/2$ when $\arg z >0$.}
\end{array}
\end{equation}
The first two assertions follow from the symmetries of $M_n$. See Proposition~\ref{proposition*1} (Statements 2 and 3),  and the discussion preceding it.  The third assertion follows Proposition~\ref{Setup}, Statement ~$ii.c$, which implies that
$$0<|u_n|<t_n/2$$
  when $\arg z>0$, since the vertical distance between the sheets of $t_nH$ is equal to $t_n/2$.
Now choose a point $z_0$ in the domain of $f_n$  that is near the a point $p_{j,n}$. Then $|f_n(z_0)|$ is small,
and $\arg z_0$ is near $\pi/2$. Hence $f_n(z_0)\sim t_n/4 -u_n(z_0)$, which implies that $u_n(z_0)>0$. We conclude that
$0<u_n<t_n/2$ when $\arg z >0$, as claimed.

\begin{figure}
\label{figure-domain}
\begin{center}
\includegraphics[width=60mm]{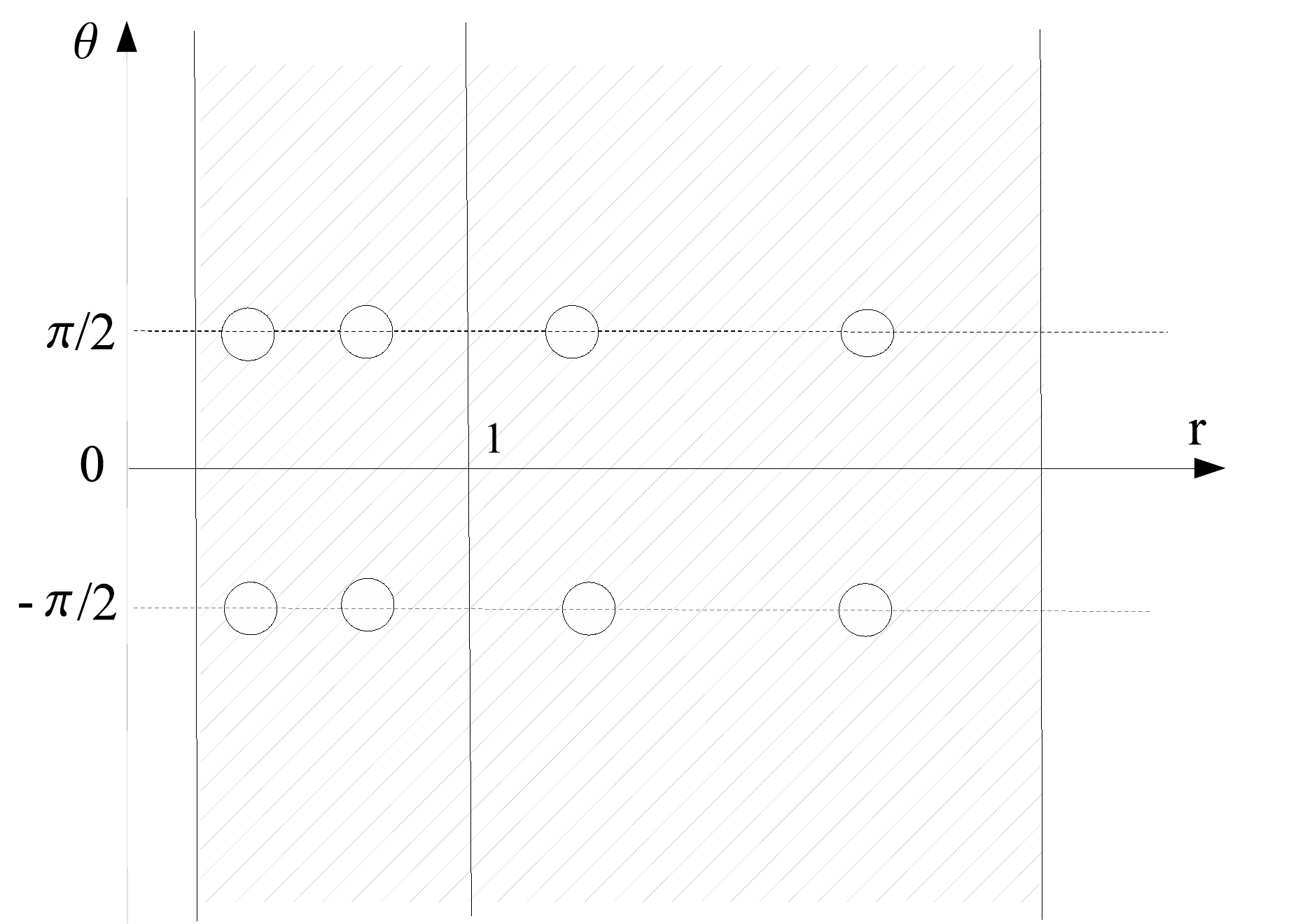}
\end{center}
\caption{The domain $\Omega_n$ in polar coordinates, $z=re^{i\theta}$.
The function $u_n$ is positive for $\theta>0$. 
 The line $r=1$ corresponds to the unit circle $|z|=1$. The white strip on the left corresponds to the projection of the vertical cylinder of radius $\lambda t_n$ about the $Z$-axis, and the region to the right of the shaded domain is its image by the inversion through the unit circle. The small disks correspond to the vertical cylinders of radius $\lambda r_{j,n}$ (in the spherical metric).
}
\end{figure}
\begin{remark}
\label{remark-notations} There are some notational differences between \cite{partI} and the present paper:
\begin{itemize}
\item In \cite{partI}, $z$ denotes the third coordinate in $\R^3$. Here $z=x+i y$ is a complex variable.
\item 
In \cite{partI}, the pitch of a helicoid is denoted $2\eta$.
 Here it is denoted $t$.
\item 
In Section 16 of \cite{partI},
 the angle $\theta$ is measured from the positive
$Y$-axis, whereas here, it is measured from the positive $X$-axis.
\item The conformal factor for the spherical metric in \cite{partI} is slightly different from
\eqref{equation-metric-S2}. It is chosen there so that it converges to $1$ as $r\to\infty$.
\item In paper \cite{partI}, $M_s(r)$ has genus $g$, whereas here it has genus $2g$.
\end{itemize}
\end{remark}
\subsection{Organization of the paper}
We deal with Cases 1, 2 and 3, as listed in \eqref{equation*2}, separately. In each case, we first state, without proof, a proposition
which describes the asymptotic behavior of the function $u_n$ defined by \eqref{equation*3} as $n\to\infty$.
We use this result to compute forces and obtain the required result (namely, $N=1$
or a contradiction). Then, we prove the proposition.
Finally, an Appendix contains analytic and geometric results relevant to minimal surfaces in $\S^2\times\R$,
which are used in this paper.
\section{Case 1: $p_1\in (0,i)$}
\label{section2}
For $p\in\CC$, let $h_p$ be the harmonic function defined on
$\CC\setminus\{p,\overline{p}\}$ by
\begin{equation}
\label{equation*5}
h_p(z)=-\log\left|\frac{\log z-\log p}
{\log z-\log\overline{p}}\right|.
\end{equation}
Note that since $p$ and $z$ are in $\CC$, both come with a determination of their logarithm, so the function $h_p$ is well defined.
This function has the same symmetries as $u_n$:
\begin{equation}
\label{equation*6}
\begin{array}{ll}
\bullet & h_p(\overline{z})=-h_p(z),\\
\bullet & h_p(1/\overline{z})=h_{1/\overline{p}}(z).\\
\bullet &\text{ Moreover, if $\arg p$ and $\arg z$ are positive then $h_p(z)>0$.}
\end{array}
\end{equation}
\begin{remark}
The function $(z,p)\mapsto -h_p(z)$ is the Green function of the domain $\arg z>0$ of $\CC$.
\end{remark}
Recall that $p_i=\lim p_{i,n}$. It might happen that several points $p_j$ are equal to $p_i$.
In this case, we say that we have a cluster at $p_i$.
Let $m$ be the number of distinct points amongst $p_1,\cdots,p_N$.
Relabel the points so that $p_1,\cdots,p_m$ are distinct and
$$\Im p_1<\Im p_2\cdots<\Im p_m.$$
(In other words, we have selected one point in each cluster.)
Let us define
\begin{equation}
\label{equation*7}
\wtu_n=
\frac{|\log t_n|}{t_n} u_n.
\end{equation}
\begin{proposition}
\label{proposition1}
Assume that $p_1\neq 0$. Then there exists a subsequence and non-negative real numbers
$c_0,\cdots,c_m$ such that
\begin{equation}
\label{equation-limit-un}
\wtu(z):=\lim \wtu_n(z)=c_0\arg z+\sum_{i=1}^m
c_ih_{p_i}(z).
\end{equation}
The convergence is the usual smooth uniform convergence on compact subsets of $\CC$ minus the points
$p_i$, $-p_i$ for $1\leq i\leq m$.
Moreover, for $1\leq i\leq m$,
\begin{equation}
\label{equation-ci}
c_i=\lim\frac{|\log t_n|}{t_n}\frac{\phi_{i,n}}{2\pi}
\end{equation}
where $\phi_{i,n}$ is the vertical flux of $M_n$ on
the graph of $f_n$ restricted to the circle $C(p_i,\varepsilon)$ for a fixed, small enough $\varepsilon$.
\end{proposition}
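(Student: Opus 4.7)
\emph{Overview and PDE setup.} The plan is to show that $\wtu_n$ satisfies a uniformly elliptic equation whose leading part is $\Delta$, extract a harmonic limit $\wtu$ on $\CC\setminus\{p_i,-p_i\}$, classify it from its symmetries and singularity data, and then match the $c_i$ to rescaled vertical fluxes. The normalization $|\log t_n|/t_n$ is dictated by the catenoid: matching a standard catenoid of waist $r_{i,n}$ to the ambient helicoid of pitch $t_n$ forces $r_{i,n}\sim t_n/|\log t_n|$, so each vertical flux $\phi_{i,n}\asymp r_{i,n}$ contributes the bounded quantity $c_{i,n}:=|\log t_n|\phi_{i,n}/(2\pi t_n)$. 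Concretely, the minimal surface equation for the metric \eqref{equation-metric-S2xR}, combined with \eqref{equation*3} and the near-horizontality of the tangent planes of $M'_n$ from Proposition~\ref{Setup}.ii.(a), gives a uniformly elliptic quasilinear equation for $u_n$ whose coefficients converge on compact subsets of $\CC\setminus\{p_i,-p_i\}$ to those of the flat Laplacian; after multiplication by $|\log t_n|/t_n$ this becomes $\Delta\wtu_n=o(1)$. Together with the crude bound $|\wtu_n|\le|\log t_n|/2$ from \eqref{equation*4}, Schauder estimates give uniform $C^{2,\alpha}_{\mathrm{loc}}$ bounds, so a subsequence converges to a harmonic $\wtu$ that inherits the symmetries and non-negativity listed in \eqref{equation*4}.

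\emph{Logarithmic singularities at the $p_i$.} By Proposition~\ref{Setup}.i., near each $p_{i,n}$ the surface $\frac{1}{r_{i,n}}(M_n-p_{i,n})$ converges to the standard catenoid. Writing the upper sheet as $r_{i,n}\,\arccosh(\rho/r_{i,n})\sim r_{i,n}\log(\rho/r_{i,n})$ in the coordinate $\log z$ on $\CC$, one finds that for $r_{i,n}\ll|\log z-\log p_{i,n}|\ll 1$ the displacement $u_n$ equals $\tfrac{\phi_{i,n}}{2\pi}\log|\log z-\log p_{i,n}|$ plus bounded smooth terms, where $\phi_{i,n}\sim 2\pi r_{i,n}$ is the waist flux. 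Passing to a further subsequence so that $c_i:=\lim_{n\to\infty}c_{i,n}$ exists in $[0,\infty)$ and clustering the $p_{j,n}$ sharing the same limit $p_i$, the singular parts of $\wtu_n$ at $p_{i,n}$ and its conjugate combine, via $u_n(\overline z)=-u_n(z)$, into the term $c_i h_{p_i}$. Hence $v:=\wtu-\sum_{i=1}^m c_i h_{p_i}$ extends to a harmonic function on all of $\CC$, still satisfying $v(\overline z)=-v(z)$ and $v(1/\overline z)=v(z)$. Non-negativity of each $c_i$ is forced by the positivity of $\wtu$ for $\arg z>0$ and the fact that $h_{p_i}\to+\infty$ at $p_i$.

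\emph{Liouville step and flux identification.} A harmonic function on $\CC$ with these two symmetries is determined by its restriction to the fundamental half-strip $\{0<\arg z<\pi,\;|z|\le 1\}$, with Dirichlet data on $\arg z=0$ and Neumann data on $|z|=1$. The bound $|\wtu_n|\le|\log t_n|/2$ permits $v$ at worst linear growth in $\arg z$ and logarithmic growth in $\log|z|$. Expanding $v$ as a Fourier series in the coordinate $\log z$ on the half-strip and using the mixed Dirichlet--Neumann boundary data together with the admissible growth rate forces all modes to vanish except the linear mode in $\arg z$, yielding $v=c_0\arg z$ with $c_0\ge 0$. This establishes \eqref{equation-limit-un}. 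Finally \eqref{equation-ci} follows by integrating $*d\wtu$ on a small loop about $p_i$: on the limit side the integral equals $2\pi c_i$, while on the prelimit side it equals the rescaled vertical flux $|\log t_n|\phi_{i,n}/t_n$ up to $o(1)$.

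\emph{Main obstacle.} Two issues dominate. First, the cluster phenomenon: several $p_{j,n}$ can collapse to the same $p_i$, and one must show that their individual logarithmic singularities combine into a single $c_i h_{p_i}$ term rather than producing higher multipole contributions that would survive the rescaling by $|\log t_n|/t_n$; this requires comparing the cluster collapse scale with the neck scale $r_{i,n}$. Second, the Liouville classification must rigorously exclude higher Fourier modes that could in principle enter through the boundary data on $\partial D_n(\lambda)$ around the axes $Z$ and $Z^*$; tracking how those boundary values vanish after rescaling, and using the inversion symmetry $\mu_E$ to pin down the behaviour near $|z|=1$, is the delicate analytic step that makes the decomposition \eqref{equation-limit-un} with only the $\arg z$ mode in $v$ genuinely rigid.
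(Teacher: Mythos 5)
The central gap is in your compactness step. The only pointwise bound you invoke for $\wtu_n$ is $|\wtu_n|\le|\log t_n|/2$, which diverges as $n\to\infty$; Schauder estimates scale with the sup norm, so they yield interior $C^{2,\alpha}$ bounds of order $|\log t_n|$, not uniform bounds, and no convergent subsequence of $\wtu_n$ itself can be extracted this way. (At best one could normalize by some unknown $\lambda_n$ and get a nonzero limit via Harnack, as the paper remarks; but the entire content of the proposition is that the specific factor $|\log t_n|/t_n$ is the correct one.) The paper closes exactly this gap with an explicit barrier: on $A_n=\{t_n^\alpha<|z|<1,\ \arg z>0\}\setminus\bigcup_i D(p_{i,n},t_n^\alpha)$ it proves $\wtu_n\le v_{1,n}+v_{2,n}+v_{3,n}$ by the maximum principle, where $v_{1,n}$ (built from $g_n$ with $\Delta g_n\ge 4/\delta^4$, Lemma~\ref{lemma2}) absorbs the failure of harmonicity quantified in Lemma~\ref{lemma1}, $v_{2,n}=\frac1\alpha\sum_i h_{p_{i,n}}$ dominates the divergent boundary value $\tfrac12|\log t_n^\alpha|$ on the circles $C(p_{i,n},t_n^\alpha)$, and $v_{3,n}=\frac1\alpha H_{t_n^\alpha}$ (Lemma~\ref{lemma3}) dominates it on $|z|=t_n^\alpha$ while preserving the Neumann condition on $|z|=1$. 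Only after this does one obtain bounds for $\wtu_n$ that are uniform in $n$ on compact subsets away from $\pm p_i$, hence a finite harmonic limit. Your proposal contains no substitute for this step.

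A second, related gap: your assertion that in an intermediate range $r_{i,n}\ll|\log z-\log p_{i,n}|\ll1$ one has $u_n=\frac{\phi_{i,n}}{2\pi}\log|\log z-\log p_{i,n}|$ plus bounded terms does not follow from Proposition~\ref{Setup}, which gives convergence to the catenoid only at the scale $r_{i,n}$; propagating logarithmic growth across an unbounded ratio of scales (and through a possible cluster of necks) is essentially the statement being proved. The paper avoids this entirely: it derives the singular structure of the limit abstractly via B\^ocher's theorem together with the classification of positive harmonic functions on a half-plane vanishing on the boundary (Lemma~\ref{lemma-positive-harmonic}) --- this also replaces your Fourier-mode Liouville argument, whose growth hypotheses on $v$ you have not established --- and then identifies $c_i$ a posteriori by computing $\lim\frac{|\log t_n|}{t_n}\phi_{i,n}$ with the flux formula of Proposition~\ref{proposition-flux2} and the residue theorem. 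Note also that positivity of the $c_i$ is not part of Proposition~\ref{proposition1}; it is a separate statement (Proposition~\ref{proposition2}) requiring the height estimate of Appendix~\ref{appendix-height}, so your derivation should not rely on it here.
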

In other words, $\phi_{i,n}$ is the sum of the vertical fluxes on the catenoidal necks
corresponding to the points $p_{j,n}$ such that $p_j=p_i$.
\begin{remark}
We allow $p_1=i$ as this proposition will be used in Case 3, Section \ref{section4}.
\end{remark}
This proposition is proved in Section \ref{section-proof-proposition1} by estimating the Laplacian of $u_n$ and
constructing an explicit barrier, from which we deduce that a subsequence converges to a limit harmonic function on $\CC$ with logarithmic singularities at $\pm p_1,\cdots,\pm p_m$.
\begin{remark}
In Proposition \ref{proposition1},  it is easy to show using Harnack's inequality
that we can choose numbers
$\lambda_n>0$ so that $\lambda_n u_n$ converges subsequentially to a nonzero limit of the form \eqref{equation-limit-un}.
(One fixes a point $z_0$ and lets $\lambda_n= 1/u_n(z_0)$.)   However, for us it is crucial 
that we can choose $\lambda_n$ to be $\frac{|\log t_n|}{t_n}$; it means that in later calculations, we will be able to ignore terms that are $o(\frac{|\log t_n|}{t_n})$.
\end{remark}

For all we know at this point, the limit $\wtu$ might be zero.
We will prove this is not the case:
\begin{proposition}
\label{proposition2} For each $i\in[1,m]$, $c_i>0$.
\end{proposition}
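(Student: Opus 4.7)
The strategy is proof by contradiction. Assume $c_i=0$ for some $i\in[1,m]$. By~\eqref{equation-ci} together with the catenoidal flux $\phi_{j,n}=2\pi r_{j,n}(1+o(1))$ (from Proposition~\ref{Setup}(i)), this is equivalent to
\begin{equation*}
R_n\;:=\;\sum_{j:\,p_j=p_i}r_{j,n}\;=\;o\!\left(\frac{t_n}{|\log t_n|}\right),
\end{equation*}
a statement we label $(\ast)$.

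The plan is to derive a contradiction by matching the behavior of $u_n$ across two scales. \emph{Inner (catenoid) boundary:} each $p_j=p_i$ lies on the positive imaginary axis so $\arg p_{j,n}\to\pi/2$; combined with the $\rho_Y$-symmetry that centers the catenoid at height~$0$ and the catenoid profile $f_n\approx r_{j,n}\arccosh(|z-p_{j,n}|/r_{j,n})$ from Proposition~\ref{Setup}(i), the definition~\eqref{equation*3} yields
\begin{equation*}
u_n(z)=\frac{t_n}{4}\bigl(1+o(1)\bigr)\quad\text{on }|z-p_{j,n}|=\lambda r_{j,n}.
\end{equation*}
\emph{Outer (macroscopic) boundary:} the hypothesis $c_i=0$ makes $\tilde u=\lim\tilde u_n$ regular (non-singular) at $p_i$, hence bounded on a small punctured neighborhood; uniform convergence in Proposition~\ref{proposition1} then gives
\begin{equation*}
u_n(z)=O\!\left(\frac{t_n}{|\log t_n|}\right)\quad\text{on }|z-p_i|=\delta
\end{equation*}
for any fixed small $\delta>0$.

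The matching is performed on the annular region between these boundaries. Writing $u_n(z)=\sum_{j:\,p_j=p_i}-r_{j,n}\log|z-p_{j,n}|+w_n(z)+\text{const}$ with $w_n$ a bounded approximately-harmonic remainder (no logarithmic singularities in the annulus), and equating boundary expressions, one obtains at the dominant neck $j^\ast$ with $r_{j^\ast,n}=\max_k r_{k,n}$ the relation
\begin{equation*}
r_{j^\ast,n}\,|\log r_{j^\ast,n}|=\frac{t_n}{4}\bigl(1+o(1)\bigr),
\end{equation*}
the cross-neck terms being $o(t_n)$ under $(\ast)$ because $|p_{j,n}-p_{k,n}|\gg r_{j,n}$ (Proposition~\ref{Setup}(i)). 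Solving this implicit equation iteratively yields $r_{j^\ast,n}\sim t_n/(4|\log t_n|)$, hence $R_n\geq r_{j^\ast,n}\gtrsim t_n/|\log t_n|$, contradicting~$(\ast)$. We conclude $c_i>0$.

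The principal obstacle is making the matching rigorous. The key technical points are: (a) quantifying the catenoid approximation error on the circles $|z-p_{j,n}|=\lambda r_{j,n}$ uniformly as $n\to\infty$; (b) controlling the harmonic remainder $w_n$ across the annulus via the maximum principle and the minimal surface PDE (as a small perturbation of the Laplace equation in the nearly-flat graph regime); and (c) in the multi-neck case, excluding a possible super-polynomial intra-cluster scale, which we would handle by invoking the Harnack inequality on $\tilde u_n$ together with the separation estimate $|p_{j,n}-p_{k,n}|\gg r_{j,n}$ to bound the cross contributions polynomially in $|\log t_n|$.
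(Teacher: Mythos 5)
Your underlying mechanism is the right one, and it is the same one the paper uses: $u_n$ must drop from $\tfrac{t_n}{4}(1+o(1))$ at the necks to $O(t_n/|\log t_n|)$ at the fixed scale, and a function with flux $\phi_{i,n}$ can only achieve a drop of order $\phi_{i,n}\log(\varepsilon/r_{i,n})$ across that range of scales; your matching relation $r_{j^*,n}|\log r_{j^*,n}|\sim t_n/4$ is exactly the borderline case of the paper's inequality \eqref{FluxEstimateOne} and reproduces the correct bound \eqref{equation-minoration-rin}. The difference is how this tradeoff is made rigorous. The paper does not use matched harmonic expansions at all: it applies the height estimate of Proposition \ref{proposition-height} (Appendix \ref{appendix-height}), an integral argument combining the divergence form of the minimal surface equation, the divergence theorem, and Cauchy--Schwarz on the annulus $\lambda r_{i,n}\le |z-p_{i,n}|\le\varepsilon$. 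That argument needs only the gradient bound $\|\nabla_g f\|_g\le 1$ from Proposition \ref{Setup} and the total flux; the other necks of the cluster enter only through the sign conditions (3)--(4) of Proposition \ref{proposition-height}, which hold because every $h_{j,n}\ll t_n$. A short bootstrap (first $\phi_{i,n}\ge Ct_n^2$, then reinsertion into \eqref{FluxEstimateOne}) converts $\log(\varepsilon/r_{i,n})$ into $O(|\log t_n|)$ and yields $c_i>0$.

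The two technical points you defer are not routine details but the crux, and as stated they do not go through. For (b): controlling the remainder $w_n$ requires estimating the failure of harmonicity of $u_n$ on the whole intermediate annulus $\lambda r_{j,n}\le|z-p_{j,n}|\le t_n$. The Schauder estimate of Proposition \ref{proposition-schauder} only applies at distance $\ge t_n$ from $\partial\Omega_n$, and Proposition \ref{Setup}(i) only gives convergence to the catenoid on compact sets of the rescaled surface, i.e.\ for $|z-p_{j,n}|\le\Lambda r_{j,n}$ with $\Lambda$ fixed; neither controls the transition region, and the natural error bounds there degenerate as $r_{j,n}\to 0$ --- that is, precisely in the regime your contradiction hypothesis $(\ast)$ puts you in, so the argument is circular without an a priori lower bound on $r_{j,n}$. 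For (c): the cross terms are not $o(t_n)$ under $(\ast)$. Proposition \ref{Setup}(i) gives $|p_{j,n}-p_{k,n}|\gg r_{j,n}$ but no lower bound on intra-cluster separations in terms of $t_n$, so a term $r_{k,n}\bigl|\log|p_{j^*,n}-p_{k,n}|\bigr|$ could a priori be of size $o\!\left(\frac{t_n}{|\log t_n|}\right)\cdot|\log(\text{separation})|\gg t_n$ if that separation is super-polynomially small in $t_n$; Harnack on $\wtu_n$ does not exclude this. Replacing the matching step by the integral height estimate removes both difficulties at once.
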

This proposition is proved  in Section \ref{section22} using a height estimate, Proposition \ref{proposition-height}, to estimate the vertical flux of the catenoidal necks.

\medskip
From now on assume that $p_1\in (0,i)$.
Fix some small number $\varepsilon$ and let $F_n$ be the flux of the Killing field
$\chi_Y$ on the circle $C(p_1,\varepsilon)$.
The field $\chi_Y$ is the Killing field associated with rotations with respect to poles
whose equator is the $Y$-circle
(see Proposition \ref{proposition-flux1} in Appendix \ref{appendix-flux}).
On one hand, we have:
\begin{lemma}
\label{lemma*8}
$F_n=0$.
\end{lemma}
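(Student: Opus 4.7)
The plan is to exploit the $\rho_Y$-symmetry of $M_n$ (Proposition~\ref{proposition*1}(4)), together with the way this involution acts on the Killing field $\chi_Y$. The field $\chi_Y$ generates the $1$-parameter group of rotations of $\S^2$ whose equator is $Y$, and the two fixed poles of these rotations are the antipodal points $\pm 1\in E$. In our model, $\rho_Y$ acts on $\S^2\times\R$ by $(z,t)\mapsto(-\bar z,-t)$ and interchanges $+1$ with $-1$, so it conjugates this $1$-parameter subgroup to its inverse. Differentiating, one obtains
\[
(\rho_Y)_*\chi_Y=-\chi_Y.
\]

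Next I would argue that the cycle underlying $F_n$ is $\rho_Y$-invariant. Since $p_1=iy_1$ lies on $Y$, the circle $C(p_1,\varepsilon)\subset\C$ is preserved setwise by $z\mapsto -\bar z$, and the piece $T_n$ of $M_n$ lying over the disk $\{|z-p_1|<\varepsilon\}$ --- including the cluster of catenoidal necks collapsing to $p_1$ together with the two graphical sheets of $M_n'$ --- is carried to itself by $\rho_Y$. By Proposition~\ref{proposition*1}(4), $\rho_Y$ preserves the orientation of $M_n$; it merely interchanges the two sheets of $M_n'$ above $C(p_1,\varepsilon)$. Consequently the signed boundary $\partial T_n$, which is the cycle $\gamma$ appearing in the definition of $F_n$, is sent to itself as an oriented $1$-cycle.

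Combining these two ingredients via the standard isometry invariance of the flux of a Killing field on a minimal surface yields
\[
F_n=\mathrm{Flux}(\chi_Y,\gamma)=\mathrm{Flux}((\rho_Y^{-1})_*\chi_Y,\rho_Y^{-1}(\gamma))=\mathrm{Flux}(-\chi_Y,\gamma)=-F_n,
\]
forcing $F_n=0$. The delicate point is orientation bookkeeping: one must verify that the three sign-reversals in the $\rho_Y$-action --- the reversal of the orientation of the base circle by $z\mapsto-\bar z$, the reversal of the vertical by $t\mapsto-t$, and the interchange of the two sheets of $M_n'$ --- combine to give a net orientation-preserving action on $\gamma$. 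If $F_n$ is instead interpreted as the flux on just one of the two lifted cycles, the same symmetry argument applies after passing to a $\rho_Y$-invariant homologous cycle such as the sum of waist circles of the catenoidal necks in the $p_1$-cluster; each such waist sits at a point of $Y$ and is individually preserved by $\rho_Y$, so the conclusion is unchanged.
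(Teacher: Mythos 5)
Your overall strategy---exploit the $\rho_Y$-symmetry together with isometry- and homology-invariance of the flux---is exactly the paper's, but the sign bookkeeping, which is the entire content of this lemma, goes wrong in two places whose errors happen to cancel. First, the identity $(\rho_Y)_*\chi_Y=-\chi_Y$ is false: in fact $(\rho_Y)_*\chi_Y=+\chi_Y$. The quickest way to see this is that $\rho_Y$ fixes the geodesic $Y$ pointwise and $\chi_Y$ is tangent to $Y$ along $Y$, so $(\rho_Y)_*\chi_Y=\chi_Y$ at every point of $Y$; one can also compute directly that with $\rho_Y(z,t)=(-\bar z,-t)$ one gets $(\rho_Y)_*\chi_Y(w)=-\overline{\tfrac i2(1-\bar w^2)}=\tfrac i2(1-w^2)=\chi_Y(w)$. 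Your heuristic (``it interchanges the poles $\pm1$, hence conjugates the rotation group to its inverse'') overlooks that the interchange is effected by a \emph{reflection} of $\S^2$ in the plane perpendicular to the rotation axis: the reversal of the axis and the reversal of the ambient orientation cancel, so conjugation by $z\mapsto-\bar z$ \emph{commutes} with the rotations about $\pm1$.

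Second, the compensating error concerns the cycle. $F_n$ is the flux over a single curve $C_n$, the graph of $f_n$ over $C(p_1,\varepsilon)$ (one of the two sheets), not over all of $\partial T_n$; the flux over $\partial T_n$ vanishes trivially because $\partial T_n$ bounds $T_n$, and no symmetry is needed for that, but it is not $F_n$. For the single-sheet cycle, Proposition \ref{proposition*1}(4) says $(\rho_Y)_*$ acts on $H_1$ by $-1$, so $[\rho_Y(C_n)]=-[C_n]$; likewise each waist circle is preserved by $\rho_Y$ only as a set---$\rho_Y$ restricts to it as a reflection fixing its two points on $Y$ and \emph{reverses} its orientation---contrary to your assertion that these cycles are carried to themselves as oriented $1$-cycles. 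This homological reversal, not a sign change of $\chi_Y$, is the true source of the cancellation, and it is what the paper uses: $[C_n]+[\rho_Y(C_n)]=0$, so $C_n\cup\rho_Y(C_n)$ bounds a compact piece of $M_n$ and the total flux is $0$; since $(\rho_Y)_*\chi_Y=\chi_Y$, the flux over $\rho_Y(C_n)$ (with the boundary orientation) equals $F_n$, whence $2F_n=0$. As written, your argument reaches the right conclusion only because two incorrect signs cancel, and the ``delicate orientation bookkeeping'' you flag is asserted rather than carried out.
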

\begin{proof}
Let $C_n$ be the graph of $f_n$ restricted to the circle $C(p_1,\varepsilon)$.
By Proposition \ref{proposition*1}, statement (4),
$C_n$ together with its image $\rho_Y (C_n)$
bound a compact region in $M_n$.  Thus the flux of the Killing field $\chi_Y$ on 
$C_n \cup \rho_Y (C_n)$ is $0$.   By $\rho_Y$-symmetry,
this flux is twice the flux $F_n$ of $\chi_Y$ on $C_n$.
Thus $F_n=0$.
\end{proof}

\medskip

On the other
hand, $F_n$ can be computed using Proposition \ref{proposition-flux2} from Appendix
\ref{appendix-flux}:
\begin{align}F_n&=-\Im\int_{C(p_1,\varepsilon)}
2\left(\frac{\partial}{\partial z}\left(\frac{t_n}{2\pi}\arg z-u_n\right)\right)^2\frac{i}{2}(1-z^2)dz+O(t_n^4)\nonumber\\
&=-\Re\int_{C(p_1,\varepsilon)}
\left(\frac{t_n}{4\pi i z} - u_{n,z}\right)^2 (1-z^2) dz+O(t_n^4)\nonumber\\
&=-\Re\int_{C(p_1,\varepsilon)} \left( \frac{-t_n^2}{16\pi^2 z^2}
-\frac{2 t_n}{4\pi i z} u_{n,z}+(u_{n,z})^2\right)(1-z^2)dz+O(t_n^4)\nonumber\\
\label{eq-integral}
&=\Re\int_{C(p_1,\varepsilon)} \left( 
\frac{2 t_n}{4\pi i z} u_{n,z}-(u_{n,z})^2\right)(1-z^2)dz+O(t_n^4)
\end{align}
The second equation comes from $\frac{\partial}{\partial z}\arg z=\frac{1}{2iz}$.
The fourth equation is a consequence of the fact that $\frac{1-z^2}{z^2}$ has no residue at
$p_1\neq 0$.
The first term in \eqref{eq-integral} (the cross-product) is a priori the leading term.
However we can prove that this term can be neglected:
\begin{proposition}
\label{proposition3}
\begin{equation}
\label{equation*9}
\lim\left(\frac{\log t_n}{t_n}\right)^2 F_n=-\Re\int_{C(p_1,\varepsilon)} (\wtu_z)^2(1-z^2)dz
\end{equation}
where $\wtu$ is defined in  \eqref{equation-limit-un} as the limit of $\frac{|\log t_n|}{t_n}u_n$.
\end{proposition}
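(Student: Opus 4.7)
The plan is to substitute $u_n=(t_n/|\log t_n|)\wtu_n$ into each of the three terms of \eqref{eq-integral} after multiplication by $(\log t_n/t_n)^2$, and treat them separately. The error contributes $(\log t_n/t_n)^2\cdot O(t_n^4)=O(t_n^2\log^2 t_n)\to 0$. The squared term becomes $-\Re\int_{C(p_1,\varepsilon)}(\wtu_{n,z})^2(1-z^2)\,dz$, and for $\varepsilon$ small enough that $C(p_1,\varepsilon)$ lies in a compact subset of $\CC\setminus\{\pm p_j\}_{j=1}^{m}$, the smooth convergence $\wtu_{n,z}\to\wtu_z$ from Proposition~\ref{proposition1} gives convergence to $-\Re\int_{C(p_1,\varepsilon)}(\wtu_z)^2(1-z^2)\,dz$, the asserted right-hand side. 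Everything therefore reduces to showing that the cross term
$$|\log t_n|\cdot \Re\int_{C(p_1,\varepsilon)}\frac{1-z^2}{2\pi i z}\,\wtu_{n,z}\,dz$$
tends to $0$.

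The first key observation is that $\wtu$ is harmonic on $\CC$ away from the $\pm p_j$, so $\wtu_z$ is meromorphic there; inside $D(p_1,\varepsilon)$ the only pole is a simple pole at $p_1$, arising from the $c_1 h_{p_1}$ term in \eqref{equation-limit-un}. A direct residue calculation yields
$$\int_{C(p_1,\varepsilon)}\frac{1-z^2}{2\pi i z}\,\wtu_z\,dz \;=\; -\frac{c_1(1-p_1^2)}{2p_1},$$
which is \emph{purely imaginary}, because $p_1=iy_1\in i\R^+$ makes $(1-p_1^2)/p_1=(1+y_1^2)/(iy_1)$ purely imaginary. Hence the limit of the cross-term integral has vanishing real part. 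However, naive smooth convergence only gives $\Re\int\frac{1-z^2}{2\pi i z}\wtu_{n,z}\,dz = o(1)$, not $o(1/|\log t_n|)$; the main obstacle is to upgrade this rate enough to kill the $|\log t_n|$ prefactor.

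The upgrade plan combines Stokes' theorem with the minimal surface equation. Writing $\phi(z)=(1-z^2)/(2\pi i z)$, which is holomorphic on $D(p_1,\varepsilon)$, and applying Stokes on the annulus $A_n=D(p_1,\varepsilon)\setminus D(p_{1,n},\delta_n)$ for an intermediate scale $r_{1,n}\ll \delta_n\ll\varepsilon$,
$$\int_{C(p_1,\varepsilon)}\phi\,\wtu_{n,z}\,dz \;=\; \int_{C(p_{1,n},\delta_n)}\phi\,\wtu_{n,z}\,dz \;+\; \frac{i}{2}\iint_{A_n}\phi\,\Delta\wtu_n\,dA.$$
The minimal surface equation satisfied by $f_n=(t_n/2\pi)\arg z-u_n$ in the metric \eqref{equation-metric-S2xR} linearizes to $\Delta f_n=0$ with corrections quadratic in $\nabla f_n$ and $\nabla^2 f_n$; since $\arg z$ is harmonic, combining this with gradient bounds on $\wtu_n$ from Proposition~\ref{Setup} yields $|\Delta\wtu_n| = o(1/|\log t_n|)$ uniformly on $A_n$, so the area integral contributes $o(1/|\log t_n|)$. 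The inner boundary integral is evaluated using the catenoidal-neck asymptotics of Proposition~\ref{Setup}: on the scale $\delta_n$, $u_n$ is well-approximated by the height function of a rescaled catenoid plus the logarithmic matching term, and the resulting $\phi$-weighted flux exactly reproduces the purely imaginary residue $-c_1(1-p_1^2)/(2p_1)$ with error $o(1/|\log t_n|)$ coming from the rate of convergence to the catenoid. Combining, the real part of the full cross-term integral is $o(1/|\log t_n|)$, so after multiplication by $|\log t_n|$ it tends to $0$, completing the proof.
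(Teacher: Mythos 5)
Your reduction is sound up to the cross term: the $O(t_n^4)$ error and the squared term are handled exactly as in the paper, and you correctly identify that everything hinges on showing $\Re\int_{C(p_1,\varepsilon)}\frac{1-z^2}{2\pi i z}\,\wtu_{n,z}\,dz=o(1/|\log t_n|)$, which is strictly stronger than the $o(1)$ that smooth convergence to the (purely imaginary) limiting residue provides. The gap is in how you obtain that rate. You propose to evaluate the inner boundary integral over $C(p_{1,n},\delta_n)$ by approximating $u_n$ there by a rescaled catenoid ``with error $o(1/|\log t_n|)$ coming from the rate of convergence to the catenoid.'' No such rate is available: statement i.\ of Proposition \ref{Setup} is a compactness assertion, and $c_1$ itself is defined only as a limit with no rate. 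Moreover, at the scale $\delta_n\gg r_{1,n}$ the surface is in the matching region, not the catenoidal region, and the real part of $\int\phi\,\wtu_{n,z}$ over that circle receives contributions from all higher Laurent modes of $\wtu_{n,z}$ (paired with $\phi',\phi'',\dots$), which your argument does not control. You also ignore clusters: several points $p_{j,n}$, $j\in J$, may converge to $p_1$ with mutual distances tending to $0$ at unknown rates, so a single inner circle $C(p_{1,n},\delta_n)$ either fails to enclose them all or cuts through the excluded disks where $u_n$ is not defined.

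The paper closes this gap with an exact algebraic cancellation valid for each fixed $n$, not an asymptotic one. Decomposing $g_n=u_{n,z}$ by the Cauchy--Pompeiu/Laurent formula (Proposition \ref{proposition-laurent}) on $D(p_1,\varepsilon)\setminus\bigcup_{j\in J}D(p_{j,n},t_n^{\alpha})$, the coefficient $a_{j,1}=\frac{1}{2\pi i}\int u_{n,z}\,dz$ is \emph{exactly real} because $u_n$ is a real, single-valued function (Proposition \ref{proposition-real-residue}); since $p_{j,n}$ is purely imaginary, $\Re\bigl(\frac{2\pi i}{2ip_{j,n}}(1-p_{j,n}^2)a_{j,1}\bigr)=0$ identically, so the leading singular term contributes nothing to the real part for every $n$ --- no convergence rate is needed. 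The remaining pieces (the $\overline z$-derivative area term and the coefficients $a_{j,k}$, $k\ge2$) are then controlled by crude power-law bounds $|a_{j,k}|\le Ct_n^{1+(k-1)\alpha}$ coming from the Schauder estimate $|u_{n,z}|\le Ct_n^{1-\alpha}$ (Proposition \ref{proposition-schauder}), evaluated on the \emph{fixed outer} circle $C(p_1,\varepsilon)$ where the tail of the series decays geometrically. To repair your proof you would need to replace the catenoid-approximation step by this exact reality of the residues at each $p_{j,n}$, $j\in J$, together with quantitative control of the higher modes.
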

This proposition is proved in Section \ref{section23} using a Laurent series expansion to estimate the first term in \eqref{eq-integral}.

\medskip

Assuming these results, we now prove
\begin{proposition}
\label{proposition-case1}
Case 1 is impossible.
\end{proposition}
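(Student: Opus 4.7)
The plan is to combine the two evaluations of the horizontal flux on the circle $C(p_1,\varepsilon)$. By Lemma \ref{lemma*8}, $F_n=0$ for every $n$, so Proposition \ref{proposition3} forces the identity
\[
\Re\int_{C(p_1,\varepsilon)} (\wtu_z)^2(1-z^2)\,dz \;=\; 0.
\]
I would evaluate this integral by residues using the explicit form of $\wtu$ given by Proposition \ref{proposition1} and show that, in Case 1, the real part is in fact strictly negative, producing the desired contradiction.

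Differentiating $\wtu = c_0\arg z + \sum_i c_i h_{p_i}$, one sees that $\wtu_z$ is meromorphic on $\CC$ with simple poles at the $p_i$ and $\overline{p_i}$. A Taylor expansion of $(\log z - \log p_1)^{-1}$ in $w=z-p_1$ shows that near $p_1$ the principal part of $\wtu_z$ is $-c_1/(2(z-p_1))$, while the regular value $G(p_1)$ is an explicit linear combination of $c_0$, $c_1$, and the $c_j$ for $j\geq 2$, involving the differences $\log p_1 - \log p_j$ and $\log p_1-\log\overline{p_j}$. The residue of $(\wtu_z)^2(1-z^2)$ at $p_1$ is therefore $-c_1^2 p_1/2 - c_1(1-p_1^2) G(p_1)$. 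Taking real parts and substituting $p_1=iy_1$, $L_j=\log y_1-\log y_j$, the computation collapses to
\[
-\Re\int_{C(p_1,\varepsilon)} (\wtu_z)^2(1-z^2)\,dz \;=\; \frac{\pi(1+y_1^2)}{2y_1}\Bigl[\,c_1^2\,\frac{1-y_1^2}{1+y_1^2} \,+\, \sum_{j=2}^m c_1 c_j\, f(y_1,y_j)\,\Bigr],
\]
with $f$ exactly as in Section \ref{section-physics}. This is precisely the ``force'' on the particle $p_1$ predicted by the physical heuristic.

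In Case 1 we have $0<y_1<1$, so the self-term $(1-y_1^2)/(1+y_1^2)$ is strictly positive; and $y_1<y_j$ for every $j\geq 2$ forces $L_j<0$ and hence $f(y_1,y_j)>0$. By Proposition \ref{proposition2} all $c_i$ are strictly positive, so the bracketed expression is a sum of non-negative terms with the first strictly positive. Thus $-\Re\int>0$, contradicting the identity above; Case 1 is ruled out.

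The main technical obstacle is the bookkeeping in the residue step: one must Taylor-expand $(\log z-\log p_1)^{-1}$ far enough to extract $G(p_1)$, verify that the $c_0\arg z$ contribution to $G(p_1)$ is purely real (and hence invisible to $\Re\int$, which only sees $\Im G(p_1)$), and group cluster contributions so that the final formula involves only the distinct limits $p_1,\ldots,p_m$ with $c_i$ equal to the total flux in the $i$-th cluster.
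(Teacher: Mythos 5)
Your proposal is correct and follows essentially the same route as the paper: $F_n=0$ from Lemma \ref{lemma*8}, Proposition \ref{proposition3} to reduce to the residue of $(\wtu_z)^2(1-z^2)$ at $p_1$, and then the positivity of all terms in the resulting bracket (using Proposition \ref{proposition2} and $0<y_1<1$, $y_1<y_j$). Your residue bookkeeping, with the exact simple pole $-c_1/(2(z-p_1))$ split off and the $c_1/(4p_1)$ correction absorbed into $G(p_1)$, reproduces the paper's formula \eqref{equation*10} exactly.
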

Proof:
According to Lemma \ref{lemma*8}, the flux $F_n$ is zero.
Hence the limit in \eqref{equation*9} is zero. We compute that limit and show that it is nonzero.
\medskip

Differentiating equation \eqref{equation-limit-un}, we get
$$\wtu_z=
\frac{c_0}{2iz}-\sum_{i=1}^m\frac{c_i}{2z}\left(\frac{1}{\log z-\log p_i}-\frac{1}{\log z-\log\overline{p_i}}\right).$$
Therefore,
\begin{eqnarray*}
\lefteqn{\Res_{p_1}(\wtu_z)^2(1-z^2)} \\
&=&
\Res_{p_1}\frac{1-z^2}{4z^2}\left[\frac{c_1^2}{(\log z-\log p_1)^2}
+2\frac{c_1}{\log z-\log p_1}
\right.\\
&&\left.\left(\frac{-c_0}{i}-\frac{c_1}{\log z-\log \overline{p_1}}
+\sum_{i=2}^m \frac{c_i}{\log z-\log p_i}-\frac{c_i}{\log z-\log\overline{p_i}}\right)\right]\\
&=& -\frac{c_1^2(1+p_1^2)}{4p_1}+\frac{c_1(1-p_1^2)}{2p_1}\left(
\frac{-c_0}{i}-\frac{c_1}{\log p_1-\log \overline{p_1}}\right.\\
&&\left.+\sum_{i=2}^m\frac{c_i}{\log p_1-\log p_i}-\frac{c_i}{\log p_1-\log \overline{p_i}}\right).
\end{eqnarray*}
(See Proposition \ref{proposition-residue} in Appendix \ref{appendix-residue}
for the residue computations.)
Write $p_j=i y_j$ for $1\leq j\leq m$ so all $y_j$ are positive numbers.
By Lemma \ref{lemma*8}, equation \eqref{equation*9} and the Residue Theorem,

\begin{eqnarray}
0&=&-\Re\int_{C(p_1,\varepsilon)}(\wtu_z)^2(1-z^2)dz\nonumber
\\
&=&-\Re\left[ 2\pi i \frac{y_1^2+1}{4iy_1}\left(c_1^2\frac{y_1^2-1}{y_1^2+1}
+2 c_1 \left(-\frac{c_0}{i}-\frac{c_1}{i\pi}
\right. \right. \right.\nonumber\\
& &\left. \left. \left.
+\sum_{i=2}^m\frac{c_i}{\log y_1-\log y_i}-\frac{c_i}{\log y_1-\log y_i+i\pi}
\right)\right)\right]\nonumber\\
\label{equation*10}
&=&\frac{\pi(y_1^2+1)}{2y_1}\left[
c_1^2\frac{1-y_1^2}{y_1^2+1}+\sum_{i=2}^m \frac{-2\pi^2\,c_1 c_i}{(\log y_1-\log y_i)|\log y_1-\log y_i+i\pi|^2}\right].
\end{eqnarray}
Now $y_1<1$ and $y_1<y_i$ for all $i\geq 2$, so all terms in \eqref{equation*10} are positive.
This contradiction proves Proposition \ref{proposition-case1}.\cqfd
\begin{remark}
The bracketed term in \eqref{equation*10} is precisely the expression for the force $F_1$ in
Section \ref{section-physics}.
\end{remark}
\subsection{Barriers}
\label{section21}
In this section we introduce various barriers that will be used to prove Proposition
\ref{proposition1}.
Fix some $\alpha\in(0,1)$. 
\begin{definition}
\label{definition*11}
$A_n$ is the set of points
$(z,\arg z)$ in $\CC$ which satisfy $t_n^{\alpha}<|z|<1$ and $\arg z>0$,
minus the disks $D(p_{i,n},t_n^{\alpha})$ for $1\leq i\leq N$.
\end{definition}
It is clear that $A_n\subset\Omega_n$ for large $n$, since $t_n^{\alpha}\gg t_n$.
Moreover, if $z\in A_n$ then $d(z,\partial\Omega_n)\geq t_n^{\alpha}/2$.

\begin{remark} We work in the hemisphere $|z|\leq 1$ where the conformal factor
of the spherical metric in \eqref{equation-metric-S2}
satisfies $1\leq\lambda\leq 2$. Hence euclidean and spherical
distances are comparable. We will use euclidean distance. Also the euclidean and
spherical Laplacians are comparable. The symbol $\Delta$ will mean euclidean Laplacian.

\medskip

By the disk $D(p,r)$ in $\CC$ (for small $r$) we mean the points
$(z,\arg z)$ such that $|z-p|<r$ and $\arg z$ is close to $\arg p$.
\end{remark}

Let $\delta$ be the function on $A_n$ defined by
$$\delta(z)=\left\{\begin{array}{ll}
\min\{|z|,|z-p_{1,n}|,\cdots,
|z-p_{N,n}| \}
 &\mbox{ if }0<\arg z<\pi\\
 |z| &\mbox{ if }\arg(z)\geq\pi
 \end{array}\right.$$

\begin{lemma} 
\label{lemma1}
There exists a constant $C_1$ such that
in the domain $A_n$, the function $u_n$ satisfies
$$|\Delta u_n|\leq C_1\frac{t_n^3}{\delta^4}$$.
\end{lemma}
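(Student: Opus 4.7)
The approach is to derive the exact PDE satisfied by $u_n$ from the minimal surface equation and then bound the right hand side term by term. For a graph $t=F(z)$ in $(\S^2(1)\times\R,\,\lambda^2|dz|^2+dt^2)$, the minimal surface equation can be written (summing over $i,j\in\{1,2\}$) as
$$\Delta F = \frac{F_iF_jF_{ij}}{\lambda^2+|\nabla F|^2}-\frac{|\nabla F|^2\,\lambda_iF_i}{\lambda(\lambda^2+|\nabla F|^2)}.$$
For the half-helicoid $h(z)=\tfrac{t_n}{2\pi}\arg z$, a direct computation from $(\arg z)_x=-y/|z|^2$, $(\arg z)_y=x/|z|^2$ yields $h_ih_jh_{ij}\equiv 0$ and $\lambda_ih_i\equiv 0$, consistent with $h$ being minimal. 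Substituting $F=h-u_n$ into the MSE and using $\Delta h=0$, every product containing only $h$-derivatives cancels, and (using $\lambda_iF_i=-\lambda_iu_i$) one obtains
$$\Delta u_n = \frac{h_ih_ju_{ij}+2h_iu_jh_{ij}-2h_iu_ju_{ij}-u_iu_jh_{ij}+u_iu_ju_{ij}}{\lambda^2+|\nabla F|^2}\;-\;\frac{|\nabla F|^2\,\lambda_iu_i}{\lambda(\lambda^2+|\nabla F|^2)}.$$

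The second step is a pointwise bound on each summand. Direct computation gives $|\nabla h|\le Ct_n/|z|$ and $|\nabla^2 h|\le Ct_n/|z|^2$. Assuming (to be justified below) the interior derivative estimates
$$|\nabla u_n(z)|\le C\,t_n/\delta(z),\qquad |\nabla^2 u_n(z)|\le C\,t_n/\delta(z)^2,$$
and using $\delta(z)\le|z|$ together with the uniform lower bound $\lambda^2+|\nabla F|^2\ge\lambda^2\ge 1$ on the hemisphere $|z|\le 1$, each summand of the numerator of the first fraction is $\le Ct_n^3/\delta^4$. For instance, the ``worst'' cubic term satisfies $|u_iu_ju_{ij}|\le|\nabla u_n|^2|\nabla^2u_n|\le(Ct_n/\delta)^2(Ct_n/\delta^2)=Ct_n^3/\delta^4$, and the mixed terms are similar using $|z|\ge\delta$. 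The second fraction is bounded by $C|\nabla F|^2|\nabla u_n|\le C(t_n^2/\delta^2)(t_n/\delta)=Ct_n^3/\delta^3\le Ct_n^3/\delta^4$ since $\delta(z)\le 1$ on $A_n$. Summing these bounds produces $|\Delta u_n|\le C_1t_n^3/\delta^4$.

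It remains to justify the interior estimates on $\nabla u_n$ and $\nabla^2 u_n$. For $z\in A_n$, the ball $B_{\delta(z)/4}(z)$ lies in $\Omega_n$ because every removed disk of $\Omega_n$ has radius at most $\lambda t_n$, which is negligible compared with $\delta(z)\ge t_n^\alpha$ for $n$ large. On this ball, Statement~$ii.a$ of Proposition~\ref{Setup} asserts that $M_n$ is a graph of slope less than $\epsilon$, so the quasilinear MSE for $u_n$ is uniformly elliptic with coefficients bounded independently of $n$ (the small-slope bound keeps $\lambda^2+|\nabla F|^2$ bounded away from $0$, and $\lambda$, $\nabla\lambda$ are uniformly smooth on $\{|z|\le 1\}$). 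Combined with $|u_n|\le t_n/2$ from \eqref{equation*4}, the rescaling $\tilde u(w):=u_n\bigl(z+\tfrac{\delta(z)}{2}w\bigr)/t_n$ takes values in $[-1/2,1/2]$ on $B_1\subset\R^2$ and satisfies a uniformly elliptic equation with uniformly bounded coefficients; standard interior $C^2$-estimates give $\|\tilde u\|_{C^2(B_{1/2})}\le C$, and unscaling yields the claimed derivative bounds. The main technical point is precisely this last step: verifying that after rescaling, the PDE for $\tilde u$ remains uniformly elliptic with uniformly bounded coefficients as $n\to\infty$, which is exactly where the small-slope conclusion of Proposition~\ref{Setup} enters in an essential way.
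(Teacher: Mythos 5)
Your argument is essentially the paper's, executed with the roles of $f_n$ and $u_n$ interchanged, and it is correct modulo one step that you assert rather than prove. The paper observes that $\arg z$ is harmonic, so $|\Delta u_n|=|\Delta f_n|$, and then applies an interior estimate (Proposition \ref{proposition-schauder}) directly to $f_n$, which satisfies the minimal surface equation \eqref{msecoord}; this avoids deriving the quasilinear equation for $u_n$ at all. The price the paper pays is a two-case split over the sheets of $\CC$, because $f_n$ is only $O(t_n)$ after subtracting $\frac{k}{2}t_n$ on the $k$-th sheet; your use of the global bound $|u_n|\le t_n/2$ from \eqref{equation*4} sidesteps that split, and your term-by-term bookkeeping (using $\delta\le|z|\le 1$, $|\nabla h|\le Ct_n/|z|$, $|\nabla^2 h|\le Ct_n/|z|^2$, $1\le\lambda\le 2$) is fine --- the misplaced factor of $\lambda^2$ in your lower-order term is harmless. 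The one genuine soft spot is the sentence ``standard interior $C^2$-estimates give $\|\tilde u\|_{C^2(B_{1/2})}\le C$.'' The Schauder estimate requires the coefficients $a^{ij}$ to be uniformly H\"older continuous, but for a quasilinear equation the coefficients depend on $\nabla F$, whose modulus of continuity is precisely what you are trying to control; uniform ellipticity with merely bounded coefficients does not yield $C^2$ bounds. The paper closes this loop by first invoking Theorem 12.4 of Gilbarg--Trudinger --- the interior H\"older \emph{gradient} estimate for quasilinear elliptic equations in two variables, which needs only the sup bound on the solution --- to control $[Df]_\alpha$, and only then applies Schauder. You should cite that (or an equivalent two-dimensional $C^{1,\alpha}$ estimate) explicitly; with it, your rescaling argument goes through, gives $|\nabla u_n|\le Ct_n/\delta$ and $|\nabla^2 u_n|\le Ct_n/\delta^2$, and the lemma follows as you say.
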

Proof: the function $f_n(z)=\frac{t_n}{2\pi}\arg z-u_n(z)$ satisfies the minimal surface
equation, and $|\Delta f_n|=|\Delta u_n|$.
The proposition then follows from Proposition \ref{proposition-schauder} in Appendix \ref{appendix-schauder} (a straightforward application of Schauder estimate). More precisely:
\begin{itemize}
\item if $0<\arg z<\pi$, we apply
Proposition \ref{proposition-schauder} on the domain
$$A'_n=\{w\in\Omega_n\,:\,-\pi/2<\arg w<3\pi/2, |w|<2\}.$$
The distance $d(z,\partial A'_n)$ is  comparable
to $\delta(z)$. The function $f_n$ is bounded
by $3t_n/4$.
\item If $k\pi\leq\arg z<k\pi+\pi$ for some $k\geq 1$, we apply Proposition \ref{proposition-schauder} to the function $f_n-\frac{k}{2}t_n$ and the domain 
$$A'_n=\{w\in\Omega_n\,:\, k\pi-\pi/2<\arg w<k\pi+3\pi/2,|w|<2\}.$$
 The distance $d(z,\partial A'_n)$
 is comparable to $|z|$. The function $f_n-\frac{k}{2}t_n$ is again bounded by
$3t_n/4$.
\end{itemize}
\cqfd

Next, we need to construct a function whose Laplacian is greater than $1/\delta^4$,
in order
to compensate for the fact that $u_n$ is not quite harmonic.
Let $\chi:\R^+\to[0,1]$ be a fixed, smooth function such that $\chi\equiv 1$ on
$[0,\pi]$ and $\chi\equiv 0$ on $[2\pi,\infty)$.
\begin{lemma}
\label{lemma2}
There exists a constant $C_2\geq 1$ such that the function $g_n$ defined on $A_n$ by
$$g_n(z)=\frac{C_2}{|z|^2}+\chi(\arg z)
\sum_{i=1}^N
\frac{1}{|z-p_{i,n}|^2}$$
satisfies
\begin{equation}
\label{equation*12}
\Delta g_n\geq \frac{4}{\delta^4}.
\end{equation}
Moreover $\partial g_n/\partial \nu\leq 0$ on $|z|=1$ and
\begin{equation}
\label{equation*13}
g_n\leq \frac{C_2+
N
}{t_n^{2\alpha}}\quad\mbox{ in } A_n.
\end{equation}
\end{lemma}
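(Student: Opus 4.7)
My plan is to establish each of the three claims by a direct computation, splitting according to the angular sectors distinguished by the cutoff $\chi(\arg z)$. Throughout I will use the elementary pointwise identity $\Delta(1/|w|^2)=4/|w|^4$, a one-line polar-coordinate computation, applied both with $w=z$ and with $w=z-p_{i,n}$.

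In the sector $\arg z\in(0,\pi)$ the cutoff equals $1$, so
$$\Delta g_n \;=\; \frac{4C_2}{|z|^4} + \sum_{i=1}^{N}\frac{4}{|z-p_{i,n}|^4} \;\ge\; \frac{4}{\delta^4},$$
because $1/\delta^4$ equals the maximum of the nonnegative terms on the right and $C_2\ge 1$. In the outer sector $\arg z\ge 2\pi$ the cutoff vanishes identically and the bound reduces to $\Delta g_n = 4C_2/|z|^4\ge 4/|z|^4=4/\delta^4$. The delicate case is the transition sector $\arg z\in[\pi,2\pi]$, where $\delta=|z|=r$ and
$$\Delta(\chi h)=\chi\,\Delta h + \frac{2}{r^2}\chi'(\theta)\,h_\theta + \frac{1}{r^2}\chi''(\theta)\,h$$
with $h=1/|z-p_{i,n}|^2$. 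The geometric fact that the projection of $z$ lies in the closed lower half-plane while $p_{i,n}$ lies on the positive imaginary axis yields
$$|z-p_{i,n}|^2 \;=\; r^2+|p_{i,n}|^2-2r|p_{i,n}|\sin\theta \;\ge\; r^2+|p_{i,n}|^2,$$
from which a short case split ($|p_{i,n}|\le r$ versus $|p_{i,n}|>r$) bounds each of the two possibly-bad terms in the display above by $C/r^4$ with $C$ depending only on $\chi$. Choosing $C_2\ge 1+NC/4$ absorbs them and delivers $\Delta g_n\ge 4/\delta^4$.

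For the Neumann condition on $|z|=1$, the idea is to group $p_{i,n}$ with $p_{N+1-i,n}=1/\overline{p_{i,n}}$ using the $\mu_E$-symmetry \eqref{equation**}. Setting $a=\Re(e^{-i\theta}p)$ and using the on-circle identity $|z-1/\overline{p}|^2=|z-p|^2/|p|^2$, a short differentiation yields
$$\partial_r\!\left(\frac{1}{|z-p|^2}+\frac{1}{|z-1/\overline{p}|^2}\right)\!\bigg|_{r=1}\;=\;\frac{-2\bigl[\,1+|p|^4-a(1+|p|^2)\,\bigr]}{|z-p|^4}.$$
Since $a\le|p|$, elementary algebra gives $1+|p|^4-a(1+|p|^2)\ge (1-|p|)^2(1+|p|+|p|^2)\ge 0$, so each paired contribution is nonpositive; the self-paired middle point (at $p=i$ when $N$ is odd, by \eqref{equation**}) gives the same conclusion. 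Together with $\partial_r(C_2/r^2)|_{r=1}=-2C_2<0$ and $\chi\ge 0$, this yields $\partial g_n/\partial\nu\le 0$ on $|z|=1$.

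For the upper bound, $|z|\ge t_n^\alpha$ on $A_n$ gives $C_2/|z|^2\le C_2/t_n^{2\alpha}$. Each remaining term $1/|z-p_{i,n}|^2$ (relevant only where $\chi>0$, i.e.\ $\arg z\in[0,2\pi]$) is bounded by $1/t_n^{2\alpha}$: either $\arg z$ is near $\pi/2$ and the defining exclusion $z\notin D(p_{i,n},t_n^\alpha)$ directly gives $|z-p_{i,n}|\ge t_n^\alpha$, or one is in the transition sector, where $|z-p_{i,n}|\ge |p_{i,n}|$ combined with the standing hypothesis $p_1\ne 0$ (which forces all $|p_{i,n}|$ to stay bounded away from $0$ for large $n$) eventually exceeds $t_n^\alpha$. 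Summing gives $g_n\le (C_2+N)/t_n^{2\alpha}$. The main obstacle throughout is the transition sector: without the sharp inequality $|z-p_{i,n}|^2\ge r^2+|p_{i,n}|^2$, the cross-terms in $\chi$ could not be absorbed into the $C_2/|z|^2$ contribution and $C_2$ would need to depend on the configuration rather than only on $N$ and $\chi$.
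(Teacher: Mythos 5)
Your proof is correct, and for the two displayed inequalities it follows the paper's own route: the same three-sector split in $\arg z$, the identity $\Delta(1/|w|^2)=4/|w|^4$, the lower bound $|z-p_{i,n}|\ge |z|$ in the transition sector $\arg z\in[\pi,2\pi]$ (where the paper likewise bounds the $\chi'$ and $\chi''$ cross-terms by $C/r^4$ and absorbs them into $C_2$), and the observation that \eqref{equation*13} comes from $|z|>t_n^\alpha$ together with the excluded disks (your extra care about the sheet $\arg z\in[\pi,2\pi]$, where the disks are not removed, is a worthwhile clarification the paper glosses over).

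Where you genuinely add something is the Neumann condition. The paper's proof of the lemma simply does not address $\partial g_n/\partial\nu\le 0$ on $|z|=1$, and it is not a term-by-term triviality: by \eqref{equation**} roughly half the points satisfy $|p_{i,n}|>1$, and for such a point the radial derivative of $1/|z-p_{i,n}|^2$ at $r=1$ equals $-2(1-|p_{i,n}|\sin\theta)/|z-p_{i,n}|^4$, which is positive near $\theta=\pi/2$ and cannot be absorbed by any fixed $C_2$ when $|p_{i,n}|$ is close to $1$. Your pairing of $p_{i,n}$ with $p_{N+1-i,n}=1/\overline{p_{i,n}}$, the on-circle identity $|z-1/\overline p|^2=|z-p|^2/|p|^2$, and the factorization $1+|p|^4-a(1+|p|^2)\ge(1-|p|)^2(1+|p|+|p|^2)$ for $a\le|p|$ give a clean and correct proof of this part (with the self-paired point $i$ handled separately); this is exactly the symmetry that makes the claim true, and it is the one place where your write-up supplies an argument the paper leaves implicit.
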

Proof:
The inequality \eqref{equation*13} follows immediately from the definitions of $g_n$
and $A_n$.
The function $f$ defined in polar coordinate by $f(r,\theta)=1/r^2$ satisfies
$$|\nabla f|=\frac{2}{r^3},\qquad \Delta f=\frac{4}{r^4}.$$
Hence for $\arg z\geq 2\pi$, \eqref{equation*12} is satisfied for any $C_2\geq 1$.
Suppose $0<\arg z <\pi$. Then
$$\Delta g_n=\frac{4C_2}{|z|^4}+
\sum_{i=1}^N
\frac{4}{|z-p_{i,n}|^4}\geq \frac{4}{\delta^4}$$
so again, \eqref{equation*12} is satisfied for any $C_2\geq 1$.
If $\theta=\arg z\in [\pi,2\pi]$, we have $|z-p_{i,n}|\geq |z|=r$ and
$$|\nabla \chi(\arg z)|\leq \frac{C}{r},\qquad |\Delta\chi(\arg z)|\leq \frac{C}{r^2}.$$
Hence
$$\left|\Delta \frac{\chi(\arg z)}{|z-p_{i,n}|^2}\right|\leq
\frac{C}{r^2}\frac{1}{r^2}+2\frac{C}{r}\frac{2}{r^3}+\frac{4}{r^4}.$$
Therefore, $\Delta g_n\geq 4/r^4$ provided $C_2$ is large enough.
(The constant $C_2$ only depends on $N$ and a bound on $\chi'$ and $\chi''$.)
This completes the proof of \eqref{equation*12}.
\cqfd
\medskip

We need a harmonic function on $\CC$ that is greater than $|\log t|$ on
$|z|=t$. A good candidate would be $-\log |z|$. However this function has the wrong 
Neumann data on the unit circle. We propose the following:
\begin{lemma}
\label{lemma3}
For $0<t<1$, the harmonic function $H_t(z)$ defined for $z\in\CC$, $\arg z>0$ by
$$H_t(z)=\Im\left(\frac{\log t\log z}{\log t +i\log z}\right)$$
has the following properties :
\begin{enumerate}
\item $H_t(z)>0$ if $\arg z>0$,
\item $H_t(1/\overline{z})=H_t(z)$, hence $\partial H_t/\partial\nu=0$ on $|z|=1$,
\item $H_t(z)\geq |\log t|/2$ if $|z|=t$,
\item for fixed $t$, $H_t(z)\geq |\log t|/2$ when $\arg z\to\infty$, uniformly with respect to $|z|$
in $t\leq |z|\leq 1$,
\item for fixed $z$, $H_t(z)\to \arg z$ when $t\to 0$,
\item $H_t(z)\leq |\log z|$ if $\arg z>0$.
\end{enumerate}
\end{lemma}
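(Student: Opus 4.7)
The plan is to reduce all six properties to one explicit formula, obtained by rationalizing the defining expression, and then verify each property by elementary algebra. Write $w = \log z = u + iv$, so that $u = \log|z|$ and $v = \arg z$, and set $a = \log t < 0$. The Möbius map $\phi(w) = aw/(a + iw)$ has its only pole at $w = ia$, which lies in the lower half plane $\{\Im w < 0\}$ since $a < 0$; hence $\phi$ is holomorphic on the closed upper half plane, and $H_t = \Im \phi$ is harmonic on $\{\arg z > 0\}$. Rationalizing the denominator yields the key identity
$$H_t(z) = \frac{|a|(u^2 + v^2 - av)}{(a - v)^2 + u^2}.$$

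From this formula, property (1) is immediate: when $v > 0$ and $a < 0$, both $u^2 + v^2$ and $-av$ are nonnegative and the denominator is strictly positive. Property (2) holds because $z \mapsto 1/\overline{z}$ corresponds to $w \mapsto -\overline{w}$, i.e., to $u \mapsto -u$ with $v$ fixed, and the formula depends on $u$ only through $u^2$. For property (5), fix $z$ and divide numerator and denominator by $a^2$; as $a \to -\infty$ the expression tends to $v = \arg z$.

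A one-line algebraic manipulation produces the useful identity
$$H_t(z) - \frac{|a|}{2} = \frac{|a|(u^2 + v^2 - a^2)}{2[(a-v)^2 + u^2]},$$
so $H_t \geq |a|/2$ whenever $u^2 + v^2 \geq a^2$. On $|z| = t$ one has $u = a$, hence $u^2 = a^2$, giving (3). For (4), with $t$ fixed and $z$ restricted to $t \leq |z| \leq 1$ (so $u \in [a, 0]$), the condition $u^2 + v^2 \geq a^2$ is satisfied as soon as $v \geq |a| = |\log t|$, a threshold independent of $u$; this supplies the required uniform bound.

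For (6), use $H_t = \Im\phi \leq |\phi| = |a||w|/|a + iw|$. The inequality $|a + iw| \geq |a|$ follows from
$$|a + iw|^2 = (a-v)^2 + u^2 = a^2 + (u^2 + v^2 - 2av) \geq a^2,$$
since $u^2 + v^2 \geq 0$ and $-2av \geq 0$ when $a < 0$ and $v > 0$. Hence $|\phi(w)| \leq |w|$, i.e., $H_t(z) \leq |\log z|$. The whole argument is elementary; the only mildly subtle observation is that the single identity for $H_t - |a|/2$ handles both (3) and (4) at once, and the trivial bound $|\Im\phi| \leq |\phi|$ combined with a one-line estimate on $|a+iw|$ suffices for (6).
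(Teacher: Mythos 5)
Your proof is correct and follows essentially the same route as the paper: both pass to the explicit real expression for $H_t$ in the coordinates $(\log|z|,\arg z)$ and verify each item by elementary algebra, including the same bound $|\log t+i\log z|\geq|\log t|$ for point (6). Your identity for $H_t-|\log t|/2$ is a slightly tidier packaging of the paper's computations for points (3) and (4), but the substance is identical.
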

Proof : it suffices to compute $H_t(z)$ in polar coordinates
$z=r e^{i\theta}$ :
$$H_t(z)=\frac{(\log t)^2\theta+|\log t|((\log r)^2+\theta^2)}{(\log t-\theta)^2+(\log r)^2}.$$
The first two points follow.
If $r=t$ then
$$H_t(z)=\frac{|\log t|}{2}\left(1+\frac{\theta^2}{2(\log t)^2+2|\log t|\theta +\theta^2}\right)
\geq \frac{|\log t|}{2}$$
which proves point 3. If $t\leq r\leq 1$ then
$$H_t(z)\geq \frac{ (\log t)^2\theta+|\log t|\theta^2}{(\log t-\theta)^2+(\log t)^2}$$
which gives point 4. Point 5 is elementary. For the last point, write
$$\left|\frac{\log t\log z}{\log t+i\log z}\right|\leq \left|\frac{\log t\log z}{\log t}\right|=|\log z|.$$
\cqfd
\medskip

\subsection{Proof of Proposition \ref{proposition1}}
\label{section-proof-proposition1}
The function $\wtu_n$ defined in \eqref{equation*7} has the following properties in $A_n$:
\begin{equation}
\label{equation*14}
\begin{array}{ll}
\bullet & \displaystyle|\Delta \wtu_n|\leq  C_1 \frac{t_n^2|\log t_n|}{\delta^4}
\text{ by Lemma \ref{lemma1}},\\
\bullet & \wtu_n\leq |\log t_n|/2,\\
\bullet & \wtu_n=0 \text{ on $\arg z=0$,}\\
\bullet & \partial \wtu_n/\partial\nu=0 \text{ on $|z|=1$.}
\end{array}
\end{equation}
The last three properties follow from \eqref{equation*4} and the fact that $A_n\subset\Omega_n$.
Consider the barrier $v_n=v_{1,n}+v_{2,n}+v_{3,n}$ where
$$v_{1,n}(z)= - C_1 t_n^2|\log t_n| g_n(z)
+C_1(C_2+
N
)t_n^{2-2\alpha}|\log t_n|,$$
$$v_{2,n}(z)=\frac{1}{\alpha}\sum_{i=1}^N
h_{p_{i,n}}(z)
=-\frac{1}{\alpha}\sum_{i=1}^N\log\left|\frac{\log z-\log p_{i,n}}
{\log z-\log\overline{p_{i,n}}}\right|,$$
$$v_{3,n}(z)=\frac{1}{\alpha}H_{t_n^{\alpha}}(z).$$
The function $v_{1,n}$ is positive in $A_n$ by the estimate \eqref{equation*13} of
Lemma \ref{lemma2}.
Observe that the second term in the expression for $v_{1,n}$ tends to $0$ as $n\to\infty$ since $\alpha<1$.
The functions $v_{2,n}$ and $v_{3,n}$ are harmonic and positive in $A_n$
(see point (1) of Lemma \ref{lemma3} for $v_{3,n}$).
\medskip

By \eqref{equation*6} and the symmetry of the set $\{p_{1,n},\cdots,p_{N,n}\}$
(see \eqref{equation**}), 
the function $v_{2,n}$ satisfies $v_{2,n}(1/\overline{z})=v_{2,n}(z)$.
Hence $\partial v_{2,n}/\partial\nu=0$ on the unit circle.
By point (2) of Lemma \ref{lemma3}, $\partial v_{3,n}/\partial\nu=0$ on the unit circle.
Therefore by Lemma \ref{lemma2},
$$\frac{\partial v_n}{\partial\nu}=\frac{\partial v_{1,n}}{\partial\nu}\geq 0
\text{ on $|z|=1$.}$$
Because $p_{i,n}\to p_i\neq 0$, we have on the circle $C(p_{i,n},t_n^{\alpha})$
$$\log |\log z - \log p_{i,n}|\simeq \log |z-p_{i,n}|$$
Hence for large $n$ and for $1\leq i \leq N$
$$v_{2,n}\geq \frac{1}{2\alpha}|\log t_n^{\alpha}|\geq \wtu_n\quad \mbox{ on } C(p_{i,n},t_n^{\alpha}).$$
Using point (3) of Lemma \ref{lemma3} and the second statement of \eqref{equation*14},
we have
$v_{3,n}\geq \wtu_n$ on the boundary component
$|z|=t_n^{\alpha}$.
So we have
\begin{equation}
\label{equation*15}
\begin{array}{ll}
\bullet &\Delta \wtu_n\geq \Delta v_n \text{ in $A_n$},\\
\bullet &\wtu_n\leq v_n \text{ on the boundaries $\arg z=0$, $|z|=t_n^{\alpha}$ and
$C(p_{i,n},t_n^{\alpha})$,}\\
\bullet &\partial \wtu_n/\partial \nu\leq \partial v_n/\partial \nu\text{ on the boundary $|z|=1$},\\
\bullet &\wtu_n\leq v_n \text{ when $\arg z\to\infty$.}
\end{array}
\end{equation}
(The first statement follows from \eqref{equation*12}
and the first statement of \eqref{equation*14}.)

\medskip
By the maximum principle, we have $\wtu_n\leq v_n$ in $A_n$.

\medskip

For any compact set $K$ of the set $\{z\in\CC\,:\,|z|\leq 1,\arg z\geq 0\}\setminus\{p_1,\cdots,p_{m}\}$,
the function $v_n$ is bounded by $C(K)$ on $K$.
(For $v_{3,n}$, use the last point of Lemma \ref{lemma3}.)
Then by symmetry, $u_n$ is bounded by $C(K)$ on $K\cup\overline{K}\cup
\sigma(K)\cup\sigma(\overline{K})$, where $\sigma$ denotes the inversion
$z\mapsto \overline{z}$.
Let
$$\Omega_{\infty}=\lim_{n\to\infty}\Omega_n=\CC\setminus\{\pm p_1,\cdots,\pm p_m\}.$$
Then $\wtu_n$ is bounded on compact subsets of $\Omega_{\infty}$.
By standard PDE theory, passing to a subsequence, $\wtu_n$ has a limit $\wtu$.
The convergence is the uniform smooth convergence on compact subsets of $\Omega_{\infty}$.
The limit has the following properties
\begin{itemize}
\item $\wtu$ is harmonic in $\Omega_{\infty}$. This follows from the first point of \eqref{equation*14}.
\item $\wtu(\overline{z})=-\wtu(z)$ and $\wtu(1/\overline{z})=\wtu(z)$.
\item $\wtu(z)\geq 0$ if $\arg z\geq 0$.
\end{itemize}
Note that either $\wtu\equiv 0$ or $\wtu$ is positive in $\arg z>0$.
Using the fact that $\log:\CC\to\C$ is biholomorphic, the following lemma tells us that $\wtu$ has the form given by equation \eqref{equation-limit-un}.
\begin{lemma}
\label{lemma-positive-harmonic}
Let $H$ be the upper half plane $\Im z> 0$ in $\C$. Let $u$ be a positive harmonic function in $H\setminus\{q_1,\cdots,q_m\}$ with boundary value $u=0$ on $\R$.
Then there exists non-negative constants $c_0,\cdots,c_m$ such that
$$u(z)=c_0\Im z-\sum_{i=1}^m c_i\log\left|\frac{z-q_i}{z-\overline{q_i}}\right|.$$
\end{lemma}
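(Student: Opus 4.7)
The plan is to peel off the logarithmic singularities at each $q_i$ explicitly, then reduce to the classical result that the only positive harmonic functions on $H$ with zero continuous boundary values on $\R$ are the multiples of $\Im z$. First, at each $q_i$, since $u$ is a positive harmonic function on a punctured neighborhood of $q_i$, B\^{o}cher's theorem on isolated singularities of positive harmonic functions yields a constant $c_i \geq 0$ and a function $h_i$ harmonic near $q_i$ with $u(z) = -c_i \log|z - q_i| + h_i(z)$.

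I would then form the auxiliary function
$$w(z) = u(z) + \sum_{i=1}^m c_i \log\left|\frac{z - q_i}{z - \overline{q_i}}\right|,$$
which is harmonic on all of $H$: the singularity $-c_i \log|z - q_i|$ of $u$ at $q_i$ is canceled by the $+c_i \log|z - q_i|$ part of the correction, while the other part $-c_i \log|z - \overline{q_i}|$ is harmonic on $H$ since $\overline{q_i} \notin H$. On the real axis $|x - q_i| = |x - \overline{q_i}|$ and $u = 0$, so $w$ extends continuously to $0$ on $\R$. The correction terms are non-positive on $H$ (since $|z - q_i| \leq |z - \overline{q_i}|$ when $z, q_i \in H$) and decay as $O(1/|z|)$ at infinity, so $w \leq u$ on $H$ and $\liminf_{z \to \infty} w(z) \geq \liminf_{z \to \infty} u(z) \geq 0$. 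Combined with the linear growth of $\wtu$ inherited from the barriers $v_n$ of Section~\ref{section21}, the Phragm\'{e}n--Lindel\"{o}f principle in the half-plane yields $w \geq 0$ on $H$. Then the classical Herglotz representation for positive harmonic functions on $H$ with vanishing continuous boundary values gives $w(z) = c_0 \Im z$ for some $c_0 \geq 0$, and solving for $u$ yields the claimed formula.

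The main obstacle is confirming the linear growth of $w$ at infinity, which is required both for Phragm\'{e}n--Lindel\"{o}f (to conclude $w \geq 0$) and for Herglotz (to rule out non-canonical contributions such as $\Im P(z)$ for a higher-degree real polynomial $P$, which would otherwise be admissible harmonic functions vanishing on $\R$ after Schwarz reflection). In our setting this growth bound comes for free from the uniform barrier $\wtu_n \leq v_n$ constructed in Section~\ref{section21}, whose limit grows at most like $(1/\alpha)\arg z$; one can alternatively bypass the issue by invoking the Riesz decomposition for positive harmonic functions on $H$ with isolated interior singularities, which directly yields $u(z) = c_0 \Im z + \sum c_i G(z, q_i)$ with $G$ the Green's function of $H$.
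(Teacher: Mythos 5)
Your proof is correct and follows essentially the same route as the paper, which cites exactly these two ingredients (B\^ocher's theorem at each puncture, and the fact that a positive harmonic function on $H$ vanishing continuously on $\R$ equals $c_0\Im z$, \cite{HFT} Theorems 3.9 and 7.22) together with the maximum principle. The growth bound you worry about at the end is not actually needed: once you have $w=0$ on $\R$ and $\liminf_{|z|\to\infty}w\geq 0$ (which you already derived from $u>0$ and the decay of the Green's-function corrections, with no input from Section~\ref{section21}), the minimum principle on large half-disks gives $w\geq 0$, and the classical characterization then requires only positivity, not a growth rate.
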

This lemma easily follows from the following two facts and the maximum principle:
\begin{itemize}
\item If $u$ is a positive harmonic function in the punctured disk $D(p,\varepsilon)\setminus\{p\}$
then $u(z)=-c\log|z-p|+v(z)$ where $v$ is harmonic in the disk.
(B\^ocher Theorem, \cite{HFT}, Theorem 3.9)
\item If $u$ is a positive harmonic function in $H$ with boundary value $0$ on $\R$ then
$u=c\Im z$. (\cite{HFT}, Theorem 7.22).
\end{itemize}
\medskip

To conclude the proof of Proposition \ref{proposition1},
it remains to compute the numbers $c_i$ for $1\leq i\leq m$.
Recall that $\phi_{i,n}$ is the vertical flux of $M_n$ on the graph of $f_n$
restricted to the circle $C(p_i,\varepsilon)$.
By Proposition \ref{proposition-flux2},
$$\phi_{i,n}=\Im\int_{C(p_i,\varepsilon)} (2f_{n,z}+O(t_n^2))dz
=\Im\int_{C(p_i,\varepsilon)} (-2u_{n,z}+O(t_n^2))dz.$$
Now
$$\lim \frac{|\log t_n|}{t_n} u_n = - c_i\log|z-p_i|+\text{harmonic} \quad\text{ near $p_i$}.$$
$$\lim\frac{|\log t_n|}{t_n} 2u_{n,z}=- \frac{c_i}{z-p_i} +\text{holomorphic}\quad\text{ near $p_i$}.$$
Hence by the Residue Theorem,
$$\lim \frac{|\log t_n|}{t_n}\phi_{i,n}
= 2\pi c_i.$$
This finishes the proof of Proposition \ref{proposition1}.
\cqfd
\medskip

As a corollary of the proof of Proposition \ref{proposition1}, we have
an estimate of $u_n$ that we will need in Section \ref{section45}.
For convenience, we state it here as a lemma.
\medskip

Fix some $\beta\in(0,\alpha)$ and let $A'_n\subset A_n$ be the domain defined as $A_n$ in
Definition \ref{definition*11}, replacing $\alpha$ by $\beta$, namely:
$A'_n$ is the set of points
$(z,\arg z)$ in $\CC$ which satisfy $t_n^{\beta}<|z|<1$ and $\arg z>0$,
minus the disks $D(p_{i,n},t_n^{\beta})$ for $1\leq i\leq N$.
\begin{lemma}
\label{lemma5} Assume that $p_1\neq 0$. Then
for $n$ large enough (depending only on $\beta$ and a lower bound on $|p_1|$), we have
$$u_n\leq(N+2)\frac{\beta}{\alpha} t_n\quad \mbox{ in } A'_n.$$
\end{lemma}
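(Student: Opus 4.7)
The plan is to reuse the barrier $v_n = v_{1,n} + v_{2,n} + v_{3,n}$ already constructed in the proof of Proposition~\ref{proposition1}: by the maximum principle we have $\wtu_n \le v_n$ throughout $A_n$, and since $A'_n \subset A_n$ this inequality persists on $A'_n$. The task then reduces to sharpening the pointwise upper bound of each $v_{i,n}$ on the smaller domain $A'_n$, where singular points and the axis are excluded by the larger radius $t_n^\beta$ rather than $t_n^\alpha$.

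First I would note that $v_{1,n} \le C_1(C_2+N) t_n^{2-2\alpha} |\log t_n|$ tends uniformly to $0$ (since $\alpha < 1$) and so is negligible. Next, for the principal term $v_{2,n} = \frac{1}{\alpha}\sum_{i=1}^N h_{p_{i,n}}$: on $A'_n$ the condition $|z - p_{i,n}| \ge t_n^\beta$, combined with a uniform lower bound on the $|p_{i,n}|$ (coming from the hypothesized lower bound on $|p_1|$ and the ordering), implies $|\log z - \log p_{i,n}| \ge c\, t_n^\beta$ with $c>0$ depending only on that lower bound. Using the fact that $|\log z - \log \overline{p_{i,n}}|$ stays comparable to the numerator when $\arg z$ is large and is bounded below by a positive constant otherwise, one obtains $h_{p_{i,n}}(z) \le \beta|\log t_n| + O(1)$ uniformly on $A'_n$; summing the $N$ contributions gives $v_{2,n} \le \frac{N\beta}{\alpha}|\log t_n| + O(1)$.

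Third, for $v_{3,n} = \frac{1}{\alpha} H_{t_n^\alpha}$, I would apply point~(6) of Lemma~\ref{lemma3}, namely $H_{t_n^\alpha}(z) \le |\log z|$, together with $|\log|z|| \le \beta|\log t_n|$ on $A'_n$, to obtain $v_{3,n} \le 2(\beta/\alpha)|\log t_n|$ in the region where $\arg z \le \beta|\log t_n|$. In the complementary region of large $\arg z$, where this barrier becomes loose, I would combine the crude bound $H_{t_n^\alpha} \le \alpha|\log t_n|$ with the a priori constraint $\wtu_n \le |\log t_n|/2$ coming from $0 < u_n < t_n/2$, which is already dominated by $(N+2)(\beta/\alpha)|\log t_n|$ once lower-order corrections have been absorbed.

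Adding the three bounds yields $\wtu_n \le (N+2)(\beta/\alpha)|\log t_n| + o(|\log t_n|)$ throughout $A'_n$, and multiplying by $t_n/|\log t_n|$ converts this to $u_n \le (N+2)(\beta/\alpha)\, t_n$ for $n$ sufficiently large, where the threshold depends on $\beta$ and the lower bound on $|p_1|$ only through the $O(1)$ corrections that must be absorbed into $(\beta/\alpha)|\log t_n|$. The main obstacle is the $v_{3,n}$ estimate: the harmonic function $H_{t_n^\alpha}$ was calibrated to the inner boundary $|z| = t_n^\alpha$ of $A_n$, not to the enlarged inner boundary $|z| = t_n^\beta$ of $A'_n$, so the factor $\beta/\alpha$ does not appear automatically; it is the dichotomy between moderate and large values of $\arg z$, combined with the explicit formula for $H_t$ in Lemma~\ref{lemma3} and the trivial bound $\wtu_n\le|\log t_n|/2$, that makes the estimate go through.
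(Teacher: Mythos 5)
Your overall strategy is the paper's: keep the barrier inequality $\wtu_n\le v_{1,n}+v_{2,n}+v_{3,n}$ established in the proof of Proposition \ref{proposition1} (valid on all of $A_n\supset A'_n$) and re-estimate each term on $A'_n$. Your bounds for $v_{1,n}$ and $v_{2,n}$ match the paper's in substance (the paper controls $h_{p_{i,n}}$ by locating its maximum on the boundary of $A'_n$ rather than pointwise, but arrives at the same $v_{2,n}\le C+N\frac{\beta}{\alpha}|\log t_n|$).

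The genuine gap is in your treatment of $v_{3,n}$ where $\arg z$ is large. You are right that point (6) of Lemma \ref{lemma3} only gives $\frac{1}{\alpha}|\log z|\lesssim\frac{\beta}{\alpha}|\log t_n|$ when $\arg z\lesssim\beta|\log t_n|$, and indeed $H_{t_n^{\alpha}}(z)\to\alpha|\log t_n|$ as $\arg z\to\infty$, so $v_{3,n}$ is of size $|\log t_n|$ in that region. But your fallback there --- invoking $\wtu_n\le|\log t_n|/2$ and claiming it is ``already dominated by $(N+2)\frac{\beta}{\alpha}|\log t_n|$'' --- is false unless $\beta\ge\frac{\alpha}{2(N+2)}$. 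The lemma is stated for all $\beta\in(0,\alpha)$, is only useful when $\beta$ is small, and is applied in Section \ref{section45} with $\alpha=\frac12$ and $\beta=\frac{1}{18(N+2)}$, where $(N+2)\frac{\beta}{\alpha}=\frac19<\frac12$; so your dichotomy does not prove the stated inequality in the large-$\arg z$ region. (The paper's own line $v_{3,n}\le\frac1\alpha|\log z|\le\frac\beta\alpha|\log t_n|$ is terse on exactly this point, but that does not repair your argument; the estimate is only ever used at points with $\arg z$ near $\pi/2$.) A secondary bookkeeping problem: even in the moderate region, your bound $v_{3,n}\le\frac{2\beta}{\alpha}|\log t_n|$ added to $v_{2,n}\le\frac{N\beta}{\alpha}|\log t_n|+O(1)$ already totals $(N+2)\frac{\beta}{\alpha}|\log t_n|+O(1)$, leaving no slack to absorb the additive constant into the stated bound; the paper keeps $v_{3,n}\le\frac{\beta}{\alpha}|\log t_n|$ so that the total is $(N+1)\frac{\beta}{\alpha}|\log t_n|+C\le(N+2)\frac{\beta}{\alpha}|\log t_n|$ for $n$ large, which is where the ``$+2$'' comes from.
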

Recalling that $u_n<t_n/2$, this lemma is usefull when $\beta$ is small. We will use it
to get information about the level sets of $u_n$.

\medskip
Proof: as we have seen in the Proof of Proposition \ref{proposition1}, we have in $A_n$
\begin{equation}
\label{equation-un-vn}
u_n\leq \frac{t_n}{|\log t_n|} v_n=\frac{t_n}{|\log t_n|}(v_{1,n}+v_{2,n}+v_{3,n}).
\end{equation}
We need to estimate the functions $v_{1,n}$, $v_{2,n}$ and
$v_{3,n}$ in $A'_n$.
We have in $A_n$
$$v_{1,n}\leq C_1(C_2+N)t_n^{2-2\alpha}|\log t_n|=o(|\log t_n|).$$
By point 6 of Lemma \ref{lemma3}, we have in $A'_n$
$$v_{3,n}\leq \frac{1}{\alpha}|\log z|\leq \frac{1}{\alpha}|\log t_n^{\beta}|=\frac{\beta}{\alpha}|\log t_n|.$$
Regarding the function
$v_{2,n}$, we need to estimate each function $h_{p_{i,n}}$ in the domain  $A'_n$.
The function $h_{p_{i,n}}$ is harmonic in the domain
$$\{z\in\CC\,:\, \arg z>0,\,t_n^{\beta}< |z|<1\}\setminus D(p_{i,n},t_n^{\beta})$$
and goes to $0$ as $\arg z\to\infty$, so its maximum is on
the boundary. Since $h_{p_{i,n}}(1/\overline{z})=h_{p_{i,n}}(z)$, the maximum is not
on the circle $|z|=1$ (because it would be an interior maximum of $h_{p_{i,n}}$). Also $h_{p_{i,n}}=0$ on $\arg z=0$. Therefore,
the maximum is either on $|z|=t_n^{\beta}$ or on the circle $C(p_{i,n},t_n^{\beta})$.
On $|z|=t_n^{\beta}$, we have $h_{p_{i,n}}\to 0$ because $p_{i,n}$ is bounded away
from $0$. On the circle $C(p_{i,n},t_n^{\beta})$, we have for $n$ large
$$\log z-\log p_{i,n}\simeq \frac{1}{p_{i,n}}(z-p_{i,n})$$
$$|\log z-\log p_{i,n}|\geq \frac{t_n^{\beta}}{2|p_{i,n}|}.$$
Hence
$$-\log|\log z-\log p_{i,n}|\leq  \log (2 |p_{i,n}|)+\beta|\log t_n|.$$
Also,
$$\log|\log z-\log \overline{p_{i,n}}|\leq \log (|\log z|+|\log p_{i,n}|)\simeq \log(2|\log p_{i,n}|).$$
Since $|p_{i,n}|$ is bounded away from $0$, this gives for $n$ large enough
$$h_{p_{i,n}}\leq C+\beta|\log t_n|\quad\text{ in $A'_n$.}$$
Hence
$$v_{2,n}\leq C+N\frac{\beta}{\alpha} |\log t_n|.$$
Collecting all terms, we get, for $n$ large enough:
$$v_n\leq C+(N+1)\frac{\beta}{\alpha}|\log t_n|\leq (N+2)\frac{\beta}{\alpha}|\log t_n|\quad\text{ in $A'_n$}.$$
Using \eqref{equation-un-vn}, the lemma follows.
\cqfd
\subsection{Proof of Proposition \ref{proposition2}}
\label{section22}
We continue with the notation of the end of the previous section.
Fix some index $i$ and let $J=\{j\in [1,N]\,:\, p_j=p_i\}$.
Passing to a subsequence, we may assume that
$$r_{i,n}=\max\{r_{j,n}\,:\,j\in J\}.$$
(The numbers $r_{j,n}$ have been defined in Section \ref{section-setup}.)
Fix some positive $\varepsilon$ such that $|p_j-p_i|\geq 2\varepsilon$ for
$j\notin J$.

From Statement~$i.$ of Proposition~\ref{Setup},  we know that  near $p_{j,n}$ the surface $M_n$ is close to a vertical catenoid with  waist circle of radius $r_{j,n}$. More precisely,
$\frac{1}{r_{j,n}}(M_n-p_{i,n})$ converges to the standard catenoid
$$x_3=\cosh^{-1}\sqrt{x_1^2+x_2^2}.$$
Since the vertical flux of the standard catenoid is $2\pi$, we have
\begin{equation}
\label{estimate-phi}
\phi_{i,n}\simeq 2\pi \sum_{j\in J}r_{j,n}\leq 2\pi |J| r_{i,n}.
\end{equation}
Let
$$h_{j,n}=r_{j,n}\cosh^{-1}(2\lambda).$$
Observe that $h_{j,n}\ll t_n$.
Consider the intersection of $M_n$ with the plane at height $h_{j,n}$ and project it
on the horizontal plane. There is one component which is close
to the circle $C(p_{j,n},2\lambda r_{j,n})$. We call this component $\gamma_{j,n}$.
Observe that $\gamma_{j,n}\subset\Omega_n$ and $f_n=h_{j,n}$ on $\gamma_{j,n}$.
Let $D_{j,n}$ be the disk bounded by $\gamma_{j,n}$.
\medskip

We now estimate  $f_n$ on the circle $C(p_{i,n}, \varepsilon)$. 
By Proposition \ref{proposition1}, we know that
 $|u_n|=O(\frac{t_n}{|log t_n|})$. Hence $f_n=\frac{t_n}{2\pi}\arg z-u_n(z)\sim 
\frac{t_n}{2\pi}\arg z$ on  $C(p_{i,n}, \varepsilon)$.
Since $p_{i,n}$ is on the positive imaginary axis, $\arg z=\pi/2 +O(\varepsilon)$ on $C(p_{i,n}, \varepsilon)$. Hence$f_n(z)\sim \frac{t_n}{4}$ on $C(p_{i,n}, \varepsilon)$.
Consequently, the level set $f_n=\frac{t_n}{8}$ inside $\Omega_n\cap D(p_{i,n},\varepsilon)$ is a closed curve, possibly with several components.
We select the component
which encloses the point $p_{i,n}$ and call it $\Gamma_n$.
(Note that by choosing a very slightly different height, we may assume that $\Gamma_n$
is a regular curve). Let $D_n$ be the disk bounded by $\Gamma_n$.
Let
$$\Omega'_n=D_n\setminus\bigcup_{j\in J} D_{j,n}$$
Then $\Omega'_n\subset\Omega_n$. 
We are now able to apply the height estimate  of Appendix \ref{appendix-height}.
We apply Proposition \ref{proposition-height} with $r_1=\lambda r_{i,n}$,
$r_2=\varepsilon$, $h=t_n/8-h_{i,n}\simeq t_n/8$ and
$f$ equal to the
function $f_n(z-p_{i,n})-t_n/8$. (Observe that by Proposition~\ref{Setup}, Statement~${ii.}$, we may assume that $|\nabla f_n|\leq 1$. Also the fact that $\partial f_n/\partial\nu<0$ on $\gamma_{j,n}$
follows from the convergence to a catenoid.)
We obtain
$$\frac{t_n}{8}-h_{i,n}\leq \frac{\sqrt{2}}{\pi}\phi_{i,n} \log\frac{\varepsilon}{\lambda r_{i,n}}.$$
Using \eqref{estimate-phi}, this gives for $n$ large enough
\begin{equation}\label{FluxEstimateOne}
\frac{t_n}{9}\leq \frac{\sqrt{2}}{\pi}\phi_{i,n} \log\frac{2\pi|J|\varepsilon}{\lambda \phi_{i,n}}
\end{equation}
This implies
\begin{equation}\label{FluxEstimateTwo}\phi_{i,n} \geq \frac{2\pi |J|\varepsilon}{\lambda} t_n^2
\end{equation}
for $n$ large. To see this, suppose that $\phi_{i,n} < \frac{2\pi |J|\varepsilon}{\lambda} t_n^2$. Substituting in \eqref{FluxEstimateOne}, we get
$$ C_1 t_n\leq t_n^2 |\log t_n| $$
for some constant $C_1>0$. This is clearly a contradiction since $t_n|\log t_n|\rightarrow 0$.
Substitution of \eqref{FluxEstimateTwo} in \eqref{FluxEstimateOne} gives
$$\frac{t_n}{9}\leq \frac{2\sqrt{2}}{\pi} \phi_{i,n} |\log t_n|$$
which  implies that $\frac{|\log t_n|}{t_n}\phi_{i,n}$ is bounded below by a positive constant independent of $n$.
 Therefore, the coefficient $c_i$  defined in \eqref{equation-ci} is
 positive,  as desired.\cqfd
 \begin{remark}
 \label{remark-minoration-rin}
 Together with \eqref{estimate-phi}, this gives
 \begin{equation}
 \label{equation-minoration-rin}
 r_{i,n}\geq \frac{1}{36|J|\sqrt{2}}\frac{t_n}{|\log t_n|}
 \end{equation}
 for large $n$.
 This is a lower bound on the size of the largest catenoidal neck in the cluster corresponding to $p_i$.
 We have no lower bound for $r_{j,n}$ if $j\in J$, $j\neq i$. Conceptually, we could have
 $r_{j,n}=o(\frac{t_n}{|\log t_n|})$, although this seems unlikely. 
 \end{remark}
\subsection{Proof of Proposition \ref{proposition3}}
\label{section23}
Let $g_n=u_{n,z}.$
We have to prove
\[
\lim \left( \frac{\log t_n}{t_n}\right)^2
   \operatorname{Re} \int_{C(p_1,\epsilon)} \frac{2t_n}{4\pi i z} u_{n,z} (1-z)^2\,dz = 0,
\]
i.e., that
$$\Re\int_{C(p_1,\varepsilon)}\frac{1}{2iz}g_n(z)(1-z^2)dz=o\left(\frac{t_n}{(\log t_n)^2}\right).$$
Fix some $\alpha$ such that $0<\alpha<\frac{1}{2}$ and some small $\varepsilon>0$.
Let $J$ be the set of indices such that $p_j=p_1$.
Consider the domain
$$A_n=D(p_1,\varepsilon)-\bigcup_{j\in J} D(p_{j,n},t_n^{\alpha})\subset\Omega_n.$$
By Proposition \ref{proposition-schauder} in Appendix \ref{appendix-schauder}, we have in $A_n$
$$|g_{n,\overline{z}}|=\frac{1}{4}|\Delta u_n|=\frac{1}{4}|\Delta f_n|\leq Ct_n^{3-4\alpha}.$$
$$|\nabla f_n|\leq Ct_n^{1-\alpha}.$$
As the gradient of $t_n\arg z$ is $O(t_n)$ in $A_n$, this gives
$$|\nabla u_n|\leq Ct_n^{1-\alpha}.$$
Hence
\begin{equation}
\label{equation-gn}
|g_n|\leq Ct_n^{1-\alpha}.
\end{equation}
Proposition \ref{proposition-laurent} gives us the formula
$$g_n(z)=g^+(z)+
\sum_{j\in J}
g_j^-(z)+ \frac{1}{2\pi i}\int_{A_n}\frac{g_{n,\overline{z}}(w)}{w-z}dw\wedge\overline{dw}$$
where of course the functions $g^+$ and $g_j^-$ depend on $n$.
\begin{itemize}
\item The function $g^+$ is holomorphic in $D(p_1,\varepsilon)$ so does not contribute to the integral.
\item The last term is bounded by $Ct_n^{3-4\alpha}$. (The integral of $dw\wedge \overline{dw}/(w-z)$ is
uniformly convergent.) Therefore we need $3-4\alpha>1$, namely $\alpha<\frac{1}{2}$ 
so that the contribution of this term to the integral is $o(t_n/(\log t_n)^2)$.
\begin{remark} This is a crude estimate. The laplacian $\Delta u_n$ is bounded
by $Ct_n^3/d^4$, where $d$ is distance to the boundary. Integrating this estimate one
get that this term is less than $Ct_n^{3-2\alpha}$, which is better.
But one still needs $\alpha<1$ to ensure that this term is $o(t_n/(\log t_n)^2)$.
\end{remark}
\item Each function $f_j^-$ is expanded in series as in Proposition \ref{proposition-laurent}. By Proposition \ref{proposition-real-residue} in Appendix \ref{appendix-laurent},
each residue $a_{j,1}$ is real. Hence
\begin{equation}
\label{equation-aj1}
\Re\int_{C(p_1,\varepsilon)} \frac{1}{2iz} \frac{a_{j,1}}{z-p_{j,n}} (1-z^2)dz
=a_{j,1}\Re \left(\frac{2\pi i}{2i p_{j,n}}(1-p_{j,n}^2)\right)=0
\end{equation}
because $p_{j,n}$ is imaginary, so $a_{j,1}$ does not contribute to the integral.
\item It remains to estimate the coefficients $a_{j,k}$ for $k\geq 2$. Using
\eqref{equation-gn},
$$|a_{j,k}|=\left|\frac{1}{2\pi i}\int_{C(p_{j,n},t_n^{\alpha})}g_n(z)(z-p_{j,n})^{k-1}dz\right|
\leq C t_n^{1+(k-1)\alpha}$$
If $z\in C(p_1,\varepsilon)$, then $|z-p_{j,n}|\geq \varepsilon/2$, so
$$\left|
\sum_{k=2}^{\infty} a_{j,k} (z-p_{j,k})^{-k}\right|\leq C\sum_{k\geq 2} t_n^{1+(k-1)\alpha}
\left(\frac{2}{\epsilon}\right)^k\leq \frac{4C}{\varepsilon^2} t_n^{1+\alpha}\sum_{k=2}^{\infty}
\left(\frac{2t_n^{\alpha}}{\varepsilon}\right)^{k-2}.$$
The last sum converges because $\alpha>0$. Hence
the contribution of this term to the integral is $o(t_n/(\log t_n)^2)$ as desired.
\end{itemize}
\cqfd
\section{Case 2: $p_1=0$}
\label{section3}
In this case we make a blow up at the origin.
Let
$$R_n=\frac{1}{|p_{1,n}|}$$
(Here we assume again that the points $p_{i,n}$ are ordered by increasing imaginary
part as in Section \ref{section-setup}.)
Let $\whM_n=R_n M_n$. This is a helicoidal minimal surface in $\S^2(R_n)\times\R$ with
pitch $$\wht_n=R_n t_n.$$
By choice of $p_{1,n}$, we have $|p_{1,n}| >> t_n$, so $\lim \wht_n=0$. 
Let $\whOmega_n=R_n\Omega_n$. $\whM_n$ is the graph on $\whOmega_n$
of the function
$$\whf_n(z)=\frac{\wht_n}{2\pi}\arg z-\whu_n(z)$$
where
$$\whu_n(z)=R_n u_n(\frac{z}{R_n}).$$
Let $\whp_{i,n}=R_n p_{i,n}$. Passing to a subsequence
$$\whp_j=\lim\whp_{j,n}\in[i,\infty]$$
exists for $j\in[1,N]$ and we have $\whp_1=i$.
Let $m$ be the number of distinct, finite points amongst $\whp_1,\cdots,\whp_N$.
Relabel the points so that $\whp_1,\cdots,\whp_m$ are distinct and
$$1=\Im \whp_1<\Im\whp_2<\cdots<\Im \whp_m.$$
\begin{proposition}
\label{proposition4}
Passing to a subsequence,
$$\lim\frac{|\log \wht_n|}{\wht_n}\whu_n(z)=
\whc_0\arg z+\sum_{i=1}^{m} \whc_i h_{\whp_i}(z).$$
The convergence is the smooth uniform convergence on compact subsets of $\CC$ minus the points
$\pm\whp_i$, for $1\leq i\leq m$.
The numbers $\whc_i$ for $1\leq i\leq m$ are given by
$$\whc_i=\lim\frac{|\log\wht_n|}{\wht_n}\frac{\widehat{\phi}_{i,n}}{2\pi}$$
where $\widehat{\phi}_{i,n}$ is the vertical flux of $\whM_n$ on the graph of $\whf_n$ restricted
to the circle $C(\whp_i,\varepsilon)$, for some fixed small enough $\varepsilon$.
Moreover, $\whc_i>0$ for $1\leq i\leq m$.
\end{proposition}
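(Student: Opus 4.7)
The plan is to mirror the combined arguments of Propositions \ref{proposition1} and \ref{proposition2}, applied now to the rescaled surfaces $\whM_n$. Since $\wht_n=R_n t_n\to 0$ (because $t_n\ll|p_{1,n}|$), the whole asymptotic framework of Section \ref{section21}---Schauder estimate, explicit barriers, maximum principle---transfers verbatim with $\wht_n$ replacing $t_n$ and $\whu_n$ replacing $u_n$. The only genuinely new feature is that the $\mu_E$-symmetry in the rescaled picture acts as $z\mapsto R_n^2/\bar z$, so the Neumann boundary $\{|z|=R_n\}$ recedes to infinity and becomes invisible on any fixed compact subset of $\CC$.

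Fix $\alpha\in(0,1)$ and define $\widehat{A}_n\subset\CC$ by removing from $\{\arg z>0\}$ the two sleeves $\{|z|<\wht_n^\alpha\}$ and $\{|z|>R_n^2/\wht_n^\alpha\}$ together with disks of radius $\wht_n^\alpha$ around each $\whp_{j,n}$. For $j>m$ the points $\whp_{j,n}$ escape to infinity, so those disks are harmless on any fixed compact set. Proposition \ref{proposition-schauder} then delivers $|\Delta\whu_n|\leq C\wht_n^3/\widehat{\delta}^4$ on $\widehat{A}_n$, where $\widehat{\delta}$ is the distance to the boundary. The barrier $\widehat{v}_n=\widehat{v}_{1,n}+\widehat{v}_{2,n}+\widehat{v}_{3,n}$ is then built exactly as in Lemmas \ref{lemma2} and \ref{lemma3}: $\widehat{v}_{1,n}$ uses $1/|z|^2$-type terms to soak up the Laplacian bound; $\widehat{v}_{2,n}=\frac{1}{\alpha}\sum_{j=1}^N h_{\whp_{j,n}}$ handles the logarithmic singularities, its $\mu_E$-symmetry under $z\mapsto R_n^2/\bar z$ being automatic because the set $\{\whp_{j,n}\}$ inherits this symmetry from \eqref{equation**}; and $\widehat{v}_{3,n}$ is two suitably scaled copies of Lemma \ref{lemma3}'s function $H_t$, placed so as to dominate $\whu_n$ on $\{|z|=\wht_n^\alpha\}$ and on its $\mu_E$-image while remaining Neumann across $|z|=R_n$.

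Given the maximum-principle bound $\whu_n\leq\frac{\wht_n}{|\log\wht_n|}\widehat{v}_n$, for every compact $K\subset\CC\setminus\{\pm\whp_1,\dots,\pm\whp_m\}$ the right-hand side is uniformly bounded for large $n$, since $K$ eventually lies in $\{|z|<R_n\}$ and stays away from the finite singularities. Standard elliptic theory produces a subsequential smooth limit $\widehat{u}$ of $\frac{|\log\wht_n|}{\wht_n}\whu_n$ on compacta. This $\widehat{u}$ is harmonic on $\CC\setminus\{\pm\whp_i\}_{i=1}^m$, antisymmetric under $z\mapsto\bar z$, and nonnegative for $\arg z>0$. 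Composing with the biholomorphism $\log:\CC\to\C$ (which sends $\{\arg z>0\}$ onto the upper half plane), Lemma \ref{lemma-positive-harmonic} identifies $\widehat{u}$ in the stated form $\whc_0\arg z+\sum_{i=1}^m\whc_i h_{\whp_i}$, and the formula for $\whc_i$ falls out of Proposition \ref{proposition-flux2} and the Residue Theorem exactly as at the end of the proof of Proposition \ref{proposition1}.

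The positivity $\whc_i>0$ follows by repeating the argument of Section \ref{section22} with the rescaled quantities: the height estimate Proposition \ref{proposition-height}, applied to the annular region between the $i$-th catenoidal neck (of waist radius $\widehat{r}_{i,n}=R_n r_{i,n}$) and a level curve of $\whf_n$ at height $\wht_n/8$, yields $\wht_n\lesssim\widehat{\phi}_{i,n}\,|\log\widehat{r}_{i,n}|$. Combined with $\widehat{\phi}_{i,n}\simeq 2\pi|J|\widehat{r}_{i,n}$, this forces $\frac{|\log\wht_n|}{\wht_n}\widehat{\phi}_{i,n}$ bounded below by a positive constant, giving $\whc_i>0$. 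The main technical nuisance throughout is bookkeeping the receding $\mu_E$-symmetry, but since the analytic estimates depend only on $\wht_n\to 0$ and not on the scale of the symmetric boundary, this is a matter of notation rather than a genuine obstacle.
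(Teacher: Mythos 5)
Your proposal is correct and follows essentially the same route as the paper: the authors likewise prove Proposition \ref{proposition4} by rerunning the barrier/maximum-principle argument of Proposition \ref{proposition1} and the height-estimate argument of Proposition \ref{proposition2} with $\wht_n$ in place of $t_n$, the equatorial Neumann boundary moved to $|z|=R_n$, and the symmetry of $\{\whp_{j,n}\}$ under $z\mapsto R_n^2/\overline z$ doing the bookkeeping. The only cosmetic difference is that the paper keeps the single barrier $H_{t}$ of Lemma \ref{lemma3} and merely checks $\partial H_t/\partial\nu\geq 0$ on $|z|=R_n$ (and notes the uniform bound on $h_{\whp_{j,n}}$ for the escaping points via $|\log z-\log p|\leq|\log z-\log\overline p|$), whereas you symmetrize the domain and the barrier across $|z|=R_n$; both are equivalent reflections of the same argument.
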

This proposition is proved in Section \ref{section31}.
The proof is very similar to the proof of Proposition \ref{proposition1}, and Proposition \ref{proposition2} for the last statement.

\medskip
Fix some small $\varepsilon>0$.
Let $F_n$ be the flux of the Killing fields $\chi_Y$ on the circle $C(\whp_1,\varepsilon)$
on $\whM_n$. Since we are in $\S^2(R_n)\times\R$,
$$\chi_Y=\frac{i}{2}(1-\frac{z^2}{R_n^2}).$$
$$F_n=-\Im\int_{C(\whp_1,\varepsilon)}
2\left(\frac{\partial}{\partial z}\left(\frac{\wht_n}{2\pi}\arg z-\whu_n\right)\right)^2\frac{i}{2}\left(1-\frac{z^2}{R_n^2}\right)dz+O((\wht_n)^4).$$
Expand the square. As in Case 1, the cross product term can be neglected and
since $R_n\to\infty$:
\begin{proposition}
\label{proposition5}
$$\lim\left(\frac{\log \wht_n}{\wht_n}\right)^2 F_n=-\lim\left(\frac{\log \wht_n}{\wht_n}\right)^2\Re\int_{C(\whp_1,\varepsilon)} (\whu_{n,z})^2 dz$$
\end{proposition}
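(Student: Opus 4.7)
The plan is to expand the square in the definition of $F_n$ exactly as was done in Case 1, and then to show that every term except the one involving $(\whu_{n,z})^2 \cdot 1$ contributes $o\bigl(\wht_n^2/(\log \wht_n)^2\bigr)$. After computing $-\Im(i\cdot w) = -\Re(w)$, one finds
\[
F_n = \Re\int_{C(\whp_1,\varepsilon)} \left(\frac{\wht_n^2}{16\pi^2 z^2} + \frac{2\wht_n}{4\pi i z}\whu_{n,z} - (\whu_{n,z})^2\right)\!\left(1-\frac{z^2}{R_n^2}\right)\!dz + O(\wht_n^4).
\]
The first (pure helicoid) term and its $z^2/R_n^2$ correction are both holomorphic on a neighborhood of the closed disk $\overline{D(\whp_1,\varepsilon)}$, since $\whp_1 \neq 0$ is bounded away from the origin and $\varepsilon$ is chosen small. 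So by Cauchy's theorem this term contributes $0$ outright and does not even need to be absorbed into the error.

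The core technical step, completely parallel to the proof of Proposition \ref{proposition3}, is to show that
\[
\Re\int_{C(\whp_1,\varepsilon)}\frac{2\wht_n}{4\pi i z}\whu_{n,z}\!\left(1-\frac{z^2}{R_n^2}\right)\!dz = o\!\left(\frac{\wht_n^2}{(\log \wht_n)^2}\right).
\]
I would reproduce the Laurent decomposition argument verbatim, with $t_n$ replaced by $\wht_n$, $p_1$ replaced by $\whp_1$, $u_n$ replaced by $\whu_n$, and the cluster index set $J$ now indexing those $\whp_{j,n}$ that accumulate at $\whp_1$. The Schauder bound of Proposition \ref{proposition-schauder} applies on the annular region $D(\whp_1,\varepsilon)\setminus\bigcup_{j\in J}D(\whp_{j,n},\wht_n^\alpha)$ for $0<\alpha<1/2$, giving $|\whu_{n,z}|\leq C\wht_n^{1-\alpha}$ and $|\Delta\whu_n|\leq C\wht_n^{3-4\alpha}$. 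Then Proposition \ref{proposition-laurent} splits $\whu_{n,z}$ into a holomorphic piece (which integrates to $0$), polar pieces at each $\whp_{j,n}$, and a residual integral controlled by the Laplacian. The key point, as in \eqref{equation-aj1}, is that the residues $a_{j,1}$ are real by Proposition \ref{proposition-real-residue}, so when paired against $(1-z^2/R_n^2)/(2iz)$ — whose relevant evaluation at $\whp_{j,n}\in i\R$ is purely imaginary — the real part vanishes. The factor $1-z^2/R_n^2$ is bounded uniformly in $n$ and does not affect any of these estimates. Higher-order Laurent coefficients $a_{j,k}$ for $k\geq 2$ contribute at worst $O(\wht_n^{1+\alpha})$ by the same geometric-series bookkeeping.

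Finally, for the surviving term $\Re\int_{C(\whp_1,\varepsilon)}(\whu_{n,z})^2(1-z^2/R_n^2)\,dz$, I would split it as
\[
\Re\int_{C(\whp_1,\varepsilon)}(\whu_{n,z})^2\,dz \;-\; \frac{1}{R_n^2}\Re\int_{C(\whp_1,\varepsilon)}z^2(\whu_{n,z})^2\,dz.
\]
By Proposition \ref{proposition4}, on the fixed circle $C(\whp_1,\varepsilon)$ (which avoids the singular set of the limit for $\varepsilon$ small) one has $|\whu_{n,z}| = O(\wht_n/|\log \wht_n|)$, so the correction term is $O\bigl((\wht_n/|\log\wht_n|)^2/R_n^2\bigr)$; multiplying by $(\log\wht_n/\wht_n)^2$ gives $O(1/R_n^2)\to 0$. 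Combining the three pieces yields the proposition. The main obstacle is the cross-term estimate, since reality of the polar residues $a_{j,1}$ must still hold after scaling — but this is a direct consequence of the symmetry $\whu_n(\bar z) = -\whu_n(z)$, which is inherited from $u_n$, so Proposition \ref{proposition-real-residue} applies in the rescaled setup without change.
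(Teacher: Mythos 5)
Your proof is correct and follows essentially the same route as the paper, which simply declares the proof to be identical to that of Proposition \ref{proposition3}: expanding the square, discarding the pure helicoid term by holomorphy near $\whp_1=i\neq 0$, killing the cross term via the Laurent decomposition with real residues $a_{j,1}$ paired against a purely imaginary evaluation at $\whp_{j,n}\in i\R$, and removing the $z^2/R_n^2$ correction using the bound $|\whu_{n,z}|=O(\wht_n/|\log\wht_n|)$ on the fixed circle from Proposition \ref{proposition4} together with $R_n\to\infty$. One small remark: the reality of the residues $a_{j,1}$ follows directly from $\whu_n$ being a single-valued real function via Proposition \ref{proposition-real-residue}, and does not actually require the antisymmetry $\whu_n(\overline z)=-\whu_n(z)$ that you invoke at the end.
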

(Same proof as Proposition \ref{proposition3}).

\medskip

Assuming these results, we now prove
\begin{proposition}
Case 2 is impossible.
\end{proposition}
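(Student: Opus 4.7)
The plan is to mimic the argument of Proposition \ref{proposition-case1}, carrying it out for the blown-up surfaces $\whM_n$ with the residue contribution that the blowup produces at $\whp_1 = i$. Write $v$ for the limit $\lim \frac{|\log \wht_n|}{\wht_n}\whu_n = \whc_0 \arg z + \sum_{i=1}^m \whc_i h_{\whp_i}$ provided by Proposition \ref{proposition4}.

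The first step is to show that the analog of Lemma \ref{lemma*8} holds: since $\rho_Y$ fixes the origin, it commutes with the homothety $z \mapsto R_n z$, so $\whM_n$ is $\rho_Y$-invariant; then the graph of $\whf_n$ over $C(\whp_1, \varepsilon)$ together with its $\rho_Y$-image bounds a compact piece of $\whM_n$, giving vanishing flux of $\chi_Y$. Thus $F_n = 0$, and combining with Proposition \ref{proposition5} yields
\[
\Re \int_{C(\whp_1, \varepsilon)} (v_z)^2\, dz = 0.
\]

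The next step is the residue computation at $\whp_1 = i$. Differentiating the formula for $v$,
\[
v_z = \frac{\whc_0}{2iz} - \sum_{i=1}^m \frac{\whc_i}{2z}\left(\frac{1}{\log z - \log \whp_i} - \frac{1}{\log z - \log \overline{\whp_i}}\right),
\]
and only the $i=1$ pole lies inside $C(\whp_1,\varepsilon)$ in $\CC$. Using the Laurent expansion $\frac{1}{\log z - \log \whp_1} = \frac{\whp_1}{z - \whp_1} + \frac{1}{2} + O(z - \whp_1)$ and multiplying by $-\whc_1/(2z)$ expanded at $\whp_1$, one obtains $v_z = -\frac{\whc_1}{2(z - \whp_1)} + A + O(z - \whp_1)$ for an explicit constant $A$ that combines the subleading contribution $\whc_1/(4\whp_1)$ from this product with the values at $\whp_1$ of the remaining regular terms. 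Thus $\Res_{\whp_1}(v_z)^2 = -\whc_1 A$, so the Residue Theorem gives $\Re \int = -2\pi \whc_1 \Im A$.

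The final step is to compute $\Im A$ and check that it is strictly negative. Writing $\whp_j = i y_j$ with $y_1 = 1$ and $y_j > 1$ for $j \geq 2$, and using $\log \whp_1 - \log \overline{\whp_1} = i\pi$ and $\log \whp_1 - \log \whp_j = -\log y_j$, a short calculation gives
\[
\Im A = -\frac{\whc_1}{4} - \sum_{j=2}^m \frac{\pi^2 \whc_j}{2\, \log y_j\, |\log y_j - i\pi|^2}.
\]
Since $\whc_1 > 0$ and $\whc_j \geq 0$ by Proposition \ref{proposition4}, $\Im A < 0$, contradicting $\Re \int = 0$. The key novelty compared with Case 1 is that in the blowup the factor $(1 - z^2/R_n^2)$ in the Killing field tends to $1$, so the antipodal repulsion term $c_1^2(1-y_1^2)/(1+y_1^2)$ disappears; in its place appears a self-contribution $-\whc_1/4$ arising from the subleading $\tfrac12$ in the Laurent expansion of $1/(\log z - \log \whp_1)$, and this self-term adds to, rather than balances, the attractive inter-neck forces. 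The only routine obstacle is sign-tracking in the Laurent expansion and branches of $\log$ in $\CC$.
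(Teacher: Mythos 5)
Your proposal is correct and takes essentially the same route as the paper: blow up at the origin, use $\rho_Y$-symmetry and Proposition \ref{proposition5} to reduce to $\Re\int_{C(\whp_1,\varepsilon)}(v_z)^2\,dz=0$, and compute the residue at $\whp_1=i$ — your constant $A$ reproduces exactly the paper's expression $\frac{\pi}{2y_1}\bigl(\whc_1^2+\sum_{i\ge 2}\frac{-2\pi^2\whc_1\whc_i}{(\log y_1-\log y_i)|\log y_1-\log y_i+i\pi|^2}\bigr)$, with your $-\whc_1/4$ self-term being the paper's double-pole contribution $-\whc_1^2/(4\whp_1)$ in different bookkeeping. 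One harmless sign slip: the Residue Theorem gives $\Re\int = +2\pi\whc_1\Im A$, not $-2\pi\whc_1\Im A$, but since the contradiction only requires $\Im A\neq 0$ the argument stands.
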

Proof:
Write
$\whp_j=i y_j$.
By the same computation as in Section \ref{section2}, we get
(the only difference is that there is no $(1-z^2)$ factor)
$$-\Re\int_{C(\whp_1,\varepsilon)}(\wtu_z)^2dz=
\frac{\pi}{2 y_1}\left(
\whc_1^2+\sum_{i=2}^m \frac{-2\pi^2\, \whc_1 \whc_i}{(\log y_1-\log y_i)|\log y_1-\log y_i+i\pi|^2}\right).$$
Again, since $y_i>y_1$ for $i\geq 2$, all terms are positive, contradiction.
\cqfd
\subsection{Proof of Proposition \ref{proposition4}}
\label{section31}
The setup of Proposition \ref{proposition4} is the same as Proposition \ref{proposition1}
except that we are in $\S^2(R_n)\times\R$ with $R_n\to\infty$
instead of $\S^2(1)\times\R$, and the pitch is $\wht_n$.
 Remember that $\lim \wht_n=0$.

\medskip

{\bf From now on forget all hats:} write $t_n$ instead of $\wht_n$,
$u_n$ instead of $\whu_n$, $p_{i,n}$ instead of $\whp_{i,n}$, etc...

\medskip
The proof of Proposition \ref{proposition4} is substantially the same as the proofs of Propositions \ref{proposition1} and \ref{proposition2}.
The main difference is that the equatorial circle $|z|=1$ becomes
$|z|=R_n$.
\begin{itemize}
\item The definition of the domain $A_n$ is the same with $|z|<1$ replaced by
$|z|<R_n$.
\item Lemma \ref{lemma1} is the same (recall that now $p_{i,n}$ means $\whp_{i,n}$).
\item Lemma \ref{lemma2} is the same. The last statement must be replaced by
$\partial g_n/\partial\nu\leq 0$ on $|z|=R$ for $R\geq 1$.
\item Lemma \ref{lemma3} is the same, we do not change the definition of the function
$H_t$. Instead of point 3, we need $\partial H_t/\partial\nu\geq 0$ on
$|z|=R$ for $R\geq 1$. This is true by the following computation:
$$\frac{\partial H_t}{\partial r}=\frac{2\log r(\log t)^2(|\log t|+\theta)}
{((\log t-\theta)^2+(\log r)^2)^2}.$$
\item The definition of the function $\wtu_n$ is the same, and it has the same properties,
except that the last point must be replaced by $\partial \wtu_n/\partial\nu=0$ on
$|z|=R_n$.
\item The definition of the function $v_{2,n}$ is the same (with $\whp_{i,n}$
in place of $p_{i,n}$), now it is symmetric
with respect to the circle $|z|=R_n$.
\item At the end, $K$ is a compact of the set $\{z\in\CC,\arg z\geq 0\}\setminus
\{\whp_1,\cdots,\whp_m\}.$
The fact that $v_{2,n}$ is uniformly bounded on $K$
requires some care, maybe, because some points $\whp_{i,n}$ are not
bounded: it is true by the fact that if $\arg z$ and $\arg p$ are positive, then
$$|\log z-\log p|\leq |\log z-\log\overline{p}|.$$
\item The proof of the last point is exactly the proof of Proposition \ref{proposition2},
working in $\S^2(R_n)\times\R$ instead of $\S^2(1)\times\R$.
\end{itemize}

\cqfd

\section{Case 3: $p_1=i$}
Note that in this case, all points $p_{j,n}$ converge to $i$, for $j\in[1,N]$.
\label{section4}
We distinguish two sub-cases:
\begin{itemize}
\item Case 3a: there exists $\beta>0$ such that $|p_{1,n}-i|\leq t_n^{\beta}$ for $n$ large enough,
\item Case 3b: for all $\beta>0$, $|p_{1,n}-i|\geq t_n^{\beta}$ for $n$ large enough.
\end{itemize}
(Here we assume again that the points $p_{i,n}$ are ordered by increasing imaginary
part as in Section \ref{section-setup}.)
Roughly speaking, in Case 3a, all points $p_{j,n}$ converge to $i$ quickly, whereas
in Case 3b, at least two ($p_{1,n}$ and $p_{N,n}$ by symmetry) converge to $i$ very slowly.
We will see in Proposition \ref{proposition-case3a} that $N=1$ and $p_{1,n}=i$
in Case 3a, and in Proposition \ref{proposition-case3b} that Case 3b is impossible.

\medskip
In both cases, we make a blowup at $i$ as follows :
Let $\varphi:\S^2\to\S^2$ be the rotation of angle $\pi/2$ which fixes the $Y$ circle and maps
$i$ to $0$. Explicitly, in our model of $\S^2(1)$
$$\varphi(z)=\frac{z-i}{1-iz},\qquad \varphi^{-1}(z)=\frac{z+i}{1+iz}.$$
It exchanges the equator $E$ and the great circle $X$.
$\varphi$ lifts in a natural way to an isometry $\widehat{\varphi}$ of $\S^2\times\R$.
We first apply the isometry $\widehat{\varphi}$ and
then we scale by $1/\mu_n$ where the ratio $\mu_n$ goes to zero and will be chosen later, depending on the case.
Let
$$\whM_n=\frac{1}{\mu_n}\widehat{\varphi}(M_n)\subset \S^2(1/\mu_n)\times\R,$$
$$\whOmega_n=\frac{1}{\mu_n}\varphi(\Omega),\qquad
\whp_{i,n}=\frac{1}{\mu_n}\varphi(p_{i,n}),\qquad
\wht_n=\frac{t_n}{\mu_n}.$$
The minimal surface $\whM_n$ is the graph over $\whOmega_n$ of the function
$$\whf_n(z)=\frac{1}{\mu_n}f_n(\varphi^{-1}(\mu_n z))=
\wht_n w_n(z)-\whu_n(z)$$
where
\begin{equation}
\label{equation-wn}
w_n(z)=\frac{1}{2\pi} \arg\left(\frac{\mu_n z+i}{1+i\mu_n z}\right)
\end{equation}
$$\whu_n(z)=\frac{1}{\mu_n} u_n(\varphi^{-1}(\mu_n z)).$$
\subsection{Case 3a}
\label{section41}
In this case, fix some positive number $\alpha$ such that
$\alpha<\min\{\beta,\frac{1}{8}\}$,
and take $\mu_n=t_n^{\alpha}$.
Then for all $j\in[1,N]$, $|p_{j,n}-i|=o(\mu_n)$, so $\lim \whp_{j,n}=0$.
\begin{proposition}
\label{proposition6}
In Case 3a, passing to a subsequence,
\begin{equation}
\label{EQ*1}
\lim\frac{|\log t_n|}{\wht_n}\left(\whu_n(z)-\whu_n(z_0)\right)=-c(\log|z|-\log |z_0|).
\end{equation}
The convergence is the uniform smooth convergence on compact subsets of $\C\setminus\{0\}$.
(Here $z_0$ is an arbitrary fixed nonzero complex number.)
The constant $c$ is positive.
\end{proposition}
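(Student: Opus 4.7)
The plan is to adapt the barrier-and-maximum-principle argument of Proposition~\ref{proposition1} and the height estimate of Proposition~\ref{proposition2} to the blown-up coordinates, exploiting the fact that in this scale all catenoidal neck points $\whp_{j,n}$ collapse to the single point $0$, so the limit must carry only one logarithmic singularity there.

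First I would translate the symmetries~\eqref{equation*4} of $u_n$ into symmetries of $\whu_n$. A direct computation shows that $\varphi(z)=(z-i)/(1-iz)$ conjugates the involution $z\mapsto\overline z$ to $w\mapsto 1/\overline w$ and conjugates $z\mapsto 1/\overline z$ to $w\mapsto\overline w$. After the scaling $w\mapsto w/\mu_n$, the $\mu_E$-Neumann symmetry of $u_n$ becomes $\whu_n(\overline w)=\whu_n(w)$ (Neumann on the real axis of the $w$-plane), while the odd symmetry $u_n(\overline z)=-u_n(z)$ becomes the Dirichlet condition $\whu_n\equiv 0$ on the circle $|w|=1/\mu_n$. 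Using the $\mu_E$-symmetry $p_{N+1-i,n}=1/\overline{p_{i,n}}$ together with the Case~3a hypothesis $|p_{1,n}-i|\le t_n^{\beta}$ and $\alpha<\beta$, one checks that $|\whp_{j,n}|=O(t_n^{\beta-\alpha})\to 0$ for every $j\in[1,N]$. Also $\wht_n=t_n^{1-\alpha}\to 0$ and $|\log\wht_n|=(1-\alpha)|\log t_n|$, so the normalization $|\log t_n|/\wht_n$ in~\eqref{EQ*1} agrees with $|\log\wht_n|/\wht_n$ up to a bounded factor.

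Next I would build a barrier $\widehat v_n=\widehat v_{1,n}+\widehat v_{2,n}+\widehat v_{3,n}$ for $\whu_n$ on the annular region $\widehat A_n=\{t_n^{\alpha'}<|w|<\mu_n^{-1}\}$ minus small disks around the $\whp_{j,n}$, for a well-chosen $\alpha'\in(0,1-\alpha)$, following the three-term structure of Section~\ref{section-proof-proposition1}. A Schauder-type estimate (Proposition~\ref{proposition-schauder}) bounds $|\Delta\whu_n|$ by an inverse fourth power of the distance to $\partial\widehat A_n$, absorbed by the analogue of $v_{1,n}$; the restriction $\alpha<1/8$ ensures that the residual term is $o(\wht_n/|\log t_n|)$. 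The analogue of $v_{2,n}$ collapses to a single copy of $-\log|w|$ (plus its reflection across $|w|=\mu_n^{-1}$), absorbing the merged cluster of logarithmic singularities. The analogue of $v_{3,n}$ is built from $H_{\mu_n}$ of Lemma~\ref{lemma3}, transported through $\varphi$, to control the Dirichlet boundary $|w|=\mu_n^{-1}$. The maximum principle then yields a uniform bound on $|\log t_n|\,\wht_n^{-1}|\whu_n(w)-\whu_n(z_0)|$ on every compact subset of $\C\setminus\{0\}$. Given interior Schauder estimates, a subsequence of $|\log t_n|\,\wht_n^{-1}(\whu_n-\whu_n(z_0))$ then converges smoothly to a harmonic function $\whu$ on $\C\setminus\{0\}$, symmetric under $w\mapsto\overline w$ and with at most logarithmic growth at $0$ and $\infty$; by Fourier expansion in $\arg w$ any such function reduces to $a\log|w|+b$, and the normalization at $z_0$ gives $\whu(w)=-c(\log|w|-\log|z_0|)$ for some real $c$.

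The main obstacle, the analogue of Proposition~\ref{proposition2}, is to prove $c>0$. For this I would rerun the height-estimate argument of Section~\ref{section22}, now applied to the level set of $f_n$ enclosing the entire cluster of catenoidal necks $\{C_{j,n}(\lambda)\}_{j=1}^{N}$ converging to $i$. Since by the barrier bound above $u_n$ is $o(t_n)$ outside any fixed neighborhood of $i$, the level set $\{f_n=t_n/8\}$ encloses all $N$ necks in a single component for $n$ large. Proposition~\ref{proposition-height} then gives $\Phi_n:=\sum_{j=1}^N\phi_{j,n}\ge C\,t_n/|\log t_n|$ exactly as in \eqref{FluxEstimateOne}--\eqref{FluxEstimateTwo}, except that the single-neck flux $\phi_{i,n}$ is replaced by the total $\Phi_n$ and the horizontal scale $\varepsilon$ is replaced by a fixed neighborhood of $i$. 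A residue computation at $w=0$, the transported version of~\eqref{equation-ci}, identifies $c$ with a positive multiple of $\lim|\log t_n|\,\Phi_n/t_n$, which is therefore strictly positive.
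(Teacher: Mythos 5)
There is a genuine gap at the heart of your compactness step: you cannot pass from an upper barrier for $\whu_n$ to a uniform bound on the normalized \emph{difference} $\frac{|\log t_n|}{\wht_n}\bigl(\whu_n(w)-\whu_n(z_0)\bigr)$. As the paper's remark following Proposition \ref{proposition6} points out, $\frac{|\log t_n|}{\wht_n}\whu_n(w)$ diverges at every fixed $w$: by Proposition \ref{proposition1} the rescaled $u_n$ behaves like $c_1h_i(z)+\dots$ with $c_1>0$, and at $z=\varphi^{-1}(\mu_n w)$ one has $h_i(z)\approx-\log|z-i|+O(1)\approx\alpha|\log t_n|+O(1)$, so $\frac{|\log t_n|}{\wht_n}\whu_n(w)$ grows like $c_1\alpha|\log t_n|$. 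Any admissible upper barrier must therefore also diverge on compact subsets of $\C\setminus\{0\}$, and the pair of inequalities $0\le\whu_n\le v_n$ gives no control of the oscillation of $\whu_n$ at the required scale $\wht_n/|\log t_n|$. Interior Schauder or Harnack estimates do not close the gap either: from $\sup\whu_n\sim C\wht_n$ they yield $|\nabla\whu_n|\lesssim\wht_n/\operatorname{dist}$, hence an oscillation bound of order $\wht_n$, which after multiplication by $|\log t_n|/\wht_n$ still blows up by a factor $|\log t_n|$. This missing factor of $|\log t_n|$ is precisely the content of the paper's proof and your proposal contains no substitute for it.

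What the paper actually does (Sections \ref{section43}--\ref{section44}) is establish the boundary-integral estimate $\int_{\partial U_n}|\nabla u_n|\le Ct_n/|\log t_n|$: on the outer circle $C(i,\varepsilon)$ this follows from the smooth convergence already proved in Propositions \ref{proposition1} and \ref{proposition2} (which apply here since $p_1=i\neq0$), and on the inner circles it follows from the averaged height estimate, Proposition \ref{proposition-height2}, which produces radii $r'_{j,n}\in[t_n^{1/4},t_n^{1/8}]$ with $\int|\nabla f_n|\le C\phi_n\le Ct_n/|\log t_n|$. Feeding this into the Cauchy--Pompeiu formula for $\wtu_{n,z}$ gives $|\wtu_{n,z}(\zeta)|\le C/|\zeta|$, hence convergence of the \emph{derivative} to $c/(2z)$, and only then is the statement about the difference $\whu_n(w)-\whu_n(z_0)$ recovered. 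Your final step is sound but redundant: rerunning the height estimate on the whole cluster to show $c>0$ is essentially the proof of Proposition \ref{proposition2}, and the paper obtains $c=c_1>0$ directly from the flux identity $\whphi_n=\phi_{1,n}/\mu_n$ together with Propositions \ref{proposition1} and \ref{proposition2}. (Your symmetry computations for $\varphi$ and the estimate $|\whp_{j,n}|\to0$ are correct, but they do not address the main difficulty.)
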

The Proof is in Section \ref{section44}.
\begin{remark}
In fact
$$\lim \frac{|\log t_n|}{\wht_n}\whu_n(z)=\infty$$ for all $z$, so it is necessary to substract
something to get a finite limit.
Because of this, we believe it is not possible to prove this
proposition by a barrier argument as in the proof of Proposition \ref{proposition1}. Instead, we will
prove the convergence of the derivative $\whu_{n,z}$ using the Cauchy Pompeieu integral formula
for $C^1$ functions.
\end{remark}
We now prove
\begin{proposition}
\label{proposition-case3a}
In Case 3a, $N=1$.
\end{proposition}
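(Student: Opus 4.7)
The plan is to use Proposition~\ref{proposition6} (the rotationally symmetric limit $-c\log|z|$) together with the $\mu_E$-symmetry of $\whM_n$ and an Alexandrov moving-plane argument to force the blown-up catenoidal necks to coincide at a single point, thereby proving $N=1$. First I would identify the symmetries of $\whM_n$ in the blown-up coordinates $\eta=\zeta/\mu_n$. A short computation with $\varphi(z)=(z-i)/(1-iz)$ shows that $\mu_E\colon z\mapsto 1/\bar z$ conjugates to the reflection $\eta\mapsto\bar\eta$ across the real $\eta$-axis and that $\rho_Y$ conjugates to $\eta\mapsto-\bar\eta$ across the imaginary axis, while the fixed sets of $\rho_X$, $\rho_Z$, $\rho_{Z^*}$ drift off to infinity as $\mu_n\to 0$. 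The neck centres $\whp_{j,n}=\varphi(p_{j,n})/\mu_n$ all lie on the imaginary $\eta$-axis, they satisfy $\whp_{N+1-j,n}=-\whp_{j,n}$ by the $\mu_E$-pairing, and each tends to $0$ because $|p_{j,n}-i|\leq t_n^{\beta}$ with $\mu_n=t_n^\alpha$ and $\alpha<\beta$.

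Next, I would upgrade the radial limit of Proposition~\ref{proposition6} into an approximate rotational invariance of $\whM_n$ away from $\eta=0$. In any fixed annulus $A_{\epsilon,R}=\{\epsilon\leq|\eta|\leq R\}$, the normalised graphing function $\frac{|\log t_n|}{\wht_n}(\whu_n-\whu_n(z_0))$ converges smoothly to $-c(\log|z|-\log|z_0|)$, so the difference between $\whM_n$ and any rotate of itself about the vertical axis through $\eta=0$ is a small normal graph solving the linearised minimal-surface equation with boundary data of order $o(1)$.

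Then I would carry out the Alexandrov moving-plane step. For $\theta\in[0,\pi/2]$, let $\Pi_\theta$ be the vertical plane containing the $t$-axis above $\eta=0$ and making angle $\theta$ with the real $\eta$-axis; the planes $\Pi_0$ and $\Pi_{\pi/2}$ are symmetry planes of $\whM_n$ (by $\mu_E$ and by $\rho_Y\circ\mu_E$ respectively). Choosing $A_{\epsilon,R}$ so that $\{\whp_{j,n}\}\subset\{|\eta|<\epsilon\}$ for large $n$, I would reflect $\whM_n\cap(A_{\epsilon,R}\times\R)$ across $\Pi_\theta$, use the approximate rotational symmetry to control the boundary discrepancy, and invoke the maximum principle plus connectedness to show that the set of $\theta\in[0,\pi/2]$ at which $\Pi_\theta$ is a symmetry plane of the limit is both open and closed, hence equal to all of $[0,\pi/2]$. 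Passing to the limit in $n$ and then shrinking $\epsilon\to 0$, the limit surface is rotationally invariant about the vertical axis through $\eta=0$; but the ordered discrete set $\{\whp_{j,n}\}$ on the imaginary axis is compatible with rotational invariance of the limit only if it collapses to a single point, which forces $N=1$.

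The main obstacle is the rigorous localisation of the Alexandrov step, since the necks accumulate at the very point about which one wishes to prove rotational symmetry. The moving-plane argument must be executed on the annulus $A_{\epsilon,R}$ rather than globally, and the first-contact analysis has to be arranged so that contact cannot occur on $\{|\eta|=\epsilon\}$; this is where the explicit boundary behaviour supplied by Proposition~\ref{proposition6} is essential. The $\mu_E$-symmetry provides both the starting symmetry plane $\Pi_0$ and the pairing $\whp_{N+1-j,n}=-\whp_{j,n}$, neither of which is available in the odd-genus case.
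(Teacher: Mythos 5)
Your overall instinct---combine Proposition~\ref{proposition6} with the $\mu_E$-symmetry and an Alexandrov reflection---is the right one, but the argument as structured has a fatal gap at the last step. You apply the moving planes to the \emph{limit} of $\whM_n$ on a fixed annulus $A_{\epsilon,R}$ and then conclude that rotational invariance of that limit forces the set $\{\whp_{j,n}\}$ to ``collapse to a single point.'' But in Case 3a this collapse happens automatically whatever $N$ is: by hypothesis $|p_{j,n}-i|\leq t_n^{\beta}$ and $\mu_n=t_n^{\alpha}$ with $\alpha<\beta$, so \emph{all} $\whp_{j,n}\to 0$, as you yourself note in your first paragraph. The necks live entirely inside $\{|\eta|<\epsilon\}$, where your reflection argument says nothing, and a rotationally invariant limit on the annulus is perfectly consistent with $N$ distinct necks at finite $n$ all converging to the same point. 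So the conclusion $N=1$ does not follow from any statement about the limit; one needs an \emph{exact} statement at finite $n$. That is what the paper extracts: Proposition~\ref{proposition6} shows the level curves of $\whf_n$ are convex for $n$ large, which back at the original scale produces a curve $\gamma_n$ enclosing all $N$ necks, invariant under $\mu_E$, and graphical over the cylinder $E\times\R$ on each side. Applying Proposition~\ref{proposition-alexandrov} to the compact piece of $M_n$ bounded by $\gamma_n\cup\rho_Y(\gamma_n)$ (at finite $n$, not in the limit) shows this piece is a graph on each side of $E\times\R$; a catenoidal neck is not graphical in this sense unless it is bisected by $E\times\R$, so every $p_{j,n}$ must lie exactly on $E\cap Y^+=\{i\}$, and since the $p_{j,n}$ are distinct this forces $N=1$.

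Two secondary problems. First, $\Pi_{\pi/2}$ is not a symmetry plane of $\whM_n$: the composition $\rho_Y\circ\mu_E$ is $(z,t)\mapsto(-1/z,-t)$, which reverses the height and so is not a reflection in a vertical cylinder; the surface carries only the single reflection symmetry $\mu_E$ among the family $E_\theta\times\R$, which is why the paper runs the standard sweep of Proposition~\ref{proposition-alexandrov} rather than an ``open and closed in $\theta$'' continuation (which in any case would not be the Alexandrov method: two, or even finitely many, exact reflection symmetries do not imply rotational invariance). Second, you use Proposition~\ref{proposition6} to get approximate rotational invariance of the surface, whereas its actual role is to guarantee convexity of the level curves of $\whf_n$, i.e.\ to manufacture the graphical boundary curve that the hypothesis of Proposition~\ref{proposition-alexandrov} requires.
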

Proof: From \eqref{equation-wn},
$$w_n(z)
=\frac{1}{2\pi}\left(\frac{\pi}{2}+O(\mu_n)\right)
=\frac{1}{4}(1+O(t_n^{\alpha})).$$
Since $\alpha>0$, $t_n^{\alpha}\to 0$
so using Equation \eqref{EQ*1} of
Proposition \ref{proposition6},
$$\whf_n(z)-\frac{\wht_n}{4}+\whu_n(z_0)\simeq  c\frac{\wht_n}{|\log t_n|}(\log |z|-\log|z_0|).$$
From this we conclude that for $n$ large enough,
the level curves of $\whf_n$ are convex.
Back to the original scale, we have found a horizontal convex curve $\gamma_n$
which encloses $N$ catenoidal necks and is invariant under reflection in the vertical cylinder $E\times\R$.
In particular, this curve $\gamma_n$ is a graph on each side of
$E\times\R$. Consider the domain on $M_n$
which is bounded by $\gamma_n$ and its symmetric image with respect to the $Y$-circle.
By Alexandrov reflection (see Appendix \ref{appendix-alexandrov}), this domain must be symmetric with respect to the vertical cylinder $E\times\R$ -- which we already know -- and must be a graph on each side of $E\times\R$. This implies that the centers of all necks must be on the circle $E$. But $E\cap Y^+$ is a single point.
Hence there is only one neck: $N=1$.\cqfd
\subsection{Case 3b}
\label{section42}
In this case we take $\mu_n=|p_{1,n}-i|$.
Passing to a subsequence, the limits
$$\whp_j=\lim \whp_{j,n}\in[\frac{-i}{2},\frac{i}{2}]$$
exist for all $j\in[1,N]$. Moreover, we have
$$
\whp_1=\frac{-i}{2} \text{ and } \whp_N=\frac{i}{2}.
$$
(The $\frac{1}{2}$ comes from the fact that the rotation $\varphi$ distorts euclidean lengths by the factor $\frac{1}{2}$ at $i$.)
Let $m$ be the number of distinct points amongst $\whp_1,\cdots,\whp_N$.
Observe that $m\geq 2$ because we know that $\whp_1$ and $\whp_N$
are distinct.
Relabel the points so that $\whp_1,\cdots,\whp_m$ are distinct and
$$\Im \whp_1<\Im \whp_2\cdots<\Im \whp_m.$$
\begin{proposition}
\label{proposition7}
In Case 3b, passing to a subsequence,
$$\wtu(z):=\lim\frac{|\log t_n|}{\wht_n}\left(\whu_n(z)-\whu_n(z_0)\right)=
\sum_{i=1}^m -\whc_i(\log|z-\whp_i|-\log|z_0-\whp_i|).$$
The convergence is the uniform smooth convergence on compact subsets of $\C$
minus the points $\whp_1,\cdots,\whp_m$.
(Here $z_0$ is an arbitrary fixed complex number different from these points.)
The constants $\whc_i$ are positive.
\end{proposition}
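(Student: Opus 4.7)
The plan is to follow closely the strategy outlined for Case 3a in Proposition~\ref{proposition6}, with the single log singularity at the origin replaced by logarithmic singularities at the distinct finite points $\whp_1,\ldots,\whp_m\in[-i/2,i/2]$. Because the normalized function $(|\log t_n|/\wht_n)\whu_n$ is expected to diverge pointwise (since $|\whu_n|\leq\wht_n/2$ is exactly the correct scaling of $\whu_n$), working with differences $\whu_n-\whu_n(z_0)$ is mandatory. As the remark following Proposition~\ref{proposition6} indicates, the cleanest way to implement this is through the complex derivative $g_n:=\whu_{n,z}$ and the Cauchy--Pompeiu representation, rather than through the maximum principle as in Case~1.

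Since $\mu_n\gg t_n^\beta$ for every $\beta>0$, we have $|\log\wht_n|\sim|\log t_n|$, so the two normalizations may be used interchangeably. The first step is to transplant the Schauder estimate of Lemma~\ref{lemma1} to the blown-up picture to obtain $|\Delta\whu_n|\leq C\wht_n^{\,3}/\widehat\delta^{\,4}$ on a domain of the form $D(0,R)\setminus\bigcup_i D(\whp_{i,n},\wht_n^{\,\alpha})$ for some fixed large $R$ and some $0<\alpha<1/2$. Applying the Cauchy--Pompeiu formula to $g_n$ on this domain yields
\[
g_n(z)=g^+(z)+\sum_{i=1}^m g_i^-(z)+\frac{1}{2\pi i}\int\frac{g_{n,\overline w}(w)}{w-z}\,dw\wedge d\overline w,
\]
with $g^+$ holomorphic in $D(0,R)$, $g_i^-(z)=\sum_{k\geq 1}a_{i,k}/(z-\whp_{i,n})^k$, and a bulk term of size $O(\wht_n^{\,2-2\alpha})$. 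Cauchy estimates combined with the uniform gradient bound $|g_n|=O(\wht_n^{\,1-\alpha})$ from Proposition~\ref{proposition-schauder} give $|a_{i,k}|=O(\wht_n^{\,1+(k-1)\alpha})$; after multiplication by $|\log t_n|/\wht_n$, only the simple poles survive, with $2\pi i a_{i,1}$ equal up to lower order to the vertical flux $\widehat\phi_{i,n}$ by the formula of Appendix~\ref{appendix-flux}.

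Set $\whc_i:=\lim(|\log t_n|/\wht_n)\widehat\phi_{i,n}/(2\pi)$ after passing to a subsequence. The height estimate of Appendix~\ref{appendix-height}, applied just as in the proof of Proposition~\ref{proposition2} to each cluster of catenoidal necks converging to $\whp_i$, gives $\widehat\phi_{i,n}\geq C\wht_n/|\log\wht_n|$, whence $\whc_i>0$; finiteness of $\whc_i$ follows from $\widehat\phi_{i,n}=O(\wht_n)$, valid in Case~3b because the $\whp_{i,n}$ remain at bounded distance from each other. Integrating $g_n$ from $z_0$ to $z$ and taking real parts, the simple-pole contributions assemble into $-\sum_i\whc_i(\log|z-\whp_i|-\log|z_0-\whp_i|)$ in the limit, plus a holomorphic remainder $2\Re(H^+(z)-H^+(z_0))$ with $H^+$ a primitive of $G^+:=\lim(|\log t_n|/\wht_n)g^+$.

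The main obstacle is to show $G^+\equiv 0$. I would use two inputs. First, the symmetry $\whu_n(\overline z)=\whu_n(z)$ (the image of $\mu_E$ under $\widehat\varphi$) forces $H^+$ to take real values on the real axis, so $G^+$ is real on $\R$. Second, the Dirichlet condition $u_n=0$ on $\R$ transports, under $\varphi$ and the scaling by $1/\mu_n$, to $\whu_n=0$ on the circle $|z|=1/\mu_n$; together with $|\whu_n|\leq\wht_n/2$, this yields a uniform bound $|\wtu_n|\leq|\log t_n|/2$ that matches the explicit $-\sum_i\whc_i\log|z|$ growth of the log-sum at infinity. Combining these, $2\Re H^+$ is a bounded harmonic function on $\C$, hence constant by Liouville, and so $G^+\equiv 0$. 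This step is delicate because, unlike in Case~1, no explicit harmonic barrier analogous to $H_t$ in Lemma~\ref{lemma3} is available to control the growth at infinity, and one must rely on the flux decomposition together with the far Dirichlet condition to exclude any non-constant holomorphic correction.
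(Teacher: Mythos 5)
Your overall architecture --- pass to the complex derivative $\whu_{n,z}$, apply Cauchy--Pompeiu, identify the residues with vertical fluxes, and use the height estimate for positivity --- is the same as the paper's. But there is a genuine gap at the step you yourself flag as the main obstacle, namely showing $G^+\equiv 0$, and it is caused by a choice the paper does not make: you apply Cauchy--Pompeiu on a \emph{fixed} bounded domain $D(0,R)$. On the outer circle $C(0,R)$ the only pointwise information available is $|\whu_{n,z}|=O(\wht_n)$ (Proposition \ref{proposition-schauder}), so after multiplying by $|\log t_n|/\wht_n$ the boundary integral defining $g^+$ is a priori only $O(|\log t_n|)$, i.e.\ divergent; and your proposed Liouville argument rests on the bound $|\wtu_n|\le|\log t_n|/2$, which also diverges and therefore cannot produce a bounded (or even uniformly $O(\log|z|)$) harmonic function on $\C$ to which Liouville applies. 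The paper avoids the holomorphic remainder altogether by running Cauchy--Pompeiu on the exhausting domains $\whU_n=\varphi_n(U_n)$, whose outer boundary $\frac{1}{\mu_n}\varphi(C(i,\varepsilon))$ recedes to infinity; the outer boundary term then tends to zero because of the integral estimate $\int_{\partial U_n}|\nabla u_n|\le Ct_n/|\log t_n|$ of Section \ref{section43}, itself derived from Proposition \ref{proposition1} applied at $p_1=i$ together with Proposition \ref{proposition-height2} (used to choose the radii $r'_{j,n}$). This $L^1$ boundary gradient estimate is the real engine of the proof and is absent from your proposal; without it, or an equivalent uniform bound $|\wtu_n(z)|\le C(1+\log^+|z|)$, the entire part cannot be excluded, and your appeal to ``the far Dirichlet condition'' does not supply it.

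Two smaller points. Positivity of $\whc_i$ cannot be obtained by applying the proof of Proposition \ref{proposition2} ``just as'' before: in Case 3b the clusters are separated only by $O(\mu_n)=o(1)$, so one must first show that the level set $u_n=t_n/8$ splits into components each enclosing a single cluster; this is exactly what Lemma \ref{lemma5} (with $\beta=\frac{1}{18(N+2)}$) is proved for, and your proposal omits it. Finiteness of $\whc_i$ needs $\widehat{\phi}_{i,n}=O(\wht_n/|\log t_n|)$ (which follows from Proposition \ref{proposition1}), not merely $\widehat{\phi}_{i,n}=O(\wht_n)$ as you state. Finally, excising disks of radius $\wht_n^{\alpha}$ only around the $m$ cluster representatives is unsafe, since the other points of a cluster converge to $\whp_i$ at an unknown rate and may lie outside those disks; the paper excises a shrinking disk around each of the $N$ points.
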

The proof of this proposition is in Section \ref{section45}.
\medskip

Fix some small number $\varepsilon>0$.
Let $F_n$ be the flux of the Killing field $\chi_Y$ on the circle $C(\whp_1,\varepsilon)$
on $\whM_n$.
Because of the scaling we are in $\S^2(1/\mu_n)\times\R$ so
$$\chi_Y(z)=\frac{i}{2}(1-\mu_n^2 z^2).$$
Hence  using Proposition \ref{proposition-flux2}
in Appendix \ref{appendix-flux},
\begin{equation}
\label{eq-david2}
F_n=-\Im\int_{C(\whp_1,\varepsilon)}
2\left(\wht_n w_{n,z}-\whu_{n,z}\right)^2\frac{i}{2}(1-\mu_n^2 z^2)+O((\wht_n )^4).
\end{equation}
Expand the square. Then as in Case 1, the cross-product term
can be neglected, so the leading term is the one involving $(\whu_{n,z})^2$
and since $\mu_n\to 0$:
\begin{proposition}
\label{proposition8}
\begin{equation}
\label{EQ*3}
\lim \left(\frac{\log t_n}{\wht_n}\right)^2 F_n=
-\lim \left(\frac{\log t_n}{\wht_n}\right)^2\Re\int_{C(\whp_1,\varepsilon)}(\whu_{n,z})^2 dz.
\end{equation}
\end{proposition}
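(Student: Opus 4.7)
The strategy parallels Proposition~\ref{proposition3}. Expanding the square in \eqref{eq-david2} gives
\[
F_n = -\Re\int_{C(\whp_1,\varepsilon)}(1-\mu_n^2 z^2)\bigl[\wht_n^2(w_{n,z})^2 - 2\wht_n w_{n,z}\whu_{n,z} + (\whu_{n,z})^2\bigr]dz + O(\wht_n^4),
\]
so after multiplying by $(\log t_n/\wht_n)^2$ we must analyze three integrals. A direct computation from \eqref{equation-wn} yields $w_{n,z} = -\mu_n/[2\pi(1+\mu_n^2 z^2)]$, hence $(w_{n,z})^2(1-\mu_n^2 z^2)$ is holomorphic in a neighborhood of $\overline{D(\whp_1,\varepsilon)}$ for $n$ large and its contour integral vanishes identically. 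For the $(\whu_{n,z})^2$ term, Proposition~\ref{proposition7} gives smooth convergence of $(|\log t_n|/\wht_n)\whu_n$ on compact subsets of $\C\setminus\{\whp_1,\dots,\whp_m\}$, so $(|\log t_n|/\wht_n)\whu_{n,z}\to\wtu_z$ uniformly on $C(\whp_1,\varepsilon)$; together with $(1-\mu_n^2 z^2)\to 1$ uniformly (since $\mu_n\to 0$), this shows that replacing $(1-\mu_n^2 z^2)$ by $1$ inside this term does not change the limit. So the assertion reduces to showing that the cross term contributes $o(\wht_n^2/(\log t_n)^2)$ to the real part.

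For the cross term I would adapt the Laurent-series argument of Section~\ref{section23}. Set $g_n=\whu_{n,z}$ and apply Proposition~\ref{proposition-laurent} on an annular domain around $\whp_1$ containing all the $\whp_{j,n}$ that cluster there: $g_n = g^+ + \sum_{j\in J} g_j^- + R_n$, with $g^+$ holomorphic in $D(\whp_1,\varepsilon)$, $g_j^- = \sum_{k\geq 1} a_{j,k}(z-\whp_{j,n})^{-k}$, and $R_n$ the Cauchy--Pompeiu remainder controlled by $g_{n,\overline z} = \tfrac14\Delta\whu_n$. Because $w_{n,z}(1-\mu_n^2 z^2)$ is holomorphic inside $C(\whp_1,\varepsilon)$, its integral against $g^+$ vanishes by Cauchy's theorem, and each principal-part pole contributes $2\pi i a_{j,k}$ times a derivative of $w_{n,z}(1-\mu_n^2 z^2)$ at $\whp_{j,n}$. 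For $k=1$, Proposition~\ref{proposition-real-residue} gives that $a_{j,1}$ is real; since the $\whp_{j,n}$ are imaginary (the rotation $\varphi$ sends the imaginary axis into itself and $\mu_n$ is real), the numbers $1+\mu_n^2\whp_{j,n}^2$ and $1-\mu_n^2\whp_{j,n}^2$ are both real, so $w_{n,z}(\whp_{j,n})(1-\mu_n^2\whp_{j,n}^2)$ is real and the $k=1$ contribution is pure imaginary, killed by $\Re$ exactly as in \eqref{equation-aj1}. The higher-order coefficients $a_{j,k}$, $k\geq 2$, and the remainder $R_n$ are estimated as in Section~\ref{section23}, using a sup bound $|g_n|\leq C\wht_n^{1-\alpha}$ on the chosen annulus together with a Schauder estimate on $\Delta\whu_n$, and summing a geometric series to obtain a total contribution that is $o(\wht_n^2/(\log t_n)^2)$.

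The main obstacle is thus rerunning the Schauder/gradient estimates for $\whu_n$ in the Case~3b rescaling so as to supply the analogs of Lemma~\ref{lemma1} and Proposition~\ref{proposition-schauder} at the right powers of $\wht_n$ on an annulus around $\whp_1=-i/2$; these will already have been established in the course of proving Proposition~\ref{proposition7}. Once they are in hand, the proof of Proposition~\ref{proposition8} becomes a near-verbatim transcription of Section~\ref{section23}, the only genuinely new input being the elementary observation that the holomorphic factor $w_{n,z}(1-\mu_n^2 z^2)$ takes real values at the imaginary points $\whp_{j,n}$, which forces the $k=1$ Laurent contribution to be pure imaginary.
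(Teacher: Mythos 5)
Your proposal is correct and follows essentially the same route as the paper: reduce to showing the cross term is $o(\wht_n/(\log t_n)^2)$, rerun the Cauchy--Pompeiu/Laurent decomposition of Section~\ref{section23}, and use that $a_{j,1}$ is real (Proposition~\ref{proposition-real-residue}) together with the fact that $w_{n,z}(1-\mu_n^2z^2)$ is real at the imaginary points $\whp_{j,n}$ to kill the $k=1$ contribution, the higher-order coefficients and remainder being handled by the same Schauder-type bounds. Your closed form $w_{n,z}=-\mu_n/[2\pi(1+\mu_n^2z^2)]$ is the correct simplification of the paper's expression and cleanly justifies both the vanishing of the $(w_{n,z})^2$ term and the reality observation.
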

This proposition is proved in Section \ref{section46}. The proof is similar to Proposition
\ref{proposition3}.

\medskip

We now prove
\begin{proposition}
\label{proposition-case3b}
Case 3b is impossible.
\end{proposition}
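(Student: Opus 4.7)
The plan is to repeat the Case~1 strategy at the scale chosen in Section~\ref{section42}: combine the vanishing of the horizontal flux $F_n$ (from the $\rho_Y$-symmetry inherited by $\whM_n$) with the asymptotic formula of Proposition~\ref{proposition8}, compute the limiting contour integral by residues using Proposition~\ref{proposition7}, and show that the answer is strictly positive. The contradiction rules out Case~3b.

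First, the analogue of Lemma~\ref{lemma*8}: $F_n=0$. The surface $\whM_n$ inherits the $\rho_Y$-invariance of $M_n$, and by Proposition~\ref{proposition*1}, statement~(4), $(\rho_Y)_*$ acts as $-1$ on $H_1$; so if $\widehat C_n$ denotes the graph of $\whf_n$ over $C(\whp_1,\varepsilon)$, then $\widehat C_n\cup\rho_Y(\widehat C_n)$ bounds the compact piece of $\whM_n$ containing the cluster of catenoidal necks accumulating at $\whp_1$. Since $\chi_Y$ is a Killing field, is $\rho_Y$-invariant, and $\rho_Y$ preserves the orientation of $\whM_n$, the two boundary fluxes coincide and sum to zero, forcing $F_n=0$.

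Combining Propositions~\ref{proposition7} and~\ref{proposition8} next gives
\[
0 \;=\; \lim_{n\to\infty}\!\left(\frac{\log t_n}{\wht_n}\right)^{\!2}\!F_n \;=\; -\Re\!\int_{C(\whp_1,\varepsilon)}(\wtu_z)^2\,dz,
\qquad
\wtu_z(z) \;=\; -\frac12\sum_{i=1}^{m}\frac{\whc_i}{z-\whp_i}.
\]
Expanding the square, the double-pole term $\whc_1^2/(4(z-\whp_1)^2)$ is residue-free, so the only contribution to the residue inside $C(\whp_1,\varepsilon)$ comes from the cross terms:
\[
\Res_{\whp_1}(\wtu_z)^2 \;=\; \frac{\whc_1}{2}\sum_{j=2}^{m}\frac{\whc_j}{\whp_1-\whp_j}.
\]

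Writing $\whp_j=iy_j$ with $y_1=-\tfrac12<y_2<\cdots<y_m=\tfrac12$, one has $1/(\whp_1-\whp_j)=i/(y_j-y_1)$, so the residue theorem produces
\[
-\Re\!\int_{C(\whp_1,\varepsilon)}(\wtu_z)^2\,dz \;=\; \pi\,\whc_1\sum_{j=2}^{m}\frac{\whc_j}{y_j-y_1}.
\]
Since $m\ge 2$ (because $\whp_1\ne\whp_m$ by the discussion preceding Proposition~\ref{proposition7}), all $\whc_i>0$ by that same proposition, and $y_j-y_1>0$ for every $j\ge 2$, this quantity is strictly positive, contradicting the vanishing obtained above. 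The real labor lies entirely in establishing Propositions~\ref{proposition7} and~\ref{proposition8}; once they are available, the sign in the residue formula is forced — $\whp_1$ sits at the extreme bottom of the cluster, so every particle above it attracts it in the same direction, exactly as in the ``gravitational'' picture of Section~\ref{section-physics}, and no equilibrium is possible.
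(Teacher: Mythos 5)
Your proposal is correct and follows essentially the same route as the paper: cite the vanishing of the $\chi_Y$-flux (Lemma~\ref{lemma*8} transplanted to $\whM_n$), invoke Propositions~\ref{proposition7} and~\ref{proposition8}, and evaluate the contour integral by the residue at $\whp_1$, whose sign is forced because $\whp_1$ lies at the bottom of the cluster and all $\whc_i>0$. The residue computation and the final positivity argument match the paper's exactly.
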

Proof: 
According to Lemma \ref{lemma*8}, the flux $F_n$ is equal to zero.
Hence the left-hand side of \eqref{EQ*3} is zero.
By Propositions \ref{proposition7} and \ref{proposition8},
\begin{equation}
\label{EQ*4}
0=-\Re\int_{C(\whp_1,\varepsilon)}(\wtu_z)^2
\end{equation}
On the other hand,
$$\wtu_z=-\sum_{i=1}^m \frac{\whc_i}{2(z-\whp_i)}$$
$$\Res_{\whp_1}(\wtu_z)^2=\frac{1}{2}\sum_{i=2}^m \frac{\whc_1 \whc_i}{\whp_1-\whp_i}.$$
Write $\whp_i=i y_i$, then
$$-\int_{C(\whp_1,\varepsilon)}(\wtu_z)^2
=-\pi\sum_{i=2}^m\frac{\whc_1 \whc_i}{y_1-y_i}.$$
Since $m\geq 2$, $y_1<y_i$ for all $i\geq 2$ and $\whc_i>0$ for all $i$ by Proposition
\ref{proposition7}, this is positive, contradicting \eqref{EQ*4}.
\cqfd

\medskip
This completes the proof of the main theorem, modulo the proof of
Propositions \ref{proposition6}, \ref{proposition7} and \ref{proposition8}, which
were used in the analysis of Cases 3a and 3b.
We prove these propositions in Sections \ref{section44}, \ref{section45} and \ref{section46} respectively, using an estimate that we prove in the next section.
\subsection{An estimate of $\int |\nabla u_n|$}
\label{section43}
By Proposition \ref{proposition1}, we have, since all points
$p_{j,n}$ converge to $i$,
$$\lim \frac{|\log t_n|}{t_n}u_n=c_0\arg z-c_1\log\left|\frac{\log z-\log i}
{\log z+\log i}\right|.$$
Moreover, $c_1$ is positive by Proposition \ref{proposition2}.
The convergence is the smooth convergence on compact subsets of
$\CC\setminus\{i,-i\}$.
From this we get, for fixed $\varepsilon>0$,
\begin{equation}
\label{equation-gradient-integral}
\int_{C(i,\varepsilon)}|\nabla u_n|\leq C \frac{t_n}{|\log t_n|}.
\end{equation}
Let $i\in[1,n]$ be the index such that $r_{i,n}=\max\{r_{j,n}\,:\,1\leq j\leq N\}$.
Let $\phi_n=\phi_{i,n}$ be the vertical flux of $M_n$ on the graph of $f_n$ restricted to
$C(p_{i,n},\varepsilon)$. By the last point of Proposition \ref{proposition1}, we have
$$\phi_n\leq C \frac{t_n}{|\log t_n|}$$
for some constant $C$.
We use Proposition \ref{proposition-height2} with $r_1=\lambda r_{i,n}$ and $r_2=\varepsilon$ as in
the proof of Proposition \ref{proposition2}, and
$$r'_1=(t_n)^{1/4},\qquad r'_2=(t_n)^{1/8}$$
The proposition tells us that for each $j\in[1,N]$,
there exists a number $r$, which we call
$r'_{j,n}$, such that
\begin{equation}
\label{eq-david1}
(t_n)^{1/4}\leq r'_{j,n}\leq (t_n)^{1/8}
\end{equation}
and
$$\int_{C(p_{j,n},r'_{j,n})\cap\Omega_n}|\nabla f_n|\leq
\sqrt{8}\phi_n\left(\log\frac{\varepsilon}{\lambda r_{i,n}}\right)^{1/2}
\left(\log\frac{(t_n)^{1/8}}{(t_n)^{1/4}}\right)^{-1/2}.$$
Using \eqref{equation-minoration-rin}, we have
$$\log \frac{\varepsilon}{\lambda r_{i,n}}
\leq \log \frac{\varepsilon |\log t_n|}{\lambda C_1 t_n}\leq C_2|\log t_n|$$
for some positive constants $C_1$ and $C_2$. This gives
$$\int_{C(p_{j,n},r'_{j,n})\cap\Omega_n}|\nabla f_n|\leq C\phi_n\leq C\frac{t_n}{|\log t_n|}.$$
Now since $|\nabla \arg z|\simeq 1$ near $i$,
$$\int_{C(p_{j,n}, r'_{j,n})} t_n|\nabla \arg z|\leq C t_n^{1+1/8}=o(\frac{t_n}{|\log t_n|}).$$
Hence
$$\int_{C(p_{j,n},r'_{j,n})\cap\Omega_n}|\nabla u_n|\leq C\frac{t_n}{|\log t_n|}.$$
Consider the domain
\begin{equation}
\label{eqUn}
U_n=D(i,\varepsilon)\setminus\bigcup_{j=1}^N \overline{D}(p_{j,n},r'_{j,n}).
\end{equation}
Since $r'_{j,n}\gg t_n\gg r_{j,n}$, we have $\overline{U_n}\subset\Omega_n$ and
\begin{equation}
\label{equation-distance-bord}
d(U_n,\partial \Omega_n)\geq \frac{1}{2} (t_n)^{1/4}.
\end{equation}
Also, since $\partial U_n\subset\Omega_n$,
$$\partial U_n\subset C(i,\varepsilon)\cup\bigcup_{j=1}^N (C(p_{j,n},r'_{j,n})\cap\Omega_n).$$
This implies
\begin{equation}
\label{equation-integral-gradient}
\int_{\partial U_n} |\nabla u_n|\leq C\frac{t_n}{|\log t_n|}.
\end{equation}
This is the estimate we will use in the next sections.

\subsection{Proof of Proposition \ref{proposition6} (Case 3a)}
\label{section44}
Let $\beta>0$ be the number given by the hypothesis of case 3a.
Recall that we have fixed some positive number $\alpha$ such that
$0<\alpha<\min\{\beta,\frac{1}{8}\}$, that $\mu_n=t_n^{\alpha}$,
$\wht_n=\frac{t_n}{\mu_n}$ and
$\varphi_n=\frac{1}{\mu_n}\varphi$.
Let $U_n$ be the domain defined in \eqref{eqUn}
and $\whU_n=\varphi_n(U_n)$.
Since $\mu_n\gg t_n^{1/8}\geq r'_{j,n}$
by \eqref{eq-david1},
we have
$$\lim \whU_n=\C^*.$$
Since $\varphi_n$ is conformal, we have, using \eqref{equation-integral-gradient}
(recall the definition of $\whu_n$ in \eqref{equation-wn})
$$
\int_{\partial \whU_n}|\nabla \whu_n|
=\int_{\partial\whU_n}\frac{1}{\mu_n}|\nabla (u_n\circ\varphi_n^{-1})|
=\frac{1}{\mu_n}\int_{\partial U_n}|\nabla u_n|\leq C\frac{t_n}{\mu_n|\log t_n|}=C\frac{\wht_n}{|\log t_n|}.
$$
Using \eqref{equation-distance-bord}, we have
$$d(\whU_n,\partial\whOmega_n)\geq \frac{(t_n)^{1/4}}{4\mu_n}.$$
By Proposition \ref{proposition-schauder} in Appendix \ref{appendix-schauder}
(Interior gradient and Laplacian estimate)
$$|\Delta \whu_n|=|\Delta \whf_n|\leq C \frac{(\wht_n)^3}
{((t_n)^{1/4}/(4\mu_n))^{4}}=C\mu_nt_n^{2}\qquad\mbox{ in } \whU_n.$$
Let
$$\wtu_n=\frac{|\log t_n|}{\wht_n} (\whu_n-\whu_n(z_0)).$$
Proposition \ref{proposition6} asserts that a subsequence of the $\tilde u_n$ converge to
$-c(\log |z| -\log |z_0|)$, where $c$ is a real positive constant.
By the above estimates,
\begin{equation}
\label{estimate-wtun}
\int_{\partial \whU_n}|\nabla \wtu_n|\leq C
\end{equation}
and
\begin{equation}
\label{equation-Delta-wtun}
|\Delta \wtu_n|\leq C\mu_n^2 t_n|\log t_n|\qquad\mbox{ in } \whU_n.
\end{equation}
Let $K$ be a compact set of $\C^*$.
For $n$ large enough, $K$ is included in $\whU_n$.
The Cauchy Pompeieu integral formula
(Equation \eqref{eq-pompeieu} in Appendix \ref{appendix-laurent})
gives for $\zeta\in K$
$$\wtu_{n,z}(\zeta)=\frac{1}{2\pi i}\int_{\partial \whU_n}\frac{\wtu_{n,z}(z)}{z-\zeta}dz
+\frac{1}{8\pi i}\int_{\whU_n}\frac{\Delta\wtu_n(z)}{z-\zeta} dz\wedge\overline{dz}.$$
We estimate each integral in the obvious way, using \eqref{estimate-wtun}
in the first line and \eqref{equation-Delta-wtun} in the third line:
$$\left|\int_{\partial\whU_n}\frac{\wtu_{n,z}}{z-\zeta}\right|\leq\frac{1}{d(\zeta,\partial \whU_n)}\int_{\partial\whU_n}
|\nabla\wtu_n|\leq \frac{C}{d(\zeta,\partial \whU_n)}\to \frac{C}{|\zeta|}.$$
$$\int_{\whU_n}\frac{dx\,dy}{|z-\zeta|}\leq
\int_{D(0,\varepsilon/\mu_n)}\frac{dx\,dy}{|z-\zeta|}
\leq 2\pi\int_{r=0}^{2\varepsilon/\mu_n}\frac{rdr}{r}
=4\pi\frac{\varepsilon}{\mu_n}.$$
$$\left|\int_{\whU_n}\frac{\Delta\wtu_n}{z-\zeta} dx\,dy\right|
\leq C\mu_n t_n|\log t_n|\to 0.$$
Hence for $n$ large enough, we have in $K$
$$|\wtu_{n,z}(\zeta)|\leq\frac{C}{|\zeta|}$$
for a constant $C$ independent of $K$.
Passing to a subsequence, $\wtu_{n,z}$ converges smoothly on compact sets
of $\C^*$ to a holomorphic function with a zero at $\infty$ and at most a simple pole at $0$.
(The fact that the limit is holomorphic follows from \eqref{equation-Delta-wtun}.)
Hence $$\lim\wtu_{n,z}=\frac{c}{2z}$$
for some constant $c$.
Recalling that $(\log|z|)_z =\frac{1}{2z}$, this gives \eqref{EQ*1} of Proposition \ref{proposition6}.
It remains to prove that $c>0$.
Let $\widehat{\phi}_n$ be the vertical flux on the closed curve of $\whM_n$ that is the graph of
$\whf_n$ over the circle $C(0,1)\subset\C^*$. Then by the same computation
as at the end of Section \ref{section-proof-proposition1},
$$\lim\frac{|\log t_n|}{\wht_n}\widehat{\phi}_n=2\pi c.$$
Now by scaling and homology invariance of the flux,
$\widehat{\phi}_n=\frac{\phi_{1,n}}{\mu_n}$, where $\phi_{1,n}$ is
the vertical flux on the closed curve of $M_n$ that is the graph of $f_n$ over
the circle $C(i,\varepsilon)$. Hence
$c=c_1$ and $c_1$ is positive by Proposition \ref{proposition1}.
\cqfd
\subsection{Proof of Proposition \ref{proposition7} (Case 3b)}
\label{section45}
Recall that in Case 3b, $\mu_n=|p_{1,n}-i|$ and for all $\beta>0$, 
$\mu_n\geq t_n^{\beta}$ for $n$ large enough.
Let $U_n$ be the domain defined in \eqref{eqUn}.
Since $\mu_n\gg t_n^{1/8}\geq r'_{j,n}$
by \eqref{eq-david1},
we have
$$\lim \whU_n=\C\setminus \{\whp_1,\cdots,\whp_m\}.$$
(Compare with Case 3a, where the limit is $\C^*$.)
Define again
$$\wtu_n=\frac{|\log t_n|}{\wht_n} (\whu_n-\whu_n(z_0)).$$
By the same argument as in Section \ref{section44} we obtain that
$\wtu_{n,z}$ converges on compact subsets of $\C\setminus\{\whp_1,\cdots,\whp_m\}$
to a meromorpic function with at most simple poles at $\whp_1,\cdots,\whp_m$ and
a zero at $\infty$, so
$$\lim\wtu_{n,z}=\sum_{i=1}^m\frac{\whc_i}{2(z-\whp_i)}.$$
It remains to prove that the numbers $\whc_1,\cdots,\whc_m$ are positive.
For $1\leq i\leq m$, let $\whphi_{i,n}$ be the vertical flux of $\whM_n$ on the
graph of $\whf_n$ restricted to the circle $C(\whp_i,\varepsilon)$.
Then by the computation at the end of Section \ref{section-proof-proposition1}, we have
$$\lim\frac{|\log t_n|}{\wht_n}\whphi_{i,n}=2\pi \whc_i.$$
We will prove that $\whc_i$ is positive by estimating the vertical flux using the
height estimate as in Section \ref{section23}.
Take $\beta=\frac{1}{18(N+2)}$ and let
$$B_n=\bigcup_{i=1}^N D(p_{i,n},t_n^{\beta}).$$
By Lemma \ref{lemma5} with $\alpha=\frac{1}{2}$, we have for $n$ large enough:
$$u_n\leq (N+2)\frac{\beta}{\alpha}=\frac{t_n}{9}
\quad\text{ in } D(i,\varepsilon)\setminus B_n.$$
(Lemma \ref{lemma5} gives us this estimate for $|z|\leq 1$. The result follows because
$u_n$ is symmetric with respect to the unit circle).
Consequently, the level set
$u_n=\frac{t_n}{8}$ is contained in $B_n$.
By the hypothesis of Case 3b, for $n$ large enough, $\mu_n\gg t_n^{\beta}$ so
 the disks $D(p_{i,n},t_n^{\beta})$
for $1\leq i\leq m$ are disjoint. Hence $B_n$ has at least $m$ components.
Let $\Gamma_{i,n}$ be the component of the level set $u_n=\frac{t_n}{8}$ which
encloses the point $p_{i,n}$ and $D_{i,n}$ the disk bounded by $\Gamma_{i,n}$. Then $D_{i,n}$ contains no other point
$p_{j,n}$ with $1\leq j\leq m$, $j\neq i$. (It might contain points $p_{j,n}$ with
$j>m$).
The proof of Proposition \ref{proposition2} in Section \ref{section22}
gives us a point $p_{j,n}\in D_{i,n}$ (with either $j=i$ or $j>m$ and $\whp_j=\whp_i$) such that
$$r_{j,n}\geq C\frac{t_n}{|\log t_n|}$$
for some positive constant $C$.
Scaling by $1/\mu_n$,
this implies that
$$\whphi_{i,n}\geq 2\pi \frac{C}{2}\frac{\wht_n}{|\log t_n|}.$$
Hence $\whc_i>0$.
\cqfd
\subsection{Proof of Proposition \ref{proposition8} (Case 3b)}
\label{section46}
Let $g_n=\whu_{n,z}$.
We have to prove that the cross-product term in \eqref{eq-david2} can
be neglected, namely:
$$\Re \int_{C(\whp_1,\varepsilon)} w_{n,z}(z) g_n(z)(1-\mu_n^2z^2)dz
=o\left(\frac{\wht_n}{(\log t_n)^2}\right).$$
The proof of this fact is the same as the proof of Proposition \ref{proposition3}
in Section \ref{section23}, with the following modifications:
\begin{itemize}
\item $\arg z$ is replaced by the function $w_n$ defined in \eqref{equation-wn}, so its
derivative $\frac{1}{2 iz}$ is replaced by $w_{n,z}$.
\item $1-z^2$ is replaced by $1-\mu_n^2 z^2$.
\item $t_n$, $u_n$, etc... now have hats: $\wht_n$, $\whu_n$, etc...
\item From
$$w_{n,z}=\frac{1}{4\pi i}\left(\frac{1}{\mu_n z+i}-\frac{i}{1+i\mu_n z}\right)$$
we deduce that $|w_{n,z}|$ is bounded in $D(\whp_1,\varepsilon)$ and
since $\whp_{j,n}\in i\R$, that
$w_{n,z}(\whp_{j,n})$ is real, which is what we need to ensure that the term $a_{j,1}$ does
not contribute to the integral (see \eqref{equation-aj1}).
\end{itemize}
\cqfd

\appendix
\section{Auxiliary results}
This appendix contains several results about minimal surfaces
in $\S^2\times\R$ that have been used in the proof of Theorem \ref{theorem4}.
Some of these results are true for minimal surfaces in the Riemannian product
$M\times\R$ where $(M,g)$ is a 2-dimensional Riemannian manifold.
These results are local, so we can assume without loss of generality that
$M$ is a domain $\Omega\subset\C$ equipped with a conformal
metric $g=\lambda^2|dz|^2$, where $\lambda$ is a smooth positive function
on $\overline{\Omega}$. Given a function $f$ on $\Omega$, the graph of
$f$ is a minimal surface in $M\times\R$ if it satisfies the minimal surface equation
\begin{equation}
\label{mse}
\div_g \frac{\nabla_g f}{W}=0\quad\mbox{ with }
W=\sqrt{1+||\nabla_g f||^2_g}
\end{equation}
where the subscript $g$ means that the quantity is computed with respect to the
metric $g$, so for instance
$$\nabla_g f =\lambda^{-2}\nabla f,\qquad
\div_g X=\lambda^{-2}\div(\lambda^2 X).$$
In coordinates, \eqref{mse} gives the equation
\begin{equation}
\label{msecoord}
(1+\lambda^{-2} f_y^2)f_{xx}+(1+\lambda^{-2} f_x^2)f_{yy}-2\lambda^{-2}
f_x f_y f_{xy}+(f_x^2+f_y^2)\left(
\frac{\lambda_x}{\lambda} f_x+\frac{\lambda_y}{\lambda} f_y\right)=0.
\end{equation}
Propositions \ref{proposition-schauder}, \ref{proposition-flux2}, \ref{proposition-height}
and \ref{proposition-height2}
will be formulated in this setup.
\subsection{Interior gradient and Laplacian estimate}
\label{appendix-schauder}
\begin{proposition}
\label{proposition-schauder}
Let $\Omega$ be a domain in $\C$ equipped with a smooth conformal metric
$g=\lambda^2 |dz|^2$.
 Let $f:\Omega\to\R$ be a solution of
the minimal surface equation \eqref{mse}. Assume that $|f|\leq t$ in $\Omega$ and
$||\nabla f||\leq 1$. Then
$$||\nabla f(z)||\leq \frac{Ct}{d(z)}$$
$$|\Delta f(z)|\leq \frac{Ct^3}{d(z)^4}$$
for all $z\in\Omega$ such that $d(z)\geq t$.
Here, $d(z)$ denotes the euclidean distance to the boundary of $\Omega$.
The gradient and Laplacian are for the euclidean metric.
The constant $C$ only depends on the diameter of $\Omega$ and on a bound
on $\lambda$, $\lambda^{-1}$ and its partial derivatives of first and second order.
\end{proposition}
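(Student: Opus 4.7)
The plan is a rescaling reduction to classical interior estimates for uniformly elliptic equations, exploiting the fact that the gradient bound $\|\nabla f\|\leq 1$ makes the minimal surface equation uniformly elliptic. First I would fix $z_0\in\Omega$ with $d:=d(z_0)\geq t$ and set $\tilde f(w)=f(z_0+dw)/d$ for $w\in B(0,1)$. Since the ambient scaling $(z,h)\mapsto(z_0+dw,\,dh)$ is a conformal dilation of $M\times\R$, the function $\tilde f$ still satisfies the minimal surface equation \eqref{mse}, now with respect to the pulled-back metric $\tilde\lambda^2|dw|^2$, $\tilde\lambda(w)=\lambda(z_0+dw)$. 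The hypotheses give $\|\tilde f\|_\infty\leq t/d\leq 1$ and $\|\nabla\tilde f\|_\infty\leq 1$ on $B(0,1)$, and $\tilde\lambda$ together with its first and second derivatives is bounded uniformly in terms of the corresponding bounds for $\lambda$ and the diameter of $\Omega$.

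Next I would apply interior Schauder-type estimates to $\tilde f$. Because $\|\nabla\tilde f\|_\infty\leq 1$, the principal part of \eqref{msecoord} is uniformly elliptic with bounded $L^\infty$ coefficients. The main technical point is that MSE is quasilinear rather than linear, so one cannot invoke a linear-in-$\|\tilde f\|_\infty$ Schauder estimate directly; the remedy is the standard bootstrap. Each partial derivative $\partial_i\tilde f$ satisfies a linear divergence-form equation obtained by differentiating $\div_{\tilde g}(\nabla_{\tilde g}\tilde f/\tilde W)=0$, with uniformly elliptic bounded measurable coefficients, so De Giorgi--Nash--Moser theory yields a Hölder bound $\|\nabla\tilde f\|_{C^\alpha(B(0,3/4))}\leq C$ for some universal $\alpha\in(0,1)$. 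With $\nabla\tilde f$ now Hölder continuous, equation \eqref{msecoord} can be read as a linear uniformly elliptic equation $a^{ij}(w)\partial_{ij}\tilde f+b^i(w)\partial_i\tilde f=0$ with $C^\alpha$ coefficients of controlled norm, and the classical interior Schauder estimate for homogeneous linear equations gives
$$\|\tilde f\|_{C^{2,\alpha}(B(0,1/2))}\leq C\,\|\tilde f\|_{L^\infty(B(0,1))}\leq C\,t/d.$$

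Finally I would unscale and use the equation itself to obtain the Laplacian bound. Since $\nabla\tilde f(0)=\nabla f(z_0)$ and $D^2\tilde f(0)=d\cdot D^2 f(z_0)$, unscaling the previous estimate yields $\|\nabla f(z_0)\|\leq Ct/d$ and $\|D^2 f(z_0)\|\leq Ct/d^2$; the first is the asserted gradient estimate. Solving \eqref{msecoord} for $\Delta f=f_{xx}+f_{yy}$ gives
$$\Delta f=\lambda^{-2}\bigl(2f_xf_yf_{xy}-f_y^2f_{xx}-f_x^2f_{yy}\bigr)-(f_x^2+f_y^2)\bigl(\lambda^{-1}\lambda_xf_x+\lambda^{-1}\lambda_yf_y\bigr),$$
and evaluating at $z_0$ the first bracket is $O(|\nabla f|^2|D^2 f|)=O(t^3/d^4)$ while the second is $O(|\nabla f|^3)=O(t^3/d^3)$, which is absorbed into $O(t^3/d^4)$ via a factor of $\mathrm{diam}(\Omega)$. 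This yields $|\Delta f(z_0)|\leq Ct^3/d^4$ as claimed. So the only non-routine ingredient is the Hölder-to-Schauder bootstrap in the middle paragraph; everything else is bookkeeping.
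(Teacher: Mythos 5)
Your proposal is correct and follows essentially the same route as the paper: both arguments first upgrade the $L^\infty$ gradient bound to an interior H\"older gradient estimate (you via differentiating the equation and De Giorgi--Nash--Moser, the paper via the two-dimensional quasilinear estimate of Gilbarg--Trudinger, Theorem 12.4), then apply the linear interior Schauder estimate to bound $\|f\|_{C^2}$ by $C\|f\|_0\leq Ct$, and finally read off $|\Delta f|\leq C(|Df|^2|D^2f|+|Df|^3)$ from the equation, absorbing the $t^3/d^3$ term using the diameter bound. Your explicit rescaling to the unit ball is just the unweighted version of the paper's use of the weighted interior norms $[\,\cdot\,]^{(1)}_\alpha$, so the two proofs are the same in substance.
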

Proof.
Let us write the minimal surface equation \eqref{msecoord} as $L(f)=0$,
where $L$ is a second order linear elliptic operator whose
coefficients depend on $f_x$ and $f_y$.
Theorem 12.4 in Gilbarg-Trudinger gives us a uniform constant $C$
and $\alpha>0$ such that (with Gilbarg-Trudinger notation)
$$[Df]_{\alpha}^{(1)}\leq C||f||_0\leq Ct.$$
If $d(z,\partial\Omega)\geq t$, this implies
$$[Df]_{\alpha}^{(0)}\leq \frac{Ct}{t}=C.$$
Then we have the required $C^{\alpha}$ estimates of the coefficients of $L$ to apply the interior Schauder estimate (Theorem 6.2 in Gilbarg-Trudinger):
$$|D^kf(z)|\leq \frac{C}{d(z)^k} ||f||_{0}\leq C\frac{t}{d(z)^k},\qquad k=0,1,2.$$
The minimal surface equation \eqref{msecoord} implies
$$|\Delta f|\leq C(|Df|^2 |D^2f|+ |Df|^3)\leq C\frac{t^3}{d^4}.$$
\cqfd
\subsection{Alexandrov moving planes}
\label{appendix-alexandrov}
We may use the Alexandrov reflection technique in $\S^2\times\R$ with the role of horizontal planes played by the level spheres $\S^2\times\{t\}$, and the role of
vertical planes played by a family of totally geodesic cylinders.
Specifically, let
$E\subset\S^2\times\{0\}$ be the closed geodesic that is the equator with respect to the antipodal points $O$, $O^*$, let $X\subset\S^2\times\{0\}$ be a geodesic passing through
$O$ and $O^*$, and define $E_{\theta}$ to be the rotation of $E=E_0$ through an angle 
$\theta$ around the poles $E\cap X$. The family of geodesic cylinders
$$E_{\theta}\times\R, \quad -\pi/2\leq\theta<\pi/2,$$
when restricted to the complement of $(E\cap X)\times\R$ is a foliation.

\begin{proposition}
\label{proposition-alexandrov}
Let $\Gamma=\gamma_1\cup\gamma_2$ with each $\gamma_i$ a $C^2$
Jordan curve in $\S^2\times\{t_i\}$, $t_1\neq t_2$, that is invariant under reflection
in $\Pi=E\times\R$. Suppose further that each component of $\gamma_i\setminus\Pi$
is a graph over $\Pi$ with locally bounded slope. Then any minimal surface
$\Sigma$ with $\partial\Sigma=\Gamma$ that is disjoint from at least one of the
vertical cylinders $E_{\theta}\times\R$, must be symmetric with respect to reflection in
$\Pi$, and each component of $\Sigma\setminus\Pi$ is a graph of locally bounded slope
over a domain in $\Pi$.
\end{proposition}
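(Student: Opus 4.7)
I would apply the Alexandrov moving-plane method using reflections $R_\theta$ in the cylinders $\Pi_\theta := E_\theta\times\R$. The key structural facts are that each $E_\theta$ is a great circle, so $\Pi_\theta$ is totally geodesic in $\S^2\times\R$ and $R_\theta$ is an isometry; that $\{\Pi_\theta\}_{-\pi/2<\theta<\pi/2}$ foliates the complement of the axis $(E\cap X)\times\R$; and that $\Gamma$ is $R_0$-invariant while $\Sigma$ avoids some $\Pi_{\theta_0}$. After possibly swapping $\theta\leftrightarrow -\theta$, assume $\theta_0>0$. Since each $\gamma_i$ is compact, $R_0$-invariant, and graphical with bounded slope over $\Pi$, it stays in some $\{|\theta'|\le\theta_1<\theta_0\}$; combining this with the connectedness of $\Sigma$ and the fact that $\Sigma$ cannot cross $\Pi_{\theta_0}$, we obtain $\Sigma\subset\{\theta'<\theta_0\}$.

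\textbf{Moving the cylinder.} For $\theta\in[0,\theta_0]$ set $\Sigma^\pm_\theta=\Sigma\cap\{\pm(\theta'-\theta)\ge 0\}$. For $\theta$ just below $\theta_0$, the slab $\{\theta\le\theta'<\theta_0\}$ is thin, so $\Sigma^+_\theta$ is a small cap and its reflection $R_\theta\Sigma^+_\theta$ lies strictly on the $\Sigma^-_\theta$-side of $\Pi_\theta$. Let $\theta^*\in[0,\theta_0]$ be the infimum of those $\theta$ such that, for every $\theta'\in[\theta,\theta_0]$, $R_{\theta'}\Sigma^+_{\theta'}$ is contained in the closed region on the $\Sigma^-_{\theta'}$-side of $\Pi_{\theta'}$. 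The claim is that $\theta^*=0$. Suppose instead $\theta^*>0$. By continuity there is a first contact point $p$ between $R_{\theta^*}\Sigma^+_{\theta^*}$ and $\Sigma^-_{\theta^*}$. If $p$ lies in the interior of both surfaces, apply the interior strong maximum principle for the minimal surface equation; if $p$ lies on the common boundary (on $\Gamma\cap\Pi_{\theta^*}$ or on $\Sigma\cap\Pi_{\theta^*}$), the assumed bounded slope of $\gamma_i$ gives well-defined transverse tangent planes, and Hopf's boundary-point lemma applies. In either case the two surfaces coincide in a neighborhood of $p$, hence by unique continuation on the connected component of $\Sigma$ through $p$, forcing $\Sigma$ to be $R_{\theta^*}$-invariant and therefore $\Gamma$ to be $R_{\theta^*}$-invariant too. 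But the extremal $\theta'$-values of the $\gamma_i$ are symmetric only about $\theta'=0$, contradicting $\theta^*>0$.

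\textbf{Symmetry and graphicality.} With $\theta^*=0$ we have $R_0\Sigma^+_0$ contained in the $\Sigma^-_0$-side of $\Pi$. Applying the identical moving argument to $R_0\Sigma$ (another minimal surface with the same boundary $\Gamma$, disjoint from $\Pi_{-\theta_0}$) yields the reverse containment, whence $R_0\Sigma^+_0=\Sigma^-_0$ and $\Sigma=R_0\Sigma$. Moreover, the absence of interior contact for every $\theta\in(0,\theta_0]$ during the moving process means $\Sigma^+_\theta$ is transverse to each $\Pi_\theta$, so $\Sigma\cap\{\theta'>0\}$ projects injectively along the leaves of the foliation onto a domain in $\Pi$ and is a graph there; the slope bound is local and comes from the first-order transversality estimate implicit in the reflection argument. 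The statement for $\Sigma\cap\{\theta'<0\}$ follows by symmetry. The main obstacle I anticipate is the boundary case at first contact: Hopf's boundary lemma requires the boundary to have a well-defined tangent plane transverse to $\Pi_{\theta^*}$, and this is exactly where the bounded-slope hypothesis on the $\gamma_i$ is used — without it, tangential boundary contact could occur that the classical maximum principle does not exclude.
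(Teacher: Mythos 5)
Your proposal is the standard Alexandrov moving-plane argument run along the foliation by the totally geodesic cylinders $E_\theta\times\R$, which is exactly the approach the paper takes (the paper simply asserts that the proof is the classical one, citing Schoen, without writing out the details). The argument is correct as a proof sketch; the only points needing more care in a full write-up are the standard ones you already flag, namely starting the reflection for $\theta$ just below $\theta_0$ and the boundary/tangential first-contact cases handled by the Hopf lemma.
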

(Given a domain ${\mathcal O}\subset\Pi$ and a function 
$f:{\mathcal O}\to[-\pi/2,\pi/2)$, the graph of $f$ is the set of points
$\{\mbox{rot}_{f(p)} p\;:\; p\in{\mathcal O}\}$, where $\mbox{rot}_{\theta}$ is the rotational symmetry that takes
$\Pi$ to $E_{\theta}\times\R$.)
\medskip

The proof is the same as the classical proof for minimal surfaces in $\R^3$ using the maximum principle. (See for example Schoen \cite{schoen} Corollary 2.)

\subsection{Flux}
\label{appendix-flux}
Let $N$ be a Riemannian manifold, $M\subset N$ a minimal surface and $\chi$ a Killing field on $N$.
Let $\gamma$ be a closed curve on $M$ and $\mu$ be the conormal along $\gamma$.
Define
$$\mbox{Flux}_{\chi}(\gamma)=\int_{\gamma}\langle \mu,\chi\rangle ds.$$
It is well know that this only depends on the homology class of $\gamma$.
\begin{proposition}
\label{proposition-flux1}
In the case $N=\S^2(R)\times\R$,
the space of Killing fields is 4 dimensional. It is generated by
the vertical unit vector $\xi$, and the following three horizontal vectors fields:
$$\chi_X(z)=\frac{1}{2}(1+\frac{z^2}{R^2})$$
$$\chi_Y(z)=\frac{i}{2}(1-\frac{z^2}{R^2})$$
$$\chi_E(z)=\frac{iz}{R}$$
These vector fields are respectively unitary tangent to the great circles
$X$, $Y$ and $E$.
They are generated by the one-parameter families of rotations  about the poles whose equators are these great circles.
\end{proposition}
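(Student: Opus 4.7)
\medskip

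\textbf{Proof proposal.} The plan is to identify the isometry group of $\S^2(R)\times\R$, extract a 4-dimensional Lie algebra of Killing fields, and then verify the three explicit formulas by computing the infinitesimal generators of the corresponding one-parameter subgroups of $\mathrm{SO}(3)$.

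First I would establish the dimension count. The product metric on $\S^2(R)\times\R$ has isometry group $O(3)\times(O(1)\ltimes\R)$, whose identity component is $\mathrm{SO}(3)\times\R$. Since the space of Killing fields is canonically isomorphic to the Lie algebra of the identity component of the isometry group, it is 4-dimensional. The vertical unit field $\xi=\partial/\partial t$ generates the $\R$-summand and is a Killing field. The $\mathrm{SO}(3)$-summand acts trivially on the $\R$-factor, so its infinitesimal generators are horizontal Killing fields forming a 3-dimensional subspace complementary to $\R\xi$. A convenient basis is obtained by picking rotations about the three orthogonal axes in $\R^3$ corresponding to the great circles $X$, $Y$, and $E$.

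To obtain the formulas I would pull back the three standard rotational vector fields on $\S^2(R)\subset\R^3$ through the inverse stereographic projection $\pi\colon\C\cup\{\infty\}\to\S^2(R)$ underlying the model \eqref{equation-metric-S2}. Rotation about the $x_2$-axis fixes the great circle $\{x_2=0\}\cap\S^2(R)$ pointwise, which corresponds under $\pi$ to the real line $X$; its generator on $\S^2(R)$ is $x_3\,\partial_{x_1}-x_1\,\partial_{x_3}$, and a direct calculation of $(d\pi)^{-1}$ applied to this field yields the rational expression $\chi_X(z)=\tfrac12(1+z^2/R^2)$. The analogous computations for rotations about the $x_1$- and $x_3$-axes yield $\chi_Y$ and $\chi_E$ respectively; linear independence of $\xi,\chi_X,\chi_Y,\chi_E$ at any generic point is obvious, so these four fields span the space.

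Verification of the geometric assertions is then a short arithmetic check from the formulas: $\chi_X$ is real-valued on $X=\R$ and vanishes at the antipodal points $\pm iR$; $\chi_Y$ is purely imaginary on $Y=i\R$ and vanishes at $\pm R$; and $\chi_E(Re^{i\theta})=ie^{i\theta}$ is orthogonal to the radial direction and vanishes at $\{0,\infty\}$. For unit length in the metric $\lambda^2|dz|^2$, along $X$ one has
$$\lambda(x)\,|\chi_X(x)|=\frac{2R^2}{R^2+x^2}\cdot\frac{R^2+x^2}{2R^2}=1\qquad(x\in\R),$$
and the same algebra gives unit length for $\chi_Y$ on $Y$ and $\chi_E$ on $E$. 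The main technical obstacle is the stereographic pullback producing the rational formulas; this is routine but requires care with the coordinate change. A cleaner cross-check, which bypasses the stereographic algebra, is to verify directly that each proposed $\chi$ satisfies Killing's equation for $\lambda^2|dz|^2$ — equivalently, that its (Möbius) flow preserves $\lambda^2$ — a computation which also yields the unit-length statements with no extra work.
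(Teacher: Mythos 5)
Your proposal is correct and follows the same overall strategy as the paper: both arguments get the dimension count from the known 4-dimensional isometry group $\mathrm{SO}(3)\times\R$ (plus $\xi$ for the $\R$-factor) and then identify the three horizontal fields as infinitesimal generators of rotations. The difference is in how the explicit rational formulas are produced. You pull back each of the three standard rotation fields on $\S^2(R)\subset\R^3$ through the inverse stereographic projection (or, in your suggested variant, verify Killing's equation for the conformal factor directly); the paper instead reads off only the obvious field $\chi_E$, by differentiating the visibly isometric flow $z\mapsto e^{it}z$, and then transports it to $\chi_X$ and $\chi_Y$ by pushforward under explicit M\"obius isometries ($\varphi(z)=\frac{Rz+iR^2}{iz+R}$ and $\psi(z)=iz$), which reduces the work to a one-line chain-rule computation $\varphi_*\chi_E=\varphi'(\varphi^{-1}(z))\,\chi_E(\varphi^{-1}(z))$. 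Your route is more computational but more self-contained; the paper's is slicker but requires recognizing the M\"obius form of the quarter-turn about the $x$-axis. Your unit-length verifications $\lambda\,|\chi|=1$ along the respective great circles are correct and in fact give a clean independent confirmation of the normalization.

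One small slip: a rotation about the $x_2$-axis does not fix the great circle $\{x_2=0\}\cap\S^2(R)$ pointwise; it preserves it setwise (that circle is the \emph{equator} of the rotation, as in the statement of the proposition), while the pointwise-fixed set consists of the two poles $(0,\pm R,0)$. Also, the Euclidean generator $x_3\,\partial_{x_1}-x_1\,\partial_{x_3}$ has norm $R$ on that equator, so an implicit division by $R$ is needed to land on the stated $\chi_X$. Neither issue affects the validity of your argument, since your final length check is carried out for the normalized formulas themselves.
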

Proof: The isometry group of $\S^2(R)\times\R$ is well known to be 4-dimensional.
Recall that our model of $\S^2(R)$ is $\C\cup{\infty}$ with the conformal metric
$\frac{2R^2}{R^2+|z|^2}|dz|$.
By differentiating the 1-parameter group $z\mapsto e^{it}z$ of isometries of $\S^2$, we obtain the horizontal Killing field $\chi(z)=iz$, which suitably normalized gives $\chi_E$.
Let
$$\varphi(z)=\frac{Rz+iR^2}{iz+R}.$$
This corresponds, in our model of $\S^2(R)$, to the rotation about the $x$-axis of
angle $\pi/2$. It maps the great circle $E$ to the great circle $X$. We transport $\chi_E$ by this isometry
to get the Killing field $\chi_X$: a short computation gives
$$\chi_X(z)=\varphi_*\chi_E(z)=\varphi'(\varphi^{-1}(z))\chi_E(\varphi^{-1}(z))=\frac{z^2+R^2}{2R^2}.$$
Then we transport $\chi_X$ by the rotation $\psi(z)=iz$ to get the Killing field $\chi_Y$:
$$\chi_Y(z)=\psi_*\chi_X(z)=i \frac{(-iz)^2+R^2}{2R^2}.$$
\cqfd
\begin{proposition}
\label{proposition-flux2}
Let $\Omega\subset\C$ be a domain equipped with a conformal metric
$g=\lambda^2 |dz|^2$.
Let $f:\Omega\to\R$ be a solution of the minimal surface equation \eqref{mse}.
Let $\gamma$ be a closed, oriented curve in $\Omega$ and $\nu$ be the euclidean exterior normal vector along $\gamma$ (meaning that $(\gamma',\nu)$ is a negative orthonormal basis).
Let $M$ be the graph of $f$ and let $\widetilde{\gamma}$ be the closed curve in
$M$ that is the graph of $f$ over $\gamma$.
\begin{enumerate}
\item For the vertical unit vector $\xi$,
$$\mbox{Flux}_{\xi}(\widetilde{\gamma})=
\int_{\gamma}\frac{\langle \nabla f,\nu\rangle}{W}$$
where $W$ is defined in equation \eqref{mse}.
(Here the gradient, scalar product and line element are euclidean.)
If $||\nabla f||$ is small, this gives
$$\mbox{Flux}_{\xi}(\widetilde{\gamma})
=\Im\int_{\gamma} \left(2f_z + O(|f_z|^2)\right)dz$$
\item If $\chi$ is a horizontal Killing field, 
$$\mbox{Flux}_{\chi}(\widetilde{\gamma})=-\Im\int_{\gamma} \left(2(f_z)^2\chi(z) +O(|f_z|^4|)\right)dz.$$
\end{enumerate}
\end{proposition}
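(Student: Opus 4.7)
The plan is to derive both formulas by a direct computation of the conormal $\mu$ on the graph $M$, combined with a small-gradient expansion.

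First, I would identify $\mu$ explicitly. At a point of $\tilde\gamma$, the lifts $\tilde\tau_{0}=(\tau_{g},df(\tau_{g}))$ and $\tilde\nu_{0}=(\nu_{g},df(\nu_{g}))$ both lie in $TM$, where $\tau_{g}$ and $\nu_{g}$ are the $g$-unit tangent and exterior normal to $\gamma$. Gram-Schmidt applied to these produces
\[
\mu_{0}=\tilde\nu_{0}-\frac{df(\nu_{g})\,df(\tau_{g})}{\tilde W_{\tau}^{2}}\,\tilde\tau_{0},\qquad \tilde W_{\tau}^{2}=1+df(\tau_{g})^{2},
\]
and a short calculation gives $|\mu_{0}|_N^{2}=W^{2}/\tilde W_{\tau}^{2}$, so that $\mu=(\tilde W_{\tau}/W)\mu_{0}$. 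Combined with $ds_{\tilde\gamma}=\tilde W_{\tau}\,ds_{g}$, this yields clean expressions for $\langle\mu,\xi\rangle\,ds_{\tilde\gamma}$ and $\langle\mu,\chi\rangle\,ds_{\tilde\gamma}$.

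For the vertical Killing field $\xi$, only the vertical component $df(\nu_{g})/\tilde W_{\tau}^{2}$ of $\mu_{0}$ contributes; after multiplication by $\tilde W_{\tau}/W$ and by $\tilde W_{\tau}\,ds_{g}$, the factors of $\tilde W_{\tau}$ cancel and one gets $df(\nu_{g})/W\cdot ds_{g}$. Converting to Euclidean quantities via $\nu_{g}=\nu/\lambda$ and $ds_{g}=\lambda\,ds$ eliminates the conformal factor and produces the displayed formula $\int_{\gamma}\langle\nabla f,\nu\rangle/W\,ds$. The complex form then follows from $1/W=1+O(|f_{z}|^{2})$ together with the identity $\langle\nabla f,\nu\rangle\,ds=\Im(2f_{z}\,dz)$, which I would derive using $\nu\,ds=-i\,dz$ and $2f_{z}=f_{x}-if_{y}$.

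For a horizontal Killing field $\chi$, the same setup gives
\[
\langle\mu,\chi\rangle\,ds_{\tilde\gamma}=\frac{1}{W}\Bigl((1+df(\tau_{g})^{2})\,g(\chi,\nu_{g})-df(\nu_{g})df(\tau_{g})\,g(\chi,\tau_{g})\Bigr)ds_{g}.
\]
Expanding $1/W=1-\tfrac{1}{2}|\nabla_{g}f|_{g}^{2}+O(|f_{z}|^{4})$ and collecting terms, the zeroth-order contribution is $\int_{\gamma}g(\chi,\nu_{g})\,ds_{g}$; this vanishes because $\chi$, being a Killing field on $\S^{2}\times\R$, is divergence-free on the compact base $\S^{2}$ with no singularities, so the 1-form $g(\chi,\nu_{g})\,ds_{g}$ is exact on any disk of the simply connected cover $\CC$. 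For the quadratic term, introducing the Euclidean unit tangent $\omega=\tau_{\mathrm{euc}}$ and using $df(\tau_{g})+i\,df(\nu_{g})=2\lambda^{-1}f_{z}\omega$ together with $\chi^{\tau}+i\chi^{\nu}=\lambda\overline{\chi}\omega$, the combination $\tfrac{1}{2}(df(\tau_{g})^{2}-df(\nu_{g})^{2})\chi^{\nu}-df(\nu_{g})df(\tau_{g})\chi^{\tau}$ collapses, after applying $\Re a\,\Im b-\Im a\,\Re b=-\Im(a\bar b)$ with $a=(f_{z}\omega)^{2}$ and $b=\overline{\chi}\omega$, to $-2\lambda^{-1}\Im((f_{z})^{2}\chi\omega)$. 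Multiplying by $ds_{g}=\lambda\,ds$ and using $\omega\,ds=dz$ gives the integrand $-\Im(2(f_{z})^{2}\chi\,dz)$.

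The main obstacle will be the algebraic bookkeeping in the horizontal case: tracking the conformal factor $\lambda$ through each passage between $g$-orthonormal and Euclidean frames, and recognizing the exact complex reorganization needed to collapse the quadratic $(f^{\tau},f^{\nu},\chi^{\tau},\chi^{\nu})$ expression into the single term $(f_{z})^{2}\chi\,dz$. The vanishing of the zeroth-order term, although easy conceptually, also needs to be stated carefully in terms of the ambient sphere to justify the closed-curve cancellation independently of the topology of $\Omega$.
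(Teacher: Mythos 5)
Your proposal is correct and follows essentially the same route as the paper: an explicit computation of the conormal times the line element on the graph (the paper obtains $\mu\,ds$ via a cross product in the isometric model $(u_1,u_2,u_3)\mapsto(\lambda u_1,\lambda u_2,u_3)$ rather than by Gram--Schmidt, but the resulting expression is the same), followed by the expansion of $1/W$ and the complex reorganization of the quadratic term into $-\Im\bigl(2(f_z)^2\chi\,dz\bigr)$. The only cosmetic difference is the justification that the zeroth-order term $\int_\gamma g(\chi,\nu_g)\,ds_g$ vanishes: you use that Killing fields are divergence-free so the $1$-form is closed, while the paper sets $f\equiv 0$ and invokes homology invariance of the flux; both are adequate.
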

\noindent Proof:
Let $(N,g)$ be the Riemannian manifold $\Omega\times\R$ equipped with
the product metric $g=\lambda^2 |dz|^2+dt^2$.
Let $M$ be the graph of $f$, parametrized by
$$\psi(x,y)=(x,y,f(x,y)).$$
The unit normal vector to $M$ is
$$n=\frac{1}{W}\left(-\lambda^{-2}f_x,-\lambda^{-2}f_y,1\right).$$
Assume that $\gamma$ is given by some parametrization $t\mapsto\gamma(t)$, fix some
time $t$ and let $(X,Y)=\gamma'(t)$. Then
$$d\psi(\gamma')=(X,Y,X f_x+Y f_y)$$
is tangent to $\psi(\gamma)$ and its norm is $ds$, the line element on $M$.
We need to compute the conormal vector in $N$.
The linear map $\varphi:(T_p N,g)\to (\R^3,\mbox{euclidean})$ defined by
$$\varphi(u_1,u_2,u_3)=(\lambda u_1,\lambda u_2,u_3)$$
is an isometry. Let $u=(u_1,u_2,u_3)$ and $v=(v_1,v_2,v_3)$ be two orthogonal vectors
in $T_p N$. Let
$$w=\varphi^{-1}(\varphi(u)\wedge\varphi(v))=\left(\begin{array}{l}
u_2 v_3-u_3 v_2\\u_3 v_1-u_1 v_3\\ \lambda^2 (u_1 v_2 - u_2 v_1)\end{array}
\right).$$
Then $(u,v,w)$ is a direct orthogonal basis of $T_p N$ and
$||w||=||u||\;||v||$.
We use this with $u=d\psi(\gamma')$, $v=n$. Then $w=\mu \,ds$, where $\mu$
is the conormal to $\psi(\gamma')$. This gives
$$\mu\, ds=\frac{1}{W}\left(\begin{array}{l}
Y+\lambda^{-2}f_y(X f_x+Y f_y)\\
-X-\lambda^{-2} f_x(X f_x+Y f_y)\\
-f_y X+f_x Y\end{array}\right).$$
For the vertical unit vector $\xi=(0,0,1)$, this gives
$$\mbox{Flux}_\xi(\widetilde{\gamma})=
\int_{\gamma} \frac{-f_y  dx+ f_x dy}{W}=
\int_{\gamma} \frac{\langle \nabla f,\nu\rangle}{W}.$$
The second formula of point (1) follows from $W=1+O(||\nabla f||^2)$ and
$$\Im (2 f_z dz)=\Im \left((f_x -i f_y)(dx+idy)\right)=f_x dy -f_y dx.$$
To prove point (2), let $\chi$ be a horizontal Killing field, seen as a complex number. Then
$$\langle\chi,\mu ds\rangle_g
=\lambda^2\Re\left(
\frac{\chi}{W}(Y+iX+\lambda^{-2}(f_y+if_x)(X f_x+Y f_y)\right)$$
Hence
$$\mbox{Flux}_{\chi}(\widetilde{\gamma})=\Re\int_{\gamma}
\frac{\lambda^2\chi}{W}(dy+i\,dx)
+\frac{\chi}{W}(f_y+if_x)(f_x dx+f_y dy).$$
We then expand $1/W$ as a series
$$\frac{1}{W}=1-\frac{1}{2}\lambda^{-2}(f_x^2+f_y^2)+O(|\nabla f|^4).$$
This gives after some simplifications
$$\mbox{Flux}_{\chi}(\widetilde{\gamma})=\Re\int_{\gamma}
\lambda^2\chi(dy+i\,dx)
+\Re\int_{\gamma} \frac{i}{2}\chi(f_x-i f_y)^2 (dx+i \,dy)+O(|\nabla f|^4).$$
The second term is what we want. The first term, which does not depend on $f$,
vanishes. Indeed, if $f\equiv 0$ then $M$ is $\Omega\times\{0\}$ and the flux we are computing is zero (by homology invariance of the flux, say). 
\cqfd
\subsection{Height estimate}
\label{appendix-height}
The following proposition tells us that a minimal graph with small vertical flux cannot climb very high. It is the key to estimate from below the size of the catenoidal necks.
\begin{proposition}
\label{proposition-height}
Let $\Omega\subset\C$ be a domain that consists of a (topological) disk $D$ minus $n\geq 1$
topological disks $D_1,\cdots,D_n$ contained in $D$.
We denote by $\Gamma$ the boundary of $D$ and by
$\gamma_i$ the boundary of $D_i$. Assume that $D_1$ contains $D(0,r_1)$
and $D$ is contained in $D(0,r_2)$, for some numbers $0<r_1<r_2$.
(Here $r_1$, $r_2$ are euclidean lengths).
(See Figure \ref{figure-height-estimate}).

Assume that $\Omega$ is equipped with a conformal metric $g=\lambda^2 |dz|^2$.
Let $f:\Omega\to\R$ be a solution of the minimal surface equation \eqref{mse}.
Assume that
\begin{enumerate}
\item $f\equiv 0$ on $\Gamma$.
\item $f\equiv -h<0$ is constant on $\gamma_1$.
\item $f$ is constant on $\gamma_i$ for $2\leq i\leq n$, with
$-2h\leq f\leq 0$.
\item $\partial f/\partial \nu\leq 0$ on $\gamma_i$ for $1\leq i\leq n$.
\item $||\nabla_g f||_g\leq 1$ in $\Omega$
\end{enumerate}
Let $\phi$ be the vertical flux on $\Gamma$:
$$\phi=\int_{\Gamma} \frac{\langle\nabla f,\nu\rangle}{W}>0$$
Then
$$h\leq\frac{\sqrt{2}}{\pi}\phi\log\frac{r_2}{r_1}.$$
\end{proposition}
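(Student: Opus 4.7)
The plan is to combine two separate ingredients: an \emph{energy estimate} $\int_\Omega |\nabla f|^2/W\,dx\,dy \leq 2h\phi$ coming from integration by parts with the test function $f$, and a \emph{radial Poincar\'e inequality} $2\pi h^2 \leq \log(r_2/r_1)\int_\Omega f_r^2\,dx\,dy$ coming from Cauchy--Schwarz along rays. The two will be joined by the pointwise comparison $f_r^2\leq |\nabla f|^2 \leq \sqrt 2\,|\nabla f|^2/W$, which holds because $\|\nabla_g f\|_g\leq 1$ forces $W = \sqrt{1+\|\nabla_g f\|_g^2}\leq \sqrt 2$ and hence $1/W\geq 1/\sqrt 2$.

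For the energy estimate I would first rewrite \eqref{mse} in Euclidean form: since $\div_g Y=\lambda^{-2}\div(\lambda^2 Y)$ and $\nabla_g f/W = \lambda^{-2}\nabla f/W$, the minimal surface equation is equivalent to $\div(\nabla f/W)=0$. Expanding $\div(f\,\nabla f/W)=|\nabla f|^2/W$ and applying the Euclidean divergence theorem on $\Omega$ gives
$$\int_\Omega \frac{|\nabla f|^2}{W}\,dx\,dy = \int_{\partial\Omega} f\,\frac{\partial f/\partial\nu}{W}\,ds.$$
The boundary term on $\Gamma$ is zero by hypothesis~(1). On each $\gamma_i$ the function $f$ is the constant $f_i$ with $f_1=-h$ and $-2h\leq f_i\leq 0$ for $i\geq 2$. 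Setting $a_i := -\int_{\gamma_i}(\partial f/\partial\nu)/W\,ds$, hypothesis~(4) gives $a_i\geq 0$, and applying the divergence theorem to $\nabla f/W$ alone yields $\phi = \sum_i a_i$. Consequently
$$\int_\Omega |\nabla f|^2/W\,dx\,dy = \sum_i (-f_i)\,a_i \;\leq\; 2h\sum_i a_i = 2h\phi.$$

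For the Poincar\'e estimate the trick is to extend $f$ continuously to the full Euclidean annulus $A:=\{r_1<|z|<r_2\}$ by setting $f\equiv 0$ on $A\setminus\overline D$, $f\equiv -h$ on $A\cap D_1$, and $f\equiv f_i$ on $A\cap D_i$ for $i\geq 2$. The hypotheses $D\subset D(0,r_2)$ and $D_1\supset D(0,r_1)$ ensure that $A$ decomposes into $\Omega$ and these complementary pieces, and continuity of the extension holds because $f|_{\gamma_i}\equiv f_i$ already. The extended $f$ is locally Lipschitz with $f_r\equiv 0$ off $\Omega$, and along each ray $\theta=\mathrm{const}$ in $A$ one has $\int_{r_1}^{r_2} f_r\,dr = 0-(-h) = h$. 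Cauchy--Schwarz with weights $r^{1/2}$ and $r^{-1/2}$ then gives $h^2 \leq \log(r_2/r_1)\int_{r_1}^{r_2} r f_r^2\,dr$, and integrating over $\theta\in[0,2\pi]$ while using $f_r\equiv 0$ off $\Omega$ yields
$$2\pi h^2 \leq \log(r_2/r_1)\int_\Omega f_r^2\,dx\,dy.$$

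Putting the two together, $2\pi h^2 \leq \sqrt{2}\log(r_2/r_1)\int_\Omega |\nabla f|^2/W\,dx\,dy \leq 2\sqrt 2\,h\phi\log(r_2/r_1)$; dividing by $h$ (the case $h=0$ is trivial) gives $h\leq \tfrac{\sqrt{2}}{\pi}\phi\log(r_2/r_1)$, as required. The main obstacle I expect is really the extension step: $\Omega$ may have many holes, so the radial segments $\{\theta=\mathrm{const}\}\cap\Omega$ are in general disconnected, and one must be certain that the boundary values of $f$ on all the inner curves (not just $\gamma_1$) propagate consistently across the complementary disks. Extending $f$ by the constant $f_i$ on $A\cap D_i$ both preserves continuity and forces $f_r\equiv 0$ on those components, so the fundamental theorem of calculus $h=\int_{r_1}^{r_2}f_r\,dr$ holds along every ray with the integrand supported in $\Omega$, which is exactly what the Cauchy--Schwarz step needs.
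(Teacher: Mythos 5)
Your argument is correct and is essentially the paper's proof: the same energy identity $\int_\Omega |\nabla f|^2/W \leq 2h\phi$ obtained from the divergence theorem applied to $f\,\nabla f/W$ together with hypotheses (1)--(4), combined with the same length--area (radial Cauchy--Schwarz) estimate over the annulus $r_1<|z|<r_2$, with $W\leq\sqrt2$ supplying the factor $\sqrt2$. The only cosmetic differences are that you do the divergence computation in Euclidean rather than conformal bookkeeping, apply Cauchy--Schwarz ray-by-ray before integrating in $\theta$, and phrase the paper's ``restrict the ray to $\Omega$'' step as a constant extension of $f$ across the holes; all yield the identical constant.
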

(Note that Hypothesis (4) is always satisfied if $f\equiv -h$ on all $\gamma_i$ by the
maximum principle.)
\begin{figure}
\begin{center}
\includegraphics[height=35mm]{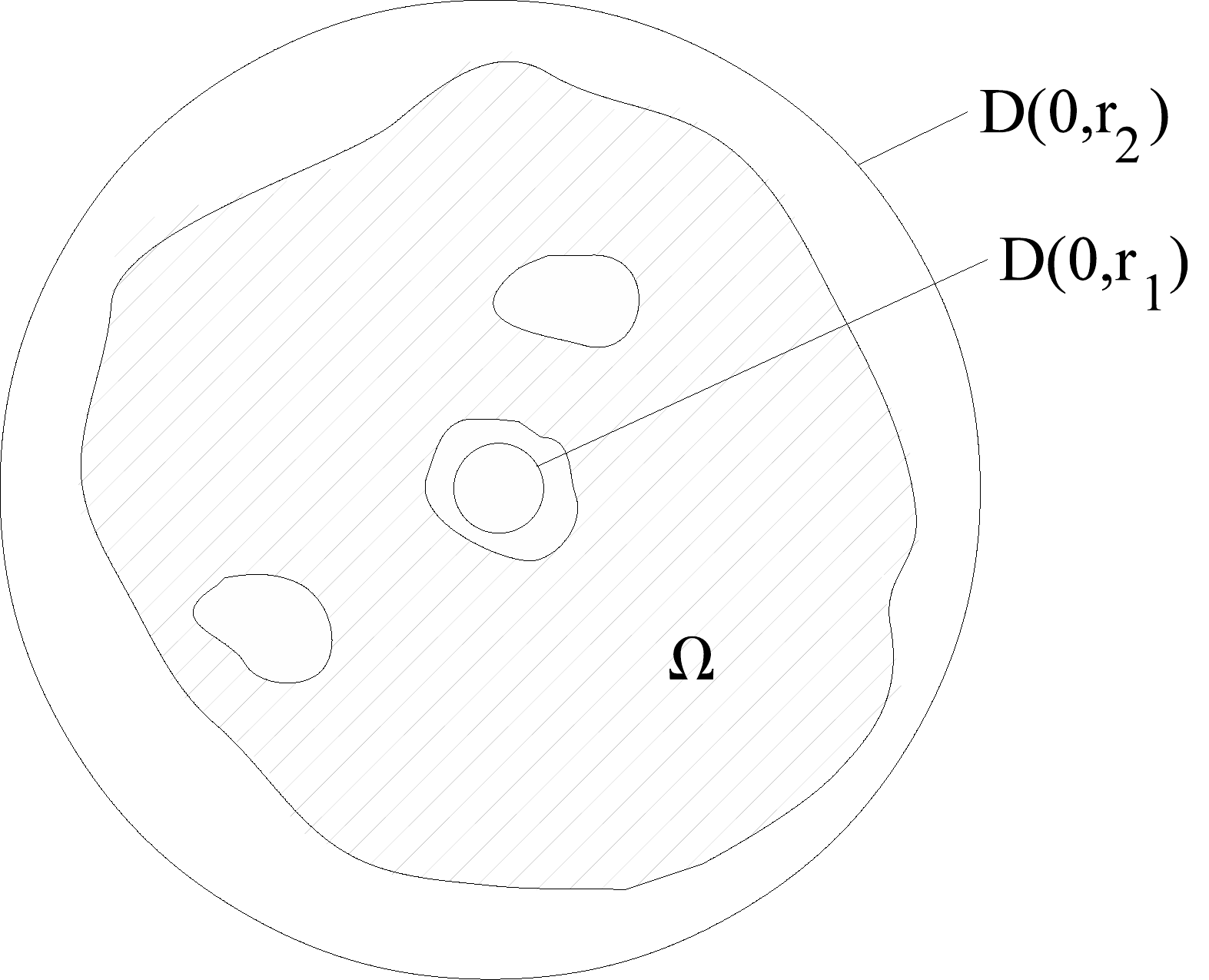}
\end{center}
\caption{}
\label{figure-height-estimate}
\end{figure}
\medskip

\noindent
Proof. Let $A$ be the annulus $D(0,r_2)\setminus D(0,r_1)$.
Write $|df|$ for the euclidean norm of the euclidean gradient of $f$.
Let $\rho$ be the function equal to $|d f|$
 on $\Omega$ and
$0$ on $\C\setminus\Omega$. Then
\begin{eqnarray*}
\iint_{A} \rho^2 dxdy
&=&
\iint_{\Omega} ||\nabla_g f||_g^2 d\mu_{g}\quad\mbox{ by conformal invariance of the energy}\\
&\leq &\sqrt{2}\iint_{\Omega} \langle\frac{\nabla_g f}{W},\nabla_g f\rangle_g d\mu_g
\quad\mbox{ because $W\leq \sqrt{2}$ by point (5)}\\
&=&\sqrt{2}\iint_{\Omega}\div_g(f\frac{\nabla_g f}{W})d\mu_g
\quad\mbox{ by the minimal surface equation \eqref{mse}}\\
&=&\sqrt{2}\int_{\partial \Omega} \frac{f}{W}\langle\nabla_g f,\nu_g\rangle_g ds_g
\quad\mbox{ by the divergence theorem}\\
&=&\sqrt{2}\int_{\partial \Omega} \frac{f}{W}\langle\nabla f,\nu\rangle
\quad\mbox{ where now all quantities are euclidean}\\
&=&\sqrt{2}\sum_{i=1}^n \int_{\gamma_i}\frac{f}{W}\langle\nabla f,\nu\rangle
\quad\mbox{ by point (1)}\\
&\leq& -2\sqrt{2}h \sum_{i=1}^n\int_{\gamma_i}\frac{\langle\nabla f,\nu\rangle}{W}
\quad\mbox{ by points (3) and (4)}
\end{eqnarray*}
Hence by homology invariance of the flux,
\begin{equation}
\label{eqq1}
\iint_{A} \rho^2 dxdy\leq 2\sqrt{2}h\phi
\end{equation}
Consider the ray from $r_1 e^{i\theta}$ to $r_2 e^{i\theta}$.
The integral of $df$ along this ray, intersected with $\Omega$, is equal to $h$.
(If the ray happens to enter one of the disks $D_i$,
then this is true because $f$ is constant on $\partial D_i$.)
Integrating for $\theta\in[0,2\pi]$ we get
\begin{eqnarray*}
2\pi h&\leq&\int_{r=r_1}^{r_2}\int_{\theta=0}^{2\pi} \rho dr\,d\theta\\
&=&\int_A \frac{\rho}{r} dx\,dy\\
&\leq& \left(\iint_A \rho^2 dx\,dy\right)^{1/2}\left(\iint_A \frac{1}{r^2}dx\,dy\right)^{1/2}
\quad\mbox{ by Cauchy Schwartz}\\
&\leq& \left(2\sqrt{2}h\phi\right)^{1/2} \left( 2\pi \log\frac{r_2}{r_1}\right)^{1/2}
\quad\mbox{ using \eqref{eqq1}}
\end{eqnarray*}
The proposition follows.
\cqfd

\medskip
The next proposition is useful to find circles on which we have a good estimate of $\int |df|$.
\begin{proposition}
\label{proposition-height2}
Under the same hypotheses as Proposition \ref{proposition-height},
consider some point $p\in\Omega$. Given $0<r'_1<r'_2$, there exists
$r\in[r'_1,r'_2]$ such that
$$\int_{C(p,r)\cap\Omega}|df|\leq \sqrt{8}\phi\left(\log\frac{r_2}{r_1}\right)^{1/2}
\left(\log\frac{r'_2}{r'_1}\right)^{-1/2}.$$
\end{proposition}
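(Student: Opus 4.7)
The plan is to combine the energy estimate implicit in the proof of Proposition \ref{proposition-height} with a standard Fubini/Cauchy--Schwarz argument on annuli of radii between $r'_1$ and $r'_2$ around $p$, then extract a good circle by a mean-value (pigeonhole) argument with respect to the measure $dr/r$.

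First I would observe that the proof of Proposition \ref{proposition-height} already gives the uniform energy bound
\[
\iint_{\Omega} |df|^2\,dx\,dy \;=\; \iint_A \rho^2\,dx\,dy \;\leq\; 2\sqrt{2}\,h\,\phi,
\]
by conformal invariance of the Dirichlet energy together with hypotheses (3), (4), (5) (this is the estimate \eqref{eqq1} in the earlier proof, valid because $\Omega\subset D(0,r_2)\setminus D(0,r_1)=A$). Feeding back the height estimate $h\leq (\sqrt{2}/\pi)\phi\log(r_2/r_1)$ already proved in Proposition \ref{proposition-height} yields the bound
\[
\iint_{\Omega} |df|^2\,dx\,dy \;\leq\; \frac{4\phi^2}{\pi}\log\frac{r_2}{r_1},
\]
which depends only on the flux and on the ratio $r_2/r_1$.

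Next, for each $r\in[r'_1,r'_2]$, I would apply Cauchy--Schwarz on the (at most full) circle $C(p,r)\cap\Omega$ to obtain
\[
\Bigl(\int_{C(p,r)\cap\Omega} |df|\,ds\Bigr)^{\!2} \;\leq\; 2\pi r\int_{C(p,r)\cap\Omega}|df|^2\,ds,
\]
and then rearrange as
\[
\frac{1}{r}\Bigl(\int_{C(p,r)\cap\Omega}|df|\,ds\Bigr)^{\!2} \;\leq\; 2\pi\int_{C(p,r)\cap\Omega}|df|^2\,ds.
\]
Integrating this inequality for $r\in[r'_1,r'_2]$ and converting to Cartesian measure via $ds\,dr=dx\,dy$ in polar coordinates centered at $p$, I get
\[
\int_{r'_1}^{r'_2}\frac{1}{r}\Bigl(\int_{C(p,r)\cap\Omega}|df|\,ds\Bigr)^{\!2}dr \;\leq\; 2\pi\iint_{\Omega}|df|^2\,dx\,dy \;\leq\; 8\phi^2\log\frac{r_2}{r_1}.
\]

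Finally, since the measure $dr/r$ has total mass $\log(r'_2/r'_1)$ on $[r'_1,r'_2]$, there must exist some $r\in[r'_1,r'_2]$ at which the integrand $\bigl(\int_{C(p,r)\cap\Omega}|df|\,ds\bigr)^{2}$ does not exceed the mean value $8\phi^2\log(r_2/r_1)/\log(r'_2/r'_1)$. Taking a square root gives exactly the stated bound
\[
\int_{C(p,r)\cap\Omega}|df|\;\leq\;\sqrt{8}\,\phi\,\Bigl(\log\tfrac{r_2}{r_1}\Bigr)^{1/2}\Bigl(\log\tfrac{r'_2}{r'_1}\Bigr)^{-1/2}.
\]
There is no real obstacle here beyond bookkeeping; the only subtlety is checking that $\Omega\subset A$ so that the earlier energy estimate applies verbatim, and keeping track of the constants so that the factor $\sqrt{8}$ comes out correctly (it is the product $\sqrt{2\pi}\cdot\sqrt{4/\pi}$ coming from the Cauchy--Schwarz step and the energy bound).
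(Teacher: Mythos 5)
Your proposal is correct and follows essentially the same route as the paper: both rest on the energy bound \eqref{eqq1} combined with the height estimate of Proposition \ref{proposition-height}, a Cauchy--Schwarz step, and a mean-value argument with respect to the measure $dr/r$ on $[r'_1,r'_2]$. The only (immaterial) difference is that you apply Cauchy--Schwarz circle-by-circle before averaging, while the paper averages $F(r)/r$ first and applies Cauchy--Schwarz to the resulting double integral; the constants come out identically.
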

Proof: Consider the function
$$F(r)=\int_{C(p,r)\cap\Omega}|df|=\int_{\theta=0}^{2\pi} \rho(p+re^{i\theta})r d\theta.$$
Then
\begin{eqnarray*}
\lefteqn{\min_{r'_1\leq r\leq r'_2}F(r)\log\frac{r'_2}{r'_1}}\\
&\leq&
\int_{r=r'_1}^{r'_2} \frac{F(r)}{r} dr\\
&=&
\int_{r=r'_1}^{r'_2}\int_{\theta=0}^{2\pi} \frac{\rho(p+re^{i\theta})}{r} rdrd\theta\\
&\leq&
\left(\int_{r'_1}^{r'_2}\int_{0}^{2\pi} \rho(p+re^{i\theta})^2 rdrd\theta\right)^{1/2}
\left(\int_{r'_1}^{r'_2}\int_{0}^{2\pi} \frac{1}{r^2} rdrd\theta\right)^{1/2}\\
&\leq&\left(\int_A\rho^2 dx dy\right)^{1/2}\left(2\pi\log\frac{r'_2}{r'_1}\right)^{1/2}\\
&\leq&\left(8\phi^2\log\frac{r_2}{r_1}\log\frac{r'_2}{r'_1}\right)^{1/2}
\quad\mbox{ using \eqref{eqq1} and Proposition \ref{proposition-height}}
\end{eqnarray*}
The proposition follows.\cqfd
\subsection{A Laurent-type formula for $C^1$ functions}
\label{appendix-laurent}
 \begin{proposition}
 \label{proposition-laurent}
 Let $\Omega\subset\C$ be a domain of the form
 $$\Omega=D(0,R)\setminus \bigcup_{i=1}^n \overline{D}(p_i,r_i).$$
 Here we assume that the closed disks $\overline{D}(p_i,r_i)$ are disjoint and are included
 in $D(0,R)$.
 Let $f$ be a $C^1$ function on $\overline{\Omega}$. Then in $\Omega$,
 $$f(z)=f^+(z) + \sum_{i=1}^n f^-_i(z)+\frac{1}{2\pi i}\int_{\Omega}
 \frac{f_{\overline{z}}(w)}{w-z} dw\wedge\overline{dw}$$
 where
 $f^+$ is holomorphic in $D(0,R)$ and each $f^-_i$ is holomorphic
 in $\C\setminus \overline{D}(p_i,r_i)$.
 Moreover, these functions have the following series expansion
 $$f^+(z)=\sum_{k=0}^{\infty} a_k z^k\qquad \mbox{ with }
 a_k=\frac{1}{2\pi i}\int_{C(0,R)}\frac{f(z)}{z^{k+1}}dz$$
 $$f^-_i(z)=\sum_{k=1}^{\infty} \frac{a_{i,k}}{(z-p_i)^k}
 \qquad \mbox{ with } a_{i,k}=\frac{1}{2\pi i}\int_{C(p_i,r_i)}f(z)(z-p_i)^{k-1}dz$$
 The series converge uniformly in compact subsets of $\Omega$.
 \end{proposition}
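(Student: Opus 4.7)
The plan is to obtain this as a direct application of the Cauchy--Pompeiu (generalized Cauchy) formula, together with standard geometric-series expansions of the Cauchy kernel. Recall that for a $C^1$ function $f$ on the closure of a bounded domain $U\subset\C$ with piecewise smooth boundary, one has
\begin{equation*}
f(z)=\frac{1}{2\pi i}\int_{\partial U}\frac{f(w)}{w-z}\,dw+\frac{1}{8\pi i}\int_{U}\frac{\Delta f(w)}{w-z}\,dw\wedge d\overline{w},\qquad z\in U,
\end{equation*}
or, equivalently, with $f_{\overline z}$ in place of $\tfrac14\Delta f$ and the factor $\tfrac{1}{2\pi i}$ on the area integral. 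This is exactly the identity already cited (equation labelled \eqref{eq-pompeieu} in the text) and used in Sections~\ref{section44}--\ref{section45}.

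First I apply this identity on $\Omega$. The boundary $\partial\Omega$, oriented so that $\Omega$ lies on the left, decomposes as the counterclockwise circle $C(0,R)$ together with the clockwise circles $-C(p_i,r_i)$. Splitting the boundary integral accordingly, I define
\begin{equation*}
f^+(z)=\frac{1}{2\pi i}\int_{C(0,R)}\frac{f(w)}{w-z}\,dw,\qquad f^-_i(z)=-\frac{1}{2\pi i}\int_{C(p_i,r_i)}\frac{f(w)}{w-z}\,dw,
\end{equation*}
each counterclockwise. Standard differentiation under the integral sign shows that $f^+$ extends to a holomorphic function on all of $D(0,R)$ and $f^-_i$ to a holomorphic function on $\C\setminus\overline{D}(p_i,r_i)$; moreover $f^-_i(z)\to 0$ as $z\to\infty$ since $1/(w-z)\to 0$ uniformly for $w$ on the small circle.

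Next I obtain the series expansions. For $|z|<R$, expanding the Cauchy kernel as
\begin{equation*}
\frac{1}{w-z}=\sum_{k=0}^{\infty}\frac{z^k}{w^{k+1}}\qquad(|z|<|w|=R)
\end{equation*}
and interchanging sum and integral (justified by uniform convergence on $|w|=R$) yields $f^+(z)=\sum_{k\ge 0}a_k z^k$ with the stated formula for $a_k$. For $|z-p_i|>r_i$, the analogous expansion
\begin{equation*}
\frac{1}{w-z}=-\sum_{k=1}^{\infty}\frac{(w-p_i)^{k-1}}{(z-p_i)^k}\qquad(|z-p_i|>|w-p_i|=r_i)
\end{equation*}
gives $f^-_i(z)=\sum_{k\ge 1}a_{i,k}(z-p_i)^{-k}$ with the stated formula for $a_{i,k}$. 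In both cases the series converge uniformly on compact subsets of the respective domains of holomorphy, hence uniformly on compact subsets of $\Omega$. Substituting the three pieces $f^+$, $\sum f^-_i$, and the area integral back into the Cauchy--Pompeiu identity yields the claimed decomposition.

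There is no real obstacle here; the proposition is a textbook consequence of Cauchy--Pompeiu plus geometric series, and the only thing that requires minor care is tracking the orientation of the inner circles (which produces the sign making $f^-_i$ vanish at infinity). I would simply present the argument in the order above, without any estimates beyond the elementary uniform convergence of the geometric expansions on compact sets bounded away from $C(0,R)$ and from each $C(p_i,r_i)$.
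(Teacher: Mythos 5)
Your proof is correct and follows essentially the same route as the paper's: apply the Cauchy--Pompeiu formula on $\Omega$, split the boundary integral into the outer circle and the (oppositely oriented) inner circles to define $f^+$ and the $f^-_i$, and expand each via the standard geometric-series argument from the classical Laurent theorem. No issues.
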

 
 \begin{remark}
 This is the same as the Laurent series theorem except that there is a correction term which vanishes when $f$ is holomorphic. The integration circles in the formula
for $a_n$ and $a_{i,n}$ cannot be changed (as in the classical Laurent series theorem) since $f$ is not holomorphic.
\end{remark}
 Proof. By Cauchy Pompeieu integral formula for $C^1$ functions:
\begin{equation}
\label{eq-pompeieu}
 f(z)=\frac{1}{2\pi i}\int_{\partial \Omega} \frac{f(w)}{w-z}dw+\frac{1}{2\pi i}\int_{\Omega}\frac{f_{\overline{z}}(w)}{w-z}dw\wedge \overline{dw}.
\end{equation}
Define
$$f^+(z)=\frac{1}{2\pi i}\int_{C(0,R)}\frac{f(w)}{w-z}dw$$
$$f_i^-(z)=-\frac{1}{2\pi i}\int_{C(p_i,r_i)}\frac{f(w)}{w-z}dw$$
The function $f^+$ is holomorphic in $D(0,R)$. The function $f_i^-$ is holomorphic in
$\C\setminus D(p_i,r_i)$ and extends at $\infty$ with $f_i^-(\infty)=0$. These two functions are expanded in power series exactly as
in the proof of the classical theorem on Laurent series (see e.g. Conway \cite{conway} page 107).
\cqfd

\begin{proposition}
\label{proposition-real-residue}
Let $\Omega\subset\C$ be a domain as in Proposition \ref{proposition-laurent}.
Let $u:\Omega\to\R$ be a real-valued function of class $C^2$. Take
$f=\partial u/\partial z$. Then the coefficients $a_{i,1}$ which appear in
the conclusion of Proposition \ref{proposition-laurent}
are real.
\end{proposition}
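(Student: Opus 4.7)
The plan is to compute $a_{i,1}$ directly from its definition and show by a differential-form manipulation that the contour integral is purely imaginary, so that after dividing by $2\pi i$ the result is real.

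First, I would write $f = u_z = \tfrac{1}{2}(u_x - i u_y)$ and expand
\[
2 f\, dz \;=\; (u_x - i u_y)(dx + i\, dy) \;=\; (u_x\, dx + u_y\, dy) + i\,(u_x\, dy - u_y\, dx) \;=\; du + i\,{}^{*}du,
\]
where ${}^{*}du := u_x\, dy - u_y\, dx$ is the conjugate differential, a real-valued $1$-form on $\Omega$ because $u$ is real-valued. Both $du$ and ${}^{*}du$ are smooth $1$-forms on $\Omega$ (we only need $u \in C^2$ for the latter to be $C^1$).

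Next, I would integrate around $C(p_i,r_i)$. Since $u$ is a single-valued $C^2$ function on $\overline{\Omega}$, its differential $du$ is exact, so
\[
\int_{C(p_i,r_i)} du \;=\; 0.
\]
For the remaining piece, I would note that ${}^{*}du$ is a real $1$-form, hence $\int_{C(p_i,r_i)} {}^{*}du \in \R$. Combining,
\[
\int_{C(p_i,r_i)} 2 f\, dz \;=\; i \int_{C(p_i,r_i)} {}^{*}du \;\in\; i\R.
\]
Therefore
\[
a_{i,1} \;=\; \frac{1}{2\pi i}\int_{C(p_i,r_i)} f(z)\, dz \;=\; \frac{1}{4\pi} \int_{C(p_i,r_i)} {}^{*}du \;\in\; \R,
\]
which is the claim.

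There is no real obstacle here; the only thing to watch is that $u$ must be genuinely single-valued on $\Omega$ (so that $\int du = 0$ on the closed loop $C(p_i,r_i)$) and real-valued (so that ${}^{*}du$ is a real form), both of which are explicit hypotheses. Geometrically, ${}^{*}du$ restricted to $C(p_i,r_i)$ equals $(\partial u/\partial \nu)\, ds$ up to sign, so $a_{i,1}$ is (a constant multiple of) the flux of $\nabla u$ across $C(p_i,r_i)$, which is manifestly real. This is exactly what is needed in Section \ref{section23} and Section \ref{section46} to kill the residue term in \eqref{equation-aj1}.
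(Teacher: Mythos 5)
Your proof is correct and is essentially the same argument as the paper's: the paper computes $\Im a_{i,1}=\frac{-1}{4\pi}\int_{C(p_i,r_i)}\bigl(u_z\,dz+u_{\overline z}\,d\overline z\bigr)=\frac{-1}{4\pi}\int_{C(p_i,r_i)}du=0$, using exactly the two facts you isolate ($u$ real-valued and single-valued), while you phrase the same decomposition as $2u_z\,dz=du+i\,{}^{*}du$. The flux interpretation at the end is a nice sanity check but adds nothing beyond the paper's reasoning.
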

\begin{proof}
\begin{align*}
\Im a_{i,1}
&=\frac{-1}{2\pi}\Re\int_{C(p_i,r_i)} u_z dz\\
&=\frac{-1}{4\pi}\int_{C(p_i,r_i)} u_z dz+u_{\overline{z}}d\overline{z}\quad
\mbox{ because $u$ is real valued}\\
&=\frac{-1}{4\pi}\int_{C(p_i,r_i)}du=0\quad
\mbox{ because $u$ is well defined in $\Omega$}
\end{align*}
\end{proof}
\subsection{Residue computation}
\label{appendix-residue}
\begin{proposition}
\label{proposition-residue}
 $$
  \Res_p (\log z - \log p)^{-1}=p,\qquad
 \Res_p(\frac{1-z^2}{4z^2})(\log z-\log p)^{-2} = -\frac{1+p^2}{4p}
.$$
\end{proposition}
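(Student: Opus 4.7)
The plan is a direct Laurent-series calculation in the local variable $w = z - p$. First expand $\log z - \log p = \log(1 + w/p)$ as a Taylor series, invert to obtain the principal part of $(\log z - \log p)^{-k}$ at $z = p$, then multiply by the Taylor expansion of the meromorphic factor and read off the coefficient of $w^{-1}$.

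Concretely, I would write
\[
\log z - \log p = \frac{w}{p}\left(1 - \frac{w}{2p} + \frac{w^2}{3p^2} - \cdots\right),
\]
so that
\[
(\log z - \log p)^{-1} = \frac{p}{w}\left(1 + \frac{w}{2p} + O(w^2)\right) = \frac{p}{w} + \frac{1}{2} + O(w),
\]
which immediately gives $\Res_p (\log z - \log p)^{-1} = p$. Squaring the series $(1 - w/(2p) + O(w^2))^{-2} = 1 + w/p + O(w^2)$ gives
\[
(\log z - \log p)^{-2} = \frac{p^2}{w^2} + \frac{p}{w} + O(1).
\]

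For the second residue I then expand the regular factor at $z = p$: set $\varphi(z) = (1 - z^2)/(4z^2) = 1/(4z^2) - 1/4$. Then $\varphi(p) = (1-p^2)/(4p^2)$ and $\varphi'(p) = -1/(2p^3)$, so $\varphi(z) = \varphi(p) + \varphi'(p)\,w + O(w^2)$. The residue is the coefficient of $w^{-1}$ in the product, which picks up two terms:
\[
\Res_p \varphi(z)(\log z - \log p)^{-2} = \varphi(p)\cdot p + \varphi'(p)\cdot p^2 = \frac{1-p^2}{4p} - \frac{1}{2p} = -\frac{1+p^2}{4p}.
\]

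There is no real obstacle; the only thing to be careful about is keeping enough terms in both Taylor expansions (through order $w$ in each factor, since the pole has order two) so that no contribution to the $w^{-1}$ coefficient is missed. Once those two Taylor coefficients of $\varphi$ and those two principal-part coefficients of $(\log z - \log p)^{-2}$ are in hand, the conclusion is a one-line arithmetic.
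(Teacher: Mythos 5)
Your proposal is correct and is essentially identical to the paper's own proof: both expand $\log(1+w/p)$ in powers of $w=z-p$, invert to get the principal parts $\frac{p}{w}+\frac12+O(w)$ and $\frac{p^2}{w^2}+\frac{p}{w}+O(1)$, and then combine with the Taylor expansion of $(1-z^2)/(4z^2)$ at $p$ to read off the residue $f(p)p+f'(p)p^2=-\frac{1+p^2}{4p}$. Nothing to add.
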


\begin{proof}
$$\log z - \log p=\log \left (1+\frac{z-p}{p}\right)
=\frac{z-p}{p}-\frac{1}{2}\left(\frac{z-p}{p}\right)^2+O(z-p)^3$$
The first residue follows. Then
$$
(\log z-\log p)^{-2}=\left(\frac{z-p}{p}\right)^{-2}\left(1-\frac{1}{2}\left(\frac{z-p}{p}\right)
\right)^{-2}
=\frac{p^2}{(z-p)^2} + \frac{p}{z-p} + O(1).
$$
Let 
$$f(z)=\frac{1-z^2}{4z^2}= \frac1{4z^2} - \frac14.$$
Then
\begin{align*}
\Res_p\left(\frac{1-z^2}{4z^2}(\log z-\log p)^{-2}\right)
&=
\Res_p\left(\frac{f(z)p^2}{(z-p)^2}\right) + \Res_p\left( \frac{f(z)p}{(z-p)} \right)
\\
&=
f'(p)p^2 + f(p)p  \qquad(\text{by the Taylor expansion for $f$ at $p$})
\\
&=
-\frac1{2p^3}p^2 + \frac{1-p^2}{4p}
\\
&=
-\frac{1+p^2}{4p}.
\end{align*}
\end{proof}

\end{document}